\newcommand{\one}{\mathbbm{1}}
\newcommand{\bDelta}{\boldsymbol{\Delta}}
\renewcommand{\vec}{\mathbf}
\newcommand{\lvertiii}{\left\vert\kern-0.25ex\left\vert\kern-0.25ex\left\vert}
\newcommand{\rvertiii}{\right\vert\kern-0.25ex\right\vert\kern-0.25ex\right\vert}
\newtheorem{theorem}{Theorem}[section]
\newtheorem{lemma}[theorem]{Lemma}
\newtheorem{proposition}[theorem]{Proposition}
\newtheorem{corollary}[theorem]{Corollary}
\newtheorem{conjecture}{Conjecture}
\theoremstyle{definition}
\newtheorem{definition}[theorem]{Definition}
\theoremstyle{remark}
\newtheorem{remark}[theorem]{Remark}
\let\Im=\undefined\DeclareMathOperator{\Im}{Im}
\DeclareMathOperator{\id}{Id}
\newcommand{\D}{\mathrm{d}}
\title{The ultraspherical rectangular collocation method and its convergence}
\author{Thomas Trogdon}
\address{Department of Applied Mathematics, University of Washington, WA}
\email{trogdon@uw.edu}
\urladdr{http://faculty.washington.edu/trogdon/}
\begin{document}

\begin{abstract}
   We develop the ultraspherical rectangular collocation (URC) method, a collocation implementation of the sparse ultraspherical method of Olver \& Townsend for two-point boundary-value problems.  The URC method is provably convergent, the implementation is simple and efficient, the convergence proof motivates a preconditioner for iterative methods, and the modification of collocation nodes is straightforward.  The convergence theorem applies to all boundary-value problems when the coefficient functions are sufficiently smooth and when the roots of certain ultraspherical polynomials are used as collocation nodes.  We also adapt a theorem of Krasnolsel'skii et al.~to our setting to prove convergence for the rectangular collocation method of Driscoll \& Hale for a restricted class of boundary conditions.
\end{abstract}

\maketitle

\section{Introduction}
We consider the numerical solution of boundary-value problems on $\mathbb I := [-1,1]$ of the form
\begin{align}\label{eq:bvp}
\begin{split}
    \sum_{j=0}^{k} a_j(x) \frac{\D^j u}{\D x^j} (x) &= f(x), \quad x \in (-1,1),\\
    \vec S \begin{bmatrix}
        u(-1) \\
        u'(-1) \\
        \vdots \\
        u^{(k-1)}(-1)
    \end{bmatrix} + \vec T\begin{bmatrix}
        u(1) \\
        u'(1) \\
        \vdots \\
        u^{(k-1)}(1)
    \end{bmatrix} &= \vec b, \quad \vec S, \vec T \in \mathbb C^{k \times k}, \vec b \in \mathbb C^k.
    \end{split}
\end{align}
We develop an ultraspherical rectangular collocation (URC) method based on the sparse ultraspherical approach of Olver \& Townsend \cite{Olver2013} where the Galerkin projection on the range is simply replaced with collocation.  The approach incorporates the rectangular collocation ideas of Driscoll \& Hale \cite{Driscoll2015} (see also \cite{Aurentz2017}).  The method developed here has the following important features:
\begin{itemize}
    \item \textbf{The method is provably convergent.}  As far as we are aware, no collocation method for discretizing \eqref{eq:bvp} had been shown to converge in general.  In the current work we show that if $a_k = 1$: (1) With and a finite amount of smoothness of the coefficient functions and $f$, when using the roots of  ultraspherical polynomials as collocation nodes the method converges (see Theorem~\ref{t:informal}).  (2) If one uses the Chebyshev first-kind extrema or first-kind zeros as collocation nodes, the boundary conditions satisfy a regularity condition, and the coefficient functions  and $f$ are H\"older continuous, then the method converges at an optimal rate (see Theorem~\ref{t:restate}, a small adapation of \cite[Theorem 15.5]{Krasnoselskii1972}).
    \item \textbf{The implementation is efficient and simple.}  To efficiently implement the rectangular collocation method of Driscoll \& Hale \cite{Driscoll2015} and obtain an $O(N^2)$ complexity to construct an $N \times N$ linear system, one has to take care to iteratively construct differentiation matrices \cite{Xu2016}.  The URC method effectively requires only the use of a three-term recurrence for (normalized) ultraspherical polynomials to construct the differentiation matrices.
    \item \textbf{The method has an obvious preconditioner.} The proof of convergence of the URC method involves a two-sided preconditioning step.  We then show that the preconditioned matrix is close to the right-preconditioned finite-section matrix of Olver \& Townsend, which is, in the limit, of the form $\id + \vec K$ where $\vec K$ is compact.  The right preconditioner is diagonal and the left preconditioner is determined by the eigenvectors of the Jacobi matrix associated to (normalized) ultraspherical polynomials and is therefore reasonably efficient to implement.  For well-conditioned boundary-value problems, after preconditioning, we find an empirical $O(N^2)$ complexity to solve an $N \times N$ discretization of \eqref{eq:bvp} using GMRES \cite{GMRES-original}.
    \item \textbf{The discretization acts from coefficient space to value space.}  Historically, spectral collocation methods work by discretizing differentiation operators as mapping function values to function values \cite{TrefethenSpectral,Fornberg1996,Weideman2000,Boyd,TrefethenATAP}.  Here we advocate for a different approach when the solution of linear system associated to the discretization of \eqref{eq:bvp} results in the approximate orthogonal polynomial expansion coefficients of the unknown --- something we view as more useful output than function values. Indeed, for example, when one inputs a function into {\tt Chebfun} \cite{chebfun}, the first task is to compute its Chebyshev coefficients.
    \item \textbf{The choice of collocation nodes is simple to modify.}  The proofs of convergence for the URC method requires the use of zeros of ultraspherical polynomials as collocation nodes (Theorem~\ref{t:informal}) or the first-kind Chebyshev zeros or extrema (Theorem~\ref{t:restate}).  But, the user is free to choose any other choice of nodes with a simple modification of the method.  In our numerical experiments, we find that the using the roots of any ultraspherical polynomial produces comparable results to the zeros of the first-kind Chebyshev polynomials.  And the use of the extrema of Chebyshev first-kind polynomials produces slightly degraded results.
\end{itemize}

It is important to note that the method presented here does not match the complexity of Olver \& Townsend \cite{Olver2013} which achieves and $O(mN)$ complexity to solve \eqref{eq:bvp} when the coefficient functions are themselves polynomials of degree less than or equal to $m$.  The advantages of the collocation approach are largely implementational.  The collocation approach avoids the extra step of determining the expansion coefficients of the coefficient functions.  And the most simplistic implementation, avoids the basis conversion (connection coefficient) matrices. Coefficient functions with a finite amount of smoothness (i.e., derivatives at some order do not exist) are easier to handle with collocation, see Figure~\ref{fig:abs}.

\subsection{Outline of paper, main results and relation to previous work}

  Section~\ref{sec:OPs} is concerned with the absolute basics of the theory of orthogonal polynomials, Gaussian quadrature and its relation to interpolation, and the definition Jacobi polynomials.  Then Section~\ref{sec:Ultra} is concerned with theory specific to the ultraspherical (Gegenbauer) polynomials.   We find it convenient to work with orthonormal polynomials with respect to a normalized weight function (so that the zeroth-order polynomial is 1).  We first develop the differentiation operator, mapping between orthogonal polynomial bases and then define the polynomial evaluation matrices in Sections~\ref{sec:diff} and \ref{sec:eval}, respectively.  Importantly, this is all that is required to complete the derivation of the URC method.

Then in Sections~\ref{sec:connect} and \ref{sec:mult} we develop the matrix representations of basis conversion (connection coefficients) and function multiplication, respectively.  This then allows us to rederive the sparse ultraspherical method of Olver \& Townsend in Section~\ref{sec:usm}.  This full derivation is needed in our proof of convergence of the collocation method.  Then we close this section with useful estimates and properties of ultraspherical polynomials, see Section~\ref{sec:est}.

Our main theoretical developments are in Section~\ref{sec:conv} with many of the proofs deferred to Appendix~\ref{sec:defer}.  The main technical advance in this paper is presented in Section~\ref{sec:comp} which compares the left- and right-preconditioned collocation method with the right-preconditioned finite-section method using ideas from \cite{Trogdon2023}.  Lemma~\ref{l:aliasing} is used to estimate the effect of collocation and, as a result, our estimates only initially apply when the coefficient functions are degree $m$ polynomials and $m = o(N)$. Then Section~\ref{sec:conv-usm} essentially reviews the convergence proof of Olver \& Townsend, including estimates for truncations of polynomial expansions of the coefficient functions.  Section~\ref{sec:stab-coll} includes bounds for perturbations of the coefficient functions for the collocation method.  This allows us to remove the restriction of $m = o(N)$ and gives the main result of this paper in Section~\ref{sec:main}.  Loosely, speaking it states:

\begin{theorem}[Informal]\label{t:informal}
Let $t, \lambda > 0$, assume $a_k = 1$ and suppose that the roots of the $(k +\lambda)$th ultraspherical polynomials are used as collocation nodes. Then there exists $s,q> 0$ such that if $a_j \in C^q(\mathbb I)$, $j = 0,1,\ldots,k-1$, $f \in C^q(\mathbb I)$ and \eqref{eq:bvp} is uniquely solvable, then the difference of the solution of the collocation system and the true solution is $O(N^{-t})$ in $\ell^2_{s+k}$.
\end{theorem}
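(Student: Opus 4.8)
The plan is to reduce the convergence of the URC linear system to the convergence of the right-preconditioned finite-section (Galerkin) method of Olver \& Townsend \cite{Olver2013}, and then to invoke the classical fact that finite sections of an operator of the form $\id + \vec K$ with $\vec K$ compact are eventually invertible with uniformly bounded inverses. Concretely, I would first write the URC discretization, apply the diagonal right preconditioner together with a left preconditioner built from the eigenvectors of the truncated Jacobi matrix of the normalized ultraspherical polynomials, and observe that in this form left multiplication by the preconditioner is exactly the discrete ultraspherical transform associated with the quadrature whose nodes are the collocation nodes. Hence, after two-sided preconditioning, collocating a product $a_j\cdot p$ against the nodes differs from taking the exact ultraspherical inner product only by an aliasing contribution, controlled by Lemma~\ref{l:aliasing}.

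First I would treat the case in which each $a_j$ is a polynomial, initially of fixed degree and then of degree $m=m(N)$ growing with $N$. Since the collocation nodes are the zeros of the $(k+\lambda)$th ultraspherical polynomial, the associated Gaussian-type quadrature is exact to high degree, so Lemma~\ref{l:aliasing} shows the aliasing term is negligible in the weighted $\ell^2_{s+k}$ norm provided $m=o(N)$. This is the content of Section~\ref{sec:comp}: the preconditioned collocation matrix equals $\id+\vec K_N+\vec E_N$, where $\vec K_N$ is the (convergent) finite section of the limiting compact operator and $\|\vec E_N\|\to 0$. A Neumann-series argument then yields invertibility of the collocation system for large $N$, a uniform bound on its inverse, and, combined with the known truncation rate of the Olver \& Townsend method reviewed in Section~\ref{sec:conv-usm}, an $O(N^{-t})$ solution-error bound under the polynomial-coefficient hypothesis.

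Next I would remove the polynomiality assumption. Given $a_j\in C^q(\mathbb I)$ and $f\in C^q(\mathbb I)$, replace each by its degree-$m$ truncated ultraspherical expansion; the truncation error is $O(m^{-q})$ in sup norm by classical approximation theory. Section~\ref{sec:stab-coll} would then show that perturbing the coefficient functions by this amount perturbs the preconditioned collocation operator by a comparably small amount in the $\ell^2_{s+k}$ operator norm, using the multiplication-operator bounds of Section~\ref{sec:mult} and the smoothing afforded by the right preconditioner. Choosing $m=m(N)$ to grow like a fixed fractional power of $N$ makes the aliasing term small (since $m=o(N)$) and the coefficient-truncation error small (since $m\to\infty$); balancing these against the powers of $N$ lost in passing between $\ell^2$ and $\ell^2_{s+k}$ fixes the admissible $s$, $q$ and the resulting rate $t$. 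A triangle inequality chaining the true solution, the finite-section solution, the polynomial-coefficient collocation solution, and the true-coefficient collocation solution then gives Theorem~\ref{t:informal}.

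The main obstacle I anticipate is Section~\ref{sec:comp}: controlling the aliasing operator $\vec E_N$ in the \emph{weighted} norm $\ell^2_{s+k}$, uniformly in $N$, while preserving the $\id+\text{compact}$ structure through the two-sided preconditioning. The left preconditioner is not diagonal, so one must track how it interacts with the banded differentiation and multiplication operators and with the weights; this is precisely where the estimates of Section~\ref{sec:est} and the techniques of \cite{Trogdon2023} enter. A secondary difficulty is the careful bookkeeping of the powers of $N$ incurred when moving between $\ell^2$ and $\ell^2_{s+k}$, so that the final balance among $m$, $q$, $s$, and $t$ closes consistently.
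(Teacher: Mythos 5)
Your proposal matches the paper's actual proof strategy essentially step for step: two-sided preconditioning with the discrete ultraspherical transform $\vec F_{N-k}(\mu_{\lambda+k})$ on the left and the diagonal $\vec Z_N^{-1}\bDelta_N^{(-s)}$ on the right, control of aliasing via Lemma~\ref{l:aliasing}, reduction to the $\id+\vec K$ framework of Olver \& Townsend, perturbation of the coefficient functions by degree-$m$ truncations with $m\sim N^{\gamma}$, and a triangle-inequality chain combining the pieces — all exactly as in Corollary~\ref{c:main}, Theorem~\ref{t:inf}, Lemma~\ref{l:ultra_stable}, Proposition~\ref{p:coll_stable}, and the proof of Theorem~\ref{t:MAIN}. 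The one small distinction is that the paper's chain separates the finite-section solution with true coefficients from the finite-section solution with truncated coefficients as an explicit intermediate step (controlled by Lemma~\ref{l:ultra_stable}), and uses Chebyshev rather than ultraspherical truncations of the $a_j$, but these are details rather than genuinely different ideas.
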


Then in Section~\ref{sec:demo} we present some numerical experiments. The code  to produce all the plots in this paper can be found here \cite{Trogdon2024}.  Section~\ref{sec:nodes} demonstrates the main theorem and explores the choice of collocation nodes.  Then Section~\ref{sec:precond} demonstrates that the proof of Theorem~\ref{t:main} is useful in educating preconditioners.  We finish the main text with some open questions in Section~\ref{sec:open}.

Appendix~\ref{sec:Kras} contains a modification of \cite[Theorem 15.5]{Krasnoselskii1972}, see Theorem~\ref{t:Kras}, which essentially states the following.

\begin{theorem}[Informal]\label{t:restate}
Assume $a_k = 1$ and suppose that the extrema or roots of the Chebyshev first-kind polynomials are used as collocation points\footnote{Other options are allowed, see Remark~\ref{r:zeros}.}. If $a_j \in C^{0,\alpha}(\mathbb I)$, $j = 0,1,\ldots,k-1$, $f \in C^{0,\alpha}(\mathbb I)$ for some $\alpha > 0$, \eqref{eq:bvp} is uniquely solvable and \eqref{eq:bvp} is uniquely solvable\footnote{This is an assumption on the boundary conditions.} if $a_j \equiv 0$ for $j < k$, then the difference of the solution of the collocation system and the true solution is bounded by the difference of the true solution and its interpolant.
\end{theorem}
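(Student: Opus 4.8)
The plan is to deduce Theorem~\ref{t:restate} from the abstract convergence machinery behind \cite[Theorem 15.5]{Krasnoselskii1972}: I would present the rectangular collocation method of \cite{Driscoll2015} as a concrete instance of the projection scheme underlying that theorem and then verify its two standing hypotheses --- a consistency (approximation) property and a stability (uniform invertibility) property. The asserted conclusion, that the collocation error is controlled by the interpolation error $\|u - \Pi_N u\|$, is exactly the quasi-optimal estimate the abstract theorem produces once those hypotheses hold, so essentially all of the work is in checking them rather than in re-running the abstract argument.

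First I would recast \eqref{eq:bvp} as an operator equation $\mathcal L u = \mathcal F$, where $\mathcal L$ maps a suitable subspace of $C^k(\mathbb I)$ (chosen so that the inhomogeneous boundary data is moved into $\mathcal F$) into $C^{0,\alpha}(\mathbb I) \times \mathbb C^k$, and then split $\mathcal L = \mathcal L_0 + \mathcal L_1$, where $\mathcal L_0$ pairs the top-order derivative $u \mapsto u^{(k)}$ with the boundary functional of \eqref{eq:bvp} (legitimate since $a_k = 1$) and $\mathcal L_1 u = \sum_{j<k} a_j u^{(j)}$ is the lower-order remainder. The hypothesis that \eqref{eq:bvp} is uniquely solvable when $a_j \equiv 0$ for $j < k$ says precisely that $\mathcal L_0$ is boundedly invertible; moreover $\mathcal L_0^{-1}$ is an integral operator that restores $k$ derivatives and $\mathcal L_1$ lands in $C^{0,\alpha}(\mathbb I)$ --- this is where H\"older continuity of the $a_j$ and $f$ is used --- so $\mathcal L_0^{-1}\mathcal L_1$ is compact by Arzel\`a--Ascoli. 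Hence $\id + \mathcal L_0^{-1}\mathcal L_1$ is Fredholm of index zero, and the unique solvability of the full problem \eqref{eq:bvp} upgrades this to invertibility.

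On the discrete side, the trial space is the space of polynomials of the (rectangular) degree prescribed by the method in coefficient form, the operator connecting continuous and discrete data is interpolation $\Pi_N$ at the Chebyshev first-kind zeros or extrema, and the discrete operator $\mathcal L_N$ consists of the collocation rows together with the $k$ exactly imposed boundary rows. The consistency hypothesis reduces to $\Pi_N g \to g$ with a rate for every $g \in C^{0,\alpha}(\mathbb I)$, which follows from Jackson's theorem (best polynomial approximation error $O(N^{-\alpha})$) combined with the $O(\log N)$ growth of the Lebesgue constant of these node families, the product tending to zero. The stability hypothesis is that $\mathcal L_N$ is invertible for all large $N$ with $\|\mathcal L_N^{-1}\|$ bounded uniformly in $N$; I would obtain it by writing $\mathcal L_N = \mathcal L_{0,N}\bigl(\id + \mathcal L_{0,N}^{-1}\mathcal L_{1,N}\bigr)$, establishing uniform invertibility of the discretized dominant operator $\mathcal L_{0,N}$ --- this is where the regularity condition on the boundary conditions is used, and where the rectangular bookkeeping (which degrees feed the trial space versus the residual, and how the boundary rows sit relative to the collocation rows) must be kept compatible --- and then showing that the family $\mathcal L_{0,N}^{-1}\mathcal L_{1,N}$ is collectively compact and converges suitably to $\mathcal L_0^{-1}\mathcal L_1$, so that invertibility of $\id + \mathcal L_0^{-1}\mathcal L_1$ passes to $\id + \mathcal L_{0,N}^{-1}\mathcal L_{1,N}$ by the perturbation lemma for collectively compact approximations.

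With stability available the quasi-optimal bound is routine: $\|p_N - \Pi_N u\| \lesssim \|\mathcal L_N \Pi_N u - \mathcal F_N\|$, and because $u$ solves the continuous equation exactly the right-hand side is the collocation of $\mathcal L(\Pi_N u - u)$, together with a boundary contribution that vanishes when the nodes include the endpoints and is of the same lower order otherwise; this is in turn bounded by $\|u - \Pi_N u\|$ in the relevant norm, again via the Lebesgue-constant and Jackson estimates, and the triangle inequality gives $\|p_N - u\| \lesssim \|u - \Pi_N u\|$. I expect the stability step to be the main obstacle: establishing uniform invertibility of $\mathcal L_{0,N}$ and collective compactness of the lower-order family, carried out precisely enough that the abstract hypotheses of \cite[Theorem 15.5]{Krasnoselskii1972} are genuinely met under only H\"older continuity of the coefficients and $f$ and the stated regularity of the boundary conditions.
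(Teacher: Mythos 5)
Your high-level plan is sound --- precondition by the invertible leading-order operator, show the remainder is compact, invoke projection-method theory, and deduce quasi-optimality --- and it correctly identifies where each hypothesis enters. But the paper's proof of Theorem~\ref{t:Kras} takes a noticeably cleaner route at the one point you flag as the main obstacle. Rather than discretizing $\mathcal L_0$ and $\mathcal L_1$ separately and establishing uniform invertibility of the discrete leading-order operator $\mathcal L_{0,N}$ (which would in effect require a Birkhoff-interpolation-type discrete Green's function), the paper changes the unknown to $\psi = u^{(k)}$ and the trial basis to $\phi_j^{(k)}$, where the $\phi_j$ are the polynomial basis of the nullspace of the boundary matrix $\vec B$. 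Because the \emph{exact} Green's operator $\mathcal G$ of the leading-order problem satisfies $\mathcal G \phi_j^{(k)} = \phi_j$, the collocation system is \emph{literally} interpolation applied to the second-kind equation $(\id + \mathcal K)\psi = f$ with $\mathcal K = \sum_{\ell<k} a_\ell \frac{\mathrm d^\ell}{\mathrm dx^\ell}\mathcal G$ --- the leading-order block is already the identity on the polynomial trial space, and no discrete inversion is ever performed. Stability then comes not from collective compactness but from the simpler criterion $\|(\id - \mathcal I_{N-k}^P)\mathcal K\|_{L^2 \to L^2} \to 0$, established by tracking that $\mathcal K$ maps $L^2$ boundedly into $C^{0,\alpha'}$ with $\alpha' = \min\{\alpha,1/2\}$ (the $1/2$ entering through the Sobolev embedding $H^1 \hookrightarrow C^{0,1/2}$, which you do not account for), after which Jackson's theorem and the Lebesgue-constant bound finish the job. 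The functional setting is also $L^2(\mathbb I)$ for $\psi$, not $C^k(\mathbb I)$ for $u$, which is what makes the interpolation estimates go through comfortably. Your proposal is salvageable, but as written it leaves the hardest step --- uniform invertibility of $\mathcal L_{0,N}$ and collective compactness of $\mathcal L_{0,N}^{-1}\mathcal L_{1,N}$ --- unaddressed, and the exact-Green's-function reformulation that the paper uses is precisely the device that dissolves that step.
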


These two theorems cover many cases for \eqref{eq:bvp} but both have their advantages and shortcomings.  First, Theorem~\ref{t:restate} allows for merely H\"older continuous coefficient functions and gives an optimal rate of convergence but does not allow for all possible boundary conditions.  For example, a second-order problem with Neumann boundary conditions does not fit into the framework.  Conversely, Theorem~\ref{t:informal} applies to any choice of boundary conditions. But the convergence theorem only applies if the coefficient functions are smooth enough.  And, the rate of convergence is not optimal.  It is an interesting, and apparently open, question to find a proof that closes these theoretical gaps.   We emphasize that they are just that, theoretical gaps, as the method, in both cases, performs very well.

\subsubsection{Relation to previous results}

As alluded to, the result of Krasnosel'skii et al.~\cite[Theorem~15.5]{Krasnoselskii1972} is the most general result we are aware of currently in the literature.  Yet, as discussed above, it does not, for example, establish convergence for second-order problems with Neumann boundary conditions, whereas the theorem in this work does not suffer from this gap.  It is possible that this shortcoming has been resolved in the references of \cite{Krasnoselskii1972}, but those references are either currently unavailable or not translated.

Other convergence results, of course, do exist in the literature for collocation methods.  For example, \cite{Zhang2008} considers second-order differential equations and establishes super-geometric rates of convergence.  The text \cite{Shen2011} establishes convergence for collocation methods for second-order differential equations.  Higher-order equations are treated in \cite{Shen2011} but theoretical results for these higher-order problems appear to be focused on Petrov--Galerkin methods. The text \cite{Canuto1988} also produces convergence results, generalizing the approach to multiple spatial dimensions, but focusing largely on elliptic operators. 

When it comes to preconditioning for spectral methods, there is a large literature.  We point to \cite{Lorentz1971, Du2015,Ross2023, Wang2014} for a discussion of so-called Birkhoff interpolation methods.  Such a method would be applicable when Theorem~\ref{t:restate} applies so that the leading-order operator can be inverted and used a preconditioner.  Essentially, the technique mirrors the proof of Theorem~\ref{t:Kras}.  While such an approach may achieve better results, once the implementational complexity gets to this level, its likely one should resort to the full implementation of Olver \& Townsend with the adaptive QR procedure. The preconditioning we propose here is straightforward and universal --- it applies whenever Theorem~\ref{t:informal} applies.

From a purely implementational perspective, we are unaware of other works using the approach of Olver \& Townsend to simplify and unify implementations.

\subsection{Notation}

We end this section with the introduction of some necessary notation.  The sequence spaces $\ell^p_s(\mathbb N)$ are defined by
\begin{align*}
    \ell_s^p(\mathbb N) := \left\{ \vec v = (v_j)_{j=1}^\infty ~|~ \|\vec v\|_{\ell^p_s}^p := \sum_{j=1}^\infty j^{ps} |v_j|^p < \infty \right\}.
\end{align*}
Such spaces for vectors $\vec v = (v_j)_{j=1}^N$, $N < \infty$ with the obvious norm will also be considered.  The domain of $\ell_s^p$ is omitted when it is clear from context.  If $s = 0$, $\ell^p$ is used to refer to these spaces.  Then $\| \diamond \|_{\rm F}$ is used to denote the Frobenius (Hilbert--Schmidt) norm\footnote{The notation $\diamond$ is used to refer to independent arguments of a function, i.e., $f(x) = 1/x$ can be denoted by $1/\diamond$.} and $\|\diamond\|_\infty$ denotes the standard max norm on $\mathbb I$.  And $C^q(\mathbb I)$ will is used to denote the space of complex-valued functions with $q$ continuous derivatives, $C^0(\mathbb I) = C(\mathbb I)$, with norm
\begin{align*}
\|f\|_{C^{q}(\mathbb I)}  = \sum_{\ell = 0}^q \|f^{(\ell)}\|_\infty.
\end{align*}
The notation $C^{q,\alpha}(\mathbb I)$ denotes the space of functions with $q$ continuous derivatives such that the $q$th derivative is $\alpha$-H\"older with exponent $0 < \alpha \leq 1$ with norm
\begin{align*}
    \|f\|_{C^{q,\alpha}(\mathbb I)} = \|f\|_{C^{q}(\mathbb I)} + \sup_{x \neq y} \frac{|f^{(q)}(x) - f^{(q)}(y)|}{|x - y|^{\alpha}}.
\end{align*}
Then $\id$ will be used to denote the identity operator/matrix depending on the context and $\id_n$ is the $n\times n$ identity matrix.

To state the next definition, $H^k(\mathbb I)$ denotes the space of measurable functions such that $f,f',\ldots,f^{(k-1)}$ are absolutely continuous and $f,f',\ldots,f^{(k)}$ are square integrable with the norm
\begin{align*}
    \|f\|_{H^k(\mathbb I)}^2 = \sum_{\ell=0}^k \|f^{(\ell)}\|_{L^2(\mathbb I)}^2.
\end{align*}

\begin{definition}
    We say \eqref{eq:bvp} is uniquely solvable if the only function in $H^k(\mathbb I)$ that solves the boundary-value problem with $\vec b = \vec 0$, $f = 0$, is the trivial solution.
\end{definition}

\section{Orthogonal polynomials}\label{sec:OPs}

In the current work, we work with orthonormal polynomials on $\mathbb I = [-1,1]$ but include some general developments.  Interested readers in the general theory of orthogonal polynomials are referred to \cite{GautschiOP} and \cite{Szego1939}.  For a (Borel) probability measure $\mu$ on $\mathbb R$, define the inner product
\begin{align*}
    \langle f, g \rangle_\mu := \int_{\mathbb R} f(x) \overline{g(x)} \mu(\D x).
\end{align*}
Then $L^2(\mu)$ is used to denote the Hilbert space with this inner product.  Since polynomials are often dense in $L^2(\mu)$ one can perform the Gram-Schmidt process on the monomials $\{1, \diamond, \diamond^2, \ldots \}$ using the inner product to obtain an orthonormal basis for $L^2(\mu)$.  Often, this process is described by first constructing the monic orthogonal basis $(\pi_k(\diamond;\mu))_{k \geq 0}$ satisfying
\begin{itemize}
    \item $\pi_k(x;\mu) = x^k + O(x^{k-1})$, \quad $x \to \infty$, and
    \item $\int \pi_k(x;\mu) \pi_j(k;\mu) \mu(\D x) = 0$ for $j \neq k$.
\end{itemize}
The orthonormal polynomials are defined by
\begin{align*}
    p_k(x;\mu) = \frac{\pi_k(x;\mu)}{\|\pi_k\|_{L^2(\mu)}}, \quad k = 0,1,2,\ldots.
\end{align*}
Arguably the most fundamental aspect of orthogonal polynomials is the symmetric three-term recurrence that they satisfy:
 \begin{align*}
        x p_j(x;\mu) = a_j(\mu) p_j(x;\mu) + b_{j-1}(\mu) p_{j-1}(x;\mu) + b_j(\mu) p_{j+1}(x;\mu), \quad j \geq 0,
\end{align*}
for sequence $a_j(\mu),b_j(\mu)$, $j \geq 0$ where $b_j(\mu) > 0$ for $j \geq 0$ and $b_{-1}(\mu) = p_{-1}(x;\mu) = 0.$

\begin{definition}
Let $\mu$ be a Borel measure on $\mathbb R$ with an infinite number of points in its support\footnote{This is necessary to ensure that the orthogonal polynomial sequence is infinite.}.  Then define the Jacobi operator
\begin{align*}
    \vec J(\mu) = \begin{bmatrix} a_0(\mu) & b_0(\mu) \\
    b_0(\mu) & a_1(\mu) & b_1(\mu)\\
    & b_1(\mu) & a_2(\mu) & \ddots\\
    && \ddots & \ddots \end{bmatrix}.
\end{align*}
Finite truncations are referred to as Jacobi matrices:
     \begin{align*}
        \vec J_N(\mu) := \begin{bmatrix} a_0(\mu) & b_0(\mu) \\
    b_0(\mu) & a_1(\mu) & b_1(\mu)\\
    & b_1(\mu) & a_2(\mu) & \ddots\\
    && \ddots & \ddots &  b_{N-2}(\mu) \\
    & & & b_{N-2}(\mu) & a_{N-1}(\mu)\end{bmatrix}.
    \end{align*}
\end{definition}

\subsection{Gaussian quadrature}

We now include a brief discussion of the development of Gaussian quadrature rules.  A quadrature rule on $\mathbb R$ consists of a set of nodes $x_1 < x_2 < \cdots < x_N$ and weights $w_j$, $j = 1,2,\ldots,N$ such that, informally,
\begin{align*}
    \int_{\mathbb R} f(x) \mu(\D x) \approx \sum_j w_j f(x_j).
\end{align*}
The latter expression can be identified with a measure $\sum_j w_j \delta_{x_j}$. We write
\begin{align*}
    E_N(f) = E_N(f;(x_j),(w_j)) = \int_{\mathbb R} f(x) \mu(\D x) - \sum_j w_j f(x_j).
\end{align*}
A quadrature formula is said to have degree of exactness $d$ if
\begin{align*}
    E_N(p) = 0, \quad \forall p \in \rm{span}\{1, \diamond, \ldots,\diamond^d\}.
\end{align*}

While there are many ways to motivate the following, for us, the definition of a Gaussian quadrature rule for a measure $\mu$ comes from the following observation from inverse spectral theory.  For convenience, suppose that $\mu$ has compact support, then \cite{DeiftOrthogonalPolynomials}
\begin{align*}
    \int_{\mathbb R} \frac{\mu(\D x)}{x - z} = \vec e_1^T (\vec J(\mu) - z)^{-1} \vec e_1, \quad \Im z > 0.
\end{align*}
If we instead considered the finite truncation $\vec J_N$, using its eigenvalue decomposition,
\begin{align*}
\vec U = \begin{bmatrix} \vec u_1 & \vec u_2 & \cdots & \vec u_N \end{bmatrix},\quad 
    \vec \Lambda = \mathrm{diag}(\lambda_1, \ldots, \lambda_N),
\end{align*}
we find
\begin{align*}
    \vec e_1^T (\vec J_N - z)^{-1} \vec e_1 = \sum_{j=1}^N \frac{w_j}{\lambda_j -z}.% \quad w_j = |\vec u_{1j}|^2.
\end{align*}
We recognize the latter as
\begin{align*}
    \sum_{j=1}^N \frac{w_j}{\lambda_j -z} = \int_{\mathbb R} \frac{\mu_N(\D x)}{x - z}, \quad \mu_N = \sum_{j=1}^N |u_{1j}|^2 \delta_{\lambda_j},
\end{align*}
where $u_{ij}$ is the $(i,j)$ entry of $\vec U$. We call this $\mu_N$ the $N$th-order Gaussian quadrature rule for $\mu$ and it is well-known that it has degree of exactness $2N-1$ \cite{GautschiOP}.

\subsection{Interpolation}

Given a measure $\mu$ on $\mathbb I$ and its Jacobi operator $\vec J(\mu)$, the Gaussian quadrature rules associated to it provide a natural way to discretize the inner product $\langle \diamond,\diamond \rangle_\mu$:
\begin{align*}
    \langle f, g \rangle_{\mu,N} = \int_{\mathbb R} f(x) \overline{g(x)} \mu_N (\D x) = \sum_{j=1}^N f(\lambda_j) \overline{g(\lambda_j)} w_j.
\end{align*}
Define
\begin{align*}
    {\mathcal I}^\mu_N f (x) = \sum_{j=0}^{N-1} \langle f, p_j(\diamond;\mu) \rangle_{\mu,N}\, p_j(x;\mu).
\end{align*}
 We include the (classical) proof of the following because it requires the definition of quantities that will be of use in what follows.  See \cite{GautschiOP} for  more detail.
\begin{theorem}
   Consider a probability measure $\mu$ with $\mathrm{supp}(\mu) = \mathbb I$ and its Jacobi operator $\vec J(\mu)$, let $\lambda_j$, $j=1,2,\ldots,N$ denote the eigenvalues of $\vec J_N(\mu)$.  Then
    \begin{align*}
        {\mathcal I}^\mu_N f (\lambda_j) = f(\lambda_j), \quad j = 1,2,\ldots,N.
    \end{align*}
\end{theorem}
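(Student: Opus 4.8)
The plan is to reduce the claim to a discrete orthogonality relation for the values of the orthonormal polynomials at the quadrature nodes, and to obtain that relation from the orthogonality of the eigenvector matrix $\vec U$ of $\vec J_N(\mu)$. Throughout write $\vec J_N := \vec J_N(\mu)$ and $p_j := p_j(\diamond;\mu)$.

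First I would make precise the link between the spectral data of $\vec J_N$ and polynomial values. Fix an eigenvalue $\lambda_i$ and consider $\vec v_i := (p_0(\lambda_i), p_1(\lambda_i), \ldots, p_{N-1}(\lambda_i))^T$. Reading the symmetric three-term recurrence row by row, the first $N-1$ rows of $\vec J_N \vec v_i - \lambda_i \vec v_i$ vanish identically, while the last row vanishes precisely because $p_N(\lambda_i) = 0$ --- and the eigenvalues $\lambda_1 < \cdots < \lambda_N$ of $\vec J_N$ are exactly the simple real zeros of $p_N$, via the classical identification of $\det(\lambda - \vec J_N)$ with the monic polynomial $\pi_N$ (see \cite{GautschiOP}). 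Hence $\vec v_i$ is a nonzero scalar multiple of the normalized eigenvector $\vec u_i$; since $p_0 \equiv 1$ we have $w_i = |u_{1i}|^2 = 1/\|\vec v_i\|_2^2$, and therefore
\begin{align*}
  u_{\ell i} = \sqrt{w_i}\, p_{\ell-1}(\lambda_i), \qquad \ell = 1,\ldots,N,\quad i = 1,\ldots,N.
\end{align*}

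Next, because $\vec J_N$ is real symmetric, $\vec U$ is real orthogonal, so $\vec U^T \vec U = \id_N$. Reading off the $(i,k)$ entry and substituting the formula above gives the discrete orthogonality relation
\begin{align*}
  \sqrt{w_i w_k} \sum_{j=0}^{N-1} p_j(\lambda_i)\, p_j(\lambda_k) = \delta_{ik}, \qquad\text{equivalently}\qquad w_i \sum_{j=0}^{N-1} p_j(\lambda_i)\, p_j(\lambda_k) = \delta_{ik}.
\end{align*}
Plugging the definition $\langle f, p_j \rangle_{\mu,N} = \sum_{i=1}^N f(\lambda_i)\overline{p_j(\lambda_i)}\, w_i$ into ${\mathcal I}^\mu_N f$, evaluating at $x = \lambda_k$, interchanging the two finite sums, and using that the $p_j$ are real-valued on $\mathbb I$ to drop the conjugate,
\begin{align*}
  {\mathcal I}^\mu_N f(\lambda_k) = \sum_{i=1}^N f(\lambda_i)\, w_i \sum_{j=0}^{N-1} p_j(\lambda_i)\, p_j(\lambda_k) = \sum_{i=1}^N f(\lambda_i)\, \delta_{ik} = f(\lambda_k).
\end{align*}

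I expect the only non-routine step to be the first one: the spectral identification that the $\lambda_j$ are the simple zeros of $p_N$ and that the polynomial-value vectors $\vec v_i$ are the corresponding eigenvectors (equivalently, that $p_N(\lambda_i)=0$ and that eigenvalues of a Jacobi matrix with positive off-diagonal entries are simple). Everything after that is bookkeeping --- orthonormality of $\vec U$ and a sum interchange. As a byproduct the same computation shows ${\mathcal I}^\mu_N f$ is the unique polynomial of degree $\le N-1$ interpolating $f$ at the nodes $\lambda_1,\ldots,\lambda_N$.
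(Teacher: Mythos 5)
Your proof is correct and follows essentially the same route as the paper: both identify the columns of the orthogonal eigenvector matrix $\vec U_N(\mu)$ with weighted vectors of orthonormal polynomial values (the paper states this as "well-known" via \eqref{eq:Umat}, you re-derive it from the recurrence and $p_N(\lambda_i)=0$), and both then use $\vec U_N^T\vec U_N = \id_N$ to invert the map from function values to discretized coefficients. The only difference is one of presentation --- the paper does the bookkeeping as a single matrix identity $\vec F_N(\mu) = \vec U_N(\mu)\vec W_N(\mu)$ and inverts it, whereas you unpack the same content into an explicit discrete orthogonality relation $w_i\sum_{j=0}^{N-1}p_j(\lambda_i)p_j(\lambda_k)=\delta_{ik}$ and sum interchange.
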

\begin{proof}
It is well-known that the orthonormal matrix of eigenvectors is given by
    \begin{align}\label{eq:Umat}
        \vec U_N(\mu) = \underbrace{\begin{bmatrix}
        \ddots & \vdots & \iddots\\
        \cdots & p_j(\lambda_\ell;\mu) & \cdots \\
        \iddots & \vdots & \ddots \end{bmatrix}}_{\vec P_N(\mu)} \vec W_N(\mu),
    \end{align}
    where $\vec W_N(\mu)$ is chosen to normalize the columns, and the index $j$ refers to rows while $\ell$ refers to columns.   Then
    \begin{align}\label{eq:weights}
        w_j =  w_j(\lambda,N) = \left( \sum_{\ell = 0}^{N-1} p_\ell(\lambda_j;\mu)^2 \right)^{-1}, \quad \vec W_N(\mu) = {\rm diag}(\sqrt w_1, \sqrt w_2,\ldots,\sqrt w_N).
    \end{align}
    Upon setting $c_j = \langle f, p_j(\diamond;\mu) \rangle_{\mu,N}$, we find
    \begin{align}\label{eq:FN}
        \begin{bmatrix} c_0 \\ \vdots \\ c_{N-1} \end{bmatrix} = \underbrace{\vec P_N(\mu) \vec W_N(\mu)^2}_{\vec F_N(\mu)} \begin{bmatrix} f(\lambda_1) \\ \vdots \\ f(\lambda_N) \end{bmatrix} = \vec U_N(\mu) \vec W_N(\mu) \begin{bmatrix} f(\lambda_1) \\ \vdots \\ f(\lambda_N) \end{bmatrix}.
    \end{align}
    Then because $\vec U_N(\mu)$ must be orthogonal, we find %U^T = D P^T
    \begin{align*}
        \begin{bmatrix} f(\lambda_1) \\ \vdots \\ f(\lambda_N) \end{bmatrix}  =   \vec W_N(\mu)^{-1} \vec U_N(\mu)^T \begin{bmatrix} c_0 \\ \vdots \\ c_{N-1} \end{bmatrix} =   \vec P_N(\mu)^T\begin{bmatrix} c_0 \\ \vdots \\ c_{N-1} \end{bmatrix}.
    \end{align*}
    %This is because the right-hand side is precisely the vector made up of the evaluations of ${\mathcal I}_N^\mu f$ at the $\lambda_j$'s.
\end{proof}

\subsection{Jacobi polynomials}

In this section we highlight properties of the orthogonal polynomials with respect to the two-parameter family of weight functions
\begin{align}\label{eq:jac-meas}
  w_{\alpha,\beta}(x) :=  Z^{-1}_{\alpha,\beta} (1 - x)^\alpha (1 + x)^\beta \one_{[-1,1]}(x), \quad \alpha,\beta > -1.
\end{align}
Here $Z_{\alpha,\beta}$ is the normalization constant so that
\begin{align*}
    \mu(\D x) = w_{\alpha,\beta}(x) \D x,
\end{align*}
is a probability measure on $\mathbb R$ and can be computed as
\begin{align*}
    Z_{\alpha,\beta} = \frac{_2F_1(1, -\alpha, 2 + \beta, -1)}{1 + \beta} + \frac{_2F_1(1, -\beta, 2 + \alpha, -1)}{1 + \alpha},
\end{align*}
in terms of the hypergeometric function $_2F_1$ \cite{DLMF}.  We use $p_j(x;\alpha,\beta)$ to refer to the $j$th orthonormal polynomial and $a_j(\alpha,\beta), b_j(\alpha,\beta)$ to refer to the recurrence coefficients.  The polynomial $p_j(x;\alpha,\beta)$ is called an orthonormal Jacobi polynomial.  The classical notation \cite{DLMF} is for unnormalized, and not monic, Jacobi polynomials is $P_j^{(\alpha,\beta)}(x)$ such that
\begin{align*}
    m_j(\alpha,\beta) := \int_{-1}^1 P_j^{(\alpha,\beta)}(x)^2 (1 - x)^\alpha (1 + x)^\beta \D x = \frac{ 2^{\alpha + \beta + 1} \Gamma( j + \alpha + 1) \Gamma(j + \beta + 1)}{(2 j + \alpha + \beta +1) \Gamma(j + \alpha + \beta + 1) j!},
\end{align*}
where $\Gamma(\diamond)$ is the Gamma function \cite{DLMF}.  Set $d_j = d_j(\alpha,\beta) = 2j + \alpha+ \beta$.  The polynomials satisfy the three-term recurrence relation
\begin{align*} 2j& (j + \alpha + \beta)(d_j -2) P_j^{(\alpha,\beta)}(x) \\
&= (d_j -1) \left[ d_j(d_j-2)x + \alpha^2 - \beta^2 \right] P_{j-1}^{(\alpha,\beta)}(x) - 2d_j (d_j - \beta - 1) (d_j - \alpha - 1) P_{j-2}^{(\alpha,\beta)}(x).
\end{align*}

\subsubsection{The Jacobi operator}
It follows that, for $j = 1,2,\ldots$
\begin{align*} b_{j-1}(\alpha,\beta) = \frac{2\sqrt{j}\sqrt{(j + \alpha)(j + \beta)} \sqrt{j + \alpha + \beta}}{d_j \sqrt{d_j^2 - 1}  }, \quad  a_{j-1}(\alpha,\beta) = \frac{\beta^2 - \alpha^2}{d_j(d_j-2)}.
\end{align*}
If any of these expressions are $0/0$ indeterminate, the issue can be resolved by fixing $j$ and taking a limit as $\alpha,\beta$ approach the desired value.

\section{Ultraspherical (Gegenbauer) methods}\label{sec:Ultra}

The classical ultraspherical polynomials, denoted by $C_j^{(\lambda)}(x)$, which are orthogonal with respect to $\mu_\lambda (\D x) \propto w_\lambda(x) \D x$,  $w_\lambda(x) = (1 - x^2)^{\lambda - \frac 1 2}$, are not orthonormal \cite{DLMF}. For convenience, define
\begin{align*}
p_j(x;\lambda) = p_j(x;\lambda - 1/2,\lambda-1/2), \quad \pi_j(x;\lambda) = \pi_j(x;\lambda - 1/2,\lambda-1/2).
\end{align*}
And we use the notation $a_j(\lambda) = a_j(\lambda - 1/2, \lambda - 1/2)$, $b_j(\lambda) = b_j(\lambda - 1/2, \lambda - 1/2)$.

Consider some quantities
\begin{align*}
    Z_\lambda &:= \int_{-1}^1 w_\lambda(x) \D x = \sqrt{\pi} \frac{\Gamma(\lambda + \frac 1 2)}{\Gamma(\lambda + 1)}, \quad 
    \tilde w_\lambda(x) = Z_\lambda^{-1} w_\lambda(x), \quad \mu_\lambda(\D x) = \tilde w_\lambda(x) \D x,\\
     k_j(\lambda) &:= \frac{2^j (\lambda)_j}{j!}, \quad 
     h_j(\lambda) := \frac{2^{1 - 2 \lambda} \pi \Gamma(j + 2 \lambda)}{(j + \lambda) \Gamma(\lambda)^2 j!}.
\end{align*}
Then define
\begin{align*}
    c_j(\lambda) =  \frac{\Gamma(\lambda + 1)}{ \sqrt{\pi}\Gamma(\lambda + \frac 1 2)} \frac{h_j(\lambda)}{k_j(\lambda)^2}, \quad \text{and} \quad
    p_j(x;\lambda) = \frac{1}{\sqrt{c_j(\lambda)}} \pi_j(x;\lambda),
\end{align*}
is appropriately normalized.

\subsection{Differentiation}\label{sec:diff}
It follows directly that the monic ultraspherical polynomials satisfy
\begin{align*}
    \pi_j'(x;\lambda) = j \pi_{j-1}(x;\lambda + 1), \quad 
    p_j'(x;\lambda) = j \sqrt{\frac{c_{j-1}(\lambda + 1)}{c_{j}(\lambda)}} p_{j-1}(x;\lambda + 1).
\end{align*}
The leads us to define
\begin{align*}
    \vec D_{\lambda \to \lambda +1} = \begin{bmatrix} 
    0 &  d_1(\lambda)\\
     & 0  &d_2(\lambda) \\
    & & 0 & d_3(\lambda) \\
    & & & 0  & \ddots \\
    & & & & \ddots
    \end{bmatrix}, \quad d_j(\lambda) := j \sqrt{\frac{c_{j-1}(\lambda + 1)}{c_{j}(\lambda)}} = j \sqrt{ \frac{2 (\lambda +1) (j+2 \lambda )}{2 j \lambda +j}}
\end{align*}
and
\begin{align*}
    \vec D_k(\lambda) = \vec D_{\lambda+k-1 \to \lambda+k} \cdots \vec D_{\lambda+1 \to \lambda+2} \vec D_{\lambda \to \lambda+ 1}, \quad \vec D_0 = \id.
\end{align*}
Thus, if $\vec c = (c_j)_{j \geq 0}$ are such that, formally,
\begin{align*}
    u(x) = \sum_j c_j p_j(x;\lambda),
\end{align*}
then for $\vec d = \vec D_k(\lambda) \vec c = (d_j)_{j \geq 0}$
\begin{align}
    u^{(k)}(x) = \sum_j d_j p_j(x;\lambda+k).
\end{align}

\subsection{Evaluation}\label{sec:eval}

The three-term recurrence can be used to evaluate an orthogonal polynomial series.  When the (finite number of) coefficients are known, Clenshaw's algorithm \cite{Clenshaw1955} is typically thought of as the best way to evaluate the series (see also \cite{Olver2020}), but if the coefficients are unknown --- they are the solution of a linear system --- we use the recurrence.  

Specifically, let $P = (x_1,\ldots,x_m)$ be a grid on $\mathbb I$.  Then define the evaluation matrix
\begin{align*}
    \vec P_{\lambda \to P} = \begin{bmatrix}
        p_0(x_1;\lambda) & p_1(x_1;\lambda) & p_2(x_1;\lambda) & \cdots \\
        p_0(x_2;\lambda) & p_1(x_2;\lambda) & p_2(x_2;\lambda) & \cdots \\
        \vdots & \vdots & \vdots\\
        p_0(x_m;\lambda) & p_1(x_m;\lambda) & p_2(x_m;\lambda) & \cdots \\
    \end{bmatrix}.
\end{align*}
Depending on the context, we might take $\vec P_{\lambda \to P}$ to have either a finite or an infinite number of columns. Then
\begin{align*}
    \begin{bmatrix} u^{(k)}(x_1) \\ \vdots \\  u^{(k)}(x_m) \end{bmatrix} = \vec P_{k + \lambda \to P} \vec D_k(\lambda) \vec c.
\end{align*}
To construct $\vec P_{\lambda \to P}$,  observe that the columns $\vec p_j$ satisfy the three-term recurrence:
\begin{align*}
    \vec p_{j+1} = \frac{1}{b_j(\lambda)} \left[ \vec x \cdot \vec p_j - a_j(\lambda) \vec p_j - b_{j-1}(\lambda) \vec p_j\right], \quad \vec p_{-1} = \vec 0, \quad \vec p_{0} = \vec 1.
\end{align*}
where $\vec x = (x_j)_{j=1}^m$ and $\cdot$ denotes the entrywise product.  This gives us all the tools required to solve \eqref{eq:bvp} using collocation.

\subsection{The URC method}\label{sec:coll}

We now use collocation to solve \eqref{eq:bvp} motivated by \cite{Driscoll2015}. To impose boundary conditions, if $u(x) = \sum_j u_j p_j(x;\lambda)$, $\vec u = (u_j)$ then
\begin{align*}
    \begin{bmatrix}
        u(a) \\
        u'(a) \\
        \vdots \\
        u^{(k-1)}(a)
    \end{bmatrix} = \underbrace{\begin{bmatrix} \vec P_{\lambda  \to \{a\}} \vec D_0(\lambda) \\ \vec P_{\lambda +1 \to \{a\}} \vec D_1(\lambda) \\ \vdots \\ \vec P_{\lambda+k-1 \to \{a\}} \vec D_{k-1}(\lambda) \end{bmatrix}}_{\vec E_a(\lambda,k)} \vec u.
\end{align*}

Let $P = (x_1,\ldots,x_{N -k})$ be a grid on $(-1,1)$, and define
\begin{align*}
    a_j(P) = \mathrm{diag}(a_j(x_1),\ldots,a_j(x_{N-k})).
\end{align*}
Set
\begin{align*}
\vec L_P = \sum_{j = 0}^k a_j(P)\vec P_{\lambda + j \to P} \vec D_j(\lambda).
\end{align*}
The discretized $N \times N$ collocation system is simply given by
\begin{align}\label{eq:coll}
    \vec L_{N}^{\rm{C}} \tilde{\vec u}_N &=  \begin{bmatrix} \vec S \vec E_{-1}(\lambda,k) + \vec T \vec E_{1}(\lambda,k)  \\
    \vec L_P \end{bmatrix} \vec Q_N \tilde{\vec u}_N = \begin{bmatrix}
        \vec b\\
        f(x_1) \\
        \vdots \\
        f(x_{N-k})
    \end{bmatrix},\quad \vec Q_N = \begin{bmatrix} \id_N \\ \vec 0 \\ \vdots \end{bmatrix}, \quad 
    \vec L_{N}^{\rm{C}} = \vec L_{N}^{\rm{C}}(a_0,\ldots,a_k).
\end{align}

\subsection{Connection coefficients (basis conversion)}\label{sec:connect}

In the following, we will need to convert an expansion in $p_j(x;\lambda)$ to one in $p_j(x;\lambda + 1)$ and we, of course, use connection coefficients for this purpose.  Write
\begin{align*}
    p_k(x;\lambda) = \sum_{j=0}^k c_{k,j} p_j(x;\lambda +1), \quad c_{k,j} = \int_{-1}^1 p_j(x;\lambda) p_k(x;\lambda +1) \tilde w_{\lambda+1}(x) \D x.
\end{align*}
It follows that this vanishes for $j < k$, by orthogonality of $p_k(x;\lambda + 1)$.  Furthermore, for $k > j + 2$, the orthogonality of $p_k(x;\lambda)$ and $(1 -x^2) p_j(x;\lambda + 1)$ implies this vanishes.  So, it remains to compute, for $k > 0$:
\begin{align*}
    \int_{-1}^1 p_k(x;\lambda+1) p_k(x;\lambda) \tilde w_{\lambda+1}(x) \D x  & =  \frac{\sqrt{c_k(\lambda+1)}}{\sqrt{c_k(\lambda)}},\\
    \int_{-1}^1 p_{k-1}(x;\lambda+1) p_k(x;\lambda) \tilde w_{\lambda+1}(x) \D x & = 0,\\
    \int_{-1}^1 p_{k-2}(x;\lambda+1) p_k(x;\lambda) \tilde w_{\lambda+1}(x) \D x & = -  \frac{Z_\lambda}{Z_{\lambda +1}} \frac{\sqrt{c_{k}(\lambda)}}{\sqrt{c_{k-2}(\lambda+1)}}.
\end{align*}
We then obtain the simplified relations
\begin{align*}
    \frac{\sqrt{c_k(\lambda+1)}}{\sqrt{c_k(\lambda)}} &= \sqrt{\frac{(\lambda +1) (k+2 \lambda ) (k+2 \lambda +1)}{2 (2 \lambda +1) (k+\lambda )(k+\lambda +1)}},\\
    \frac{Z_\lambda}{Z_{\lambda +1}} \frac{\sqrt{c_{k}(\lambda)}}{\sqrt{c_{k-2}(\lambda+1)}} &= \sqrt{\frac{(k-1) k (\lambda +1)}{2 (2 \lambda +1) (k+\lambda -1) (k+\lambda )}}.
\end{align*}
Define
\begin{align*}
    s_k(\lambda) &:= \begin{cases} 
        1 & k = 0,\\
         \sqrt{\frac{(\lambda +1) (k+2 \lambda ) (k+2 \lambda +1)}{2 (2 \lambda +1) (k+\lambda )(k+\lambda +1)}} & \text{otherwise}, \end{cases} \quad 
    t_k(\lambda) %&:= \begin{cases} 
        %1 & k = 0,\\
         := \sqrt{\frac{(k-1) k (\lambda +1)}{2 (2 \lambda +1) (k+\lambda -1) (k+\lambda )}}, %& \text{otherwise} %.\end{cases}\\
\end{align*}
and
\begin{align*}
    \vec C_{\lambda \to \lambda +1} = \begin{bmatrix}
    s_0(\lambda) & 0 & - t_2(\lambda) &   \\
    & s_1(\lambda) &  0  & - t_3(\lambda) &  \\
    & & s_2(\lambda)  & 0 & - t_4(\lambda) \\
    & & & \ddots & \ddots & \ddots 
    \end{bmatrix}.
\end{align*}
Therefore if $\vec d = \vec C_{\lambda \to \lambda +1} \vec c$ then, formally,
\begin{align*}
    \sum_j d_j p_j(x;\lambda + 1) = \sum_j c_j p_j(x,\lambda).
\end{align*}
And we use the notation 
\begin{align*}
 \vec C_{\lambda \to \lambda + k} = \vec C_{\lambda + k -1 \to \lambda +k } \cdots \vec C_{\lambda \to \lambda +1 }, \quad \vec C_{\lambda \to \lambda} = \id.
\end{align*}

\subsection{Function multiplication}\label{sec:mult}

To handle multiplication as an operator on coefficients, we will suppose that our input coefficients have rapidly converging orthogonal polynomial expansions.  But first, assume a finite expansion
\begin{align*}
    q(x) = \sum_{j=0}^{m} \alpha_j p_j(x;0).
\end{align*}
An expansion in a different orthogonal polynomial basis can be assumed, and the derivation below generalizes straightforwardly by replacing the recurrence coefficients in \eqref{eq:matrixpoly} appropriately. Then $\vec J(\lambda)$ encodes multiplication by $x$:
\begin{align*}
    u(x) = \sum_{j} u_j p_j(x;\lambda), \quad \vec v = \vec J(\lambda) \vec u, \quad x u(x) = \sum_j v_j p_j(x;\lambda),
\end{align*}
and therefore
\begin{align*}
    q(x) u(x) = \sum_{j} w_j p_j(x;\lambda), \quad \vec w = q(\vec J(\lambda)) \vec u.
\end{align*}

We need to develop (stable) methods to evaluate $ q(\vec J(\lambda)) \vec u$ or $ q(\vec J(\lambda))$.  To evaluate the latter, we will be able to replace $\vec u$ with an identity matrix.  The following gives the recurrence
\begin{align}\begin{split}\label{eq:matrixpoly}
    \vec p_0 &= \vec u,\\
    \vec p_1 &= \sqrt{2} \vec J(\lambda) \vec p_0,\\
    \vec p_2 &= 2 \vec J(\lambda) \vec p_{1} - \sqrt{2} \vec p_0,\\
    \vec p_j &= 2 \vec J(\lambda) \vec p_{j-1} - \vec p_{j-2}, \quad j \geq 3,    
\end{split}\end{align}
which is run simultaneously with the iterates
\begin{align*}
    \vec q_{-1} &= \vec 0,\\
    \vec q_{j} &= \vec q_{j-1} + \alpha_j \vec p_j, \quad 0 \leq j \leq m,
\end{align*}
and $\vec w = \vec q_m$.  We denote by $\vec M(q;\lambda)$ the resulting operator ($\vec u = \id$) when $m$ is finite, the limit of $\vec q_m$, if it exists, if $m = \infty$.

\subsection{The sparse ultraspherical method}\label{sec:usm}
We are now in a place to describe the sparse ultraspherical spectral method of Olver \& Townsend.  The method works by constructing a semi-infinite matrix representation of \eqref{eq:bvp}.  Specifically, the Petrov--Galerkin projections give
\begin{align*}
    \mathcal L = \sum_{j=0}^k a_j(x) \frac{\D^k}{\D x^k} \to \vec L : =\sum_{j=0}^k \vec M(a_j;k + \lambda)\vec C_{j + \lambda \to k + \lambda} \vec D_j(\lambda).
\end{align*}
Here the domain of $\vec L$ should be thought of as the expansion coefficients for a function in a $p_j(x;\lambda)$ series. A common choice is $\lambda = 0$.  Some symmetry properties can be maintained if one choose $\lambda = 1/2$ \cite{Aurentz2020}.  

Then we suppose that $f(x) = \sum_j f_j p_j(x;\lambda)$, $\vec f = (f_j).$  The full system for the unknown $\vec u$ becomes
\begin{align}\label{eq:PGM}
    \begin{bmatrix} \vec S \vec E_{-1}(\lambda,k) + \vec T \vec E_{1}(\lambda,k) \\
    \vec L \end{bmatrix} \vec u = \begin{bmatrix}
        \vec b\\
        \vec C_{\lambda \to \lambda +k} \vec f
    \end{bmatrix} =: \vec y.
\end{align}
If the coefficient functions $a_j$ are low-degree polynomials this system is very sparse.  Many methods can be employed to solve it, including: (1) finite-section truncations, (2) an adaptive QR procedure \cite{Olver2013} and (3) iterative methods after preconditioning.

\subsection{Ultraspherical estimates}\label{sec:est}

% Forster1993 Petras1996

In order to establish our convergence result, we will need some fairly detailed estimates on ultraspherical polynomials. The first result is a useful upper bound, see \cite{Forster1993}.
\begin{lemma}\label{l:bound}
    For $\lambda \geq 0$, there exists $c(\lambda)$ such that
    \begin{align*}
        |(\sin \theta)^\lambda p_j(\cos \theta;\lambda)| \leq c(\lambda), \quad j = 1,2,\ldots.
    \end{align*}
\end{lemma}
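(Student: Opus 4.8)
The plan is to reduce the claimed uniform bound $|(\sin\theta)^\lambda p_j(\cos\theta;\lambda)| \le c(\lambda)$ to the classical asymptotics for Gegenbauer polynomials. First I would relate the orthonormal polynomial $p_j(\cdot;\lambda)$ to the classically normalized Gegenbauer polynomial $C_j^{(\lambda)}$; using the normalization data already set up in Section~\ref{sec:Ultra} (the constants $c_j(\lambda)$, $k_j(\lambda)$, $h_j(\lambda)$), one has $p_j(x;\lambda) = C_j^{(\lambda)}(x)/\sqrt{h_j(\lambda) Z_\lambda^{-1}}$ up to bookkeeping, so that $\|C_j^{(\lambda)}\|_{L^2(\mu_\lambda)}^2 = h_j(\lambda) Z_\lambda^{-1}$ and the uniform bound on $p_j$ becomes a uniform bound on $C_j^{(\lambda)}(\cos\theta) (\sin\theta)^\lambda / \|C_j^{(\lambda)}\|$.

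The key estimate is the standard one (see, e.g., Szeg\H{o} \cite{Szego1939} or \cite{Forster1993}): for $0 < \theta < \pi$ and $\lambda > 0$,
\begin{align*}
    (\sin\theta)^\lambda \, C_j^{(\lambda)}(\cos\theta) = \frac{\Gamma(\lambda + \tfrac12)}{\Gamma(2\lambda)} \cdot \frac{2^{1-\lambda}\,\Gamma(j+2\lambda)}{\Gamma(j+\lambda+1)} \left[ \cos\!\big((j+\lambda)\theta - \tfrac{\lambda\pi}{2}\big) + O\!\big(j^{-1}(\sin\theta)^{-1}\big) \right],
\end{align*}
uniformly for $\theta$ bounded away from $0$ and $\pi$, and a complementary bound valid near the endpoints. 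I would combine the endpoint region (where one uses $|C_j^{(\lambda)}(\cos\theta)| \le C_j^{(\lambda)}(1) = \binom{j+2\lambda-1}{j}$ together with $(\sin\theta)^\lambda \le (C\theta)^\lambda$ and the fact that $\theta \lesssim 1/j$ there) with the interior region (where the displayed asymptotic controls everything). In both regions the prefactor $\Gamma(j+2\lambda)/\Gamma(j+\lambda+1) \asymp j^{\lambda-1}$ by Stirling, and dividing by $\|C_j^{(\lambda)}\| \asymp j^{\lambda - 1/2} / \sqrt{j+\lambda} \asymp j^{\lambda-1}$ (again Stirling applied to $h_j(\lambda)$) shows the ratio is $O(1)$ uniformly in $j$ and $\theta$, with the constant depending only on $\lambda$. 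The case $\lambda = 0$ is the familiar Chebyshev bound $|T_j(\cos\theta)| = |\cos j\theta| \le 1$ after the orthonormal rescaling, so it can be treated separately or recovered by continuity.

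The main obstacle is getting a bound that is genuinely uniform \emph{up to the endpoints} $\theta = 0, \pi$: the interior asymptotic degrades like $(\sin\theta)^{-1}$, so one cannot simply quote it on all of $(0,\pi)$. The clean way around this is the Mehler--Heine-type transition estimate, namely that on the range $\theta \in (0, c/j)$ one has $(\sin\theta)^\lambda |C_j^{(\lambda)}(\cos\theta)| \lesssim j^\lambda \cdot \theta^\lambda \cdot C_j^{(\lambda)}(1) \lesssim j^\lambda \cdot j^{-\lambda} \cdot j^{2\lambda-1} = j^{2\lambda - 1}$, which after division by $\|C_j^{(\lambda)}\| \asymp j^{\lambda-1}$ gives $O(j^\lambda)$ — too weak as stated, so one instead needs the sharper local bound $|C_j^{(\lambda)}(\cos\theta)| \lesssim j^{\lambda-1} (\theta + 1/j)^{-\lambda}$ valid for all $\theta \in (0,\pi/2]$ (and symmetrically near $\pi$), which interpolates between the endpoint growth and the interior oscillation. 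Granting that sharper bound — which is exactly the content of the estimates in \cite{Forster1993} — multiplying by $(\sin\theta)^\lambda \asymp \theta^\lambda \le (\theta + 1/j)^\lambda$ cancels the singular factor and yields the uniform constant. So the proof is essentially an invocation of \cite{Forster1993} combined with the Stirling asymptotics of the normalizing constants $c_j(\lambda)$, $h_j(\lambda)$ defined above; I would structure it as (i) rewrite $p_j$ in terms of $C_j^{(\lambda)}$, (ii) cite the sharp pointwise Gegenbauer bound, (iii) Stirling on the normalization, (iv) assemble, handling $\lambda=0$ by hand.
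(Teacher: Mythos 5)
Your proof is sound but goes by a longer route than the paper's.  The paper quotes, in one stroke, the non-asymptotic Kogbetliantz--F\"orster inequality
\begin{align*}
(\sin\theta)^\lambda\,\bigl|C_j^{(\lambda)}(\cos\theta)\bigr| \;\le\; 2\,\frac{\Gamma(j+\lambda)}{\Gamma(\lambda)\,\Gamma(j+1)}, \qquad j\ge 1,\ \lambda>0,
\end{align*}
which is uniform over all $\theta\in(0,\pi)$ with no degradation at the endpoints, and then simply applies Stirling to the rescaling factor $c_j^{(\lambda)}$ relating $p_j$ to $C_j^{(\lambda)}$.  There is no interior/endpoint splitting anywhere.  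Your proposal instead tries to assemble the estimate from the interior oscillatory asymptotic (which degrades like $(\sin\theta)^{-1}$ near the endpoints) plus a Mehler--Heine-type endpoint estimate; you correctly observe that the crude endpoint bound $|C_j^{(\lambda)}|\le C_j^{(\lambda)}(1)$ with $(\sin\theta)^\lambda\lesssim\theta^\lambda$ is too weak, and that one needs the sharper local bound $|C_j^{(\lambda)}(\cos\theta)|\lesssim j^{\lambda-1}(\theta+1/j)^{-\lambda}$.  But multiplied by $(\sin\theta)^\lambda\asymp\theta^\lambda$ that bound reproduces precisely the weighted Kogbetliantz inequality displayed above, so in the end you are citing essentially the same input from \cite{Forster1993} after an unnecessary detour through a two-region argument --- and the interior asymptotic in your first display never actually gets used.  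The one place your version is genuinely better: you treat $\lambda=0$ separately as the Chebyshev case $|T_j|\le 1$, which is the honest thing to do.  At $\lambda=0$ the Kogbetliantz right-hand side vanishes (because of the $\Gamma(\lambda)^{-1}$), $C_j^{(0)}$ itself vanishes, and the rescaling $c_j^{(\lambda)}$ diverges, so the paper's argument as written is formally a $0\cdot\infty$ indeterminate that it leaves to the reader to resolve by continuity.
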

\begin{proof}
    From \cite{ErvandKogbetliantz1924}, see also \cite{Forster1993}, we have
    \begin{align*}
        |(\sin \theta)^\lambda C^{(\lambda)}_j(\cos \theta)| \leq 2 \frac{\Gamma(j + \lambda)}{\Gamma(\lambda) \Gamma(j +1)}, \quad j = 1,2,\ldots.
    \end{align*}
    where $C^{(\lambda)}_j(x) \propto p_j(x;\lambda)$ is the ultraspherical polynomials as given in \cite{DLMF}.  This does not give these polynomials the same normalization as $P_j^{(\lambda - 1/2, \lambda -1/2)}$.  Then
    \begin{align*}
        \int_{-1}^1 C_j^{(\lambda)}(x)^2 \tilde w_\lambda(x) \D x = \frac{2^{1 - 2 \lambda} \pi \Gamma(j + 2 \lambda)}{(j + \lambda) (\Gamma(\lambda))^2 j!}  \frac{\Gamma(\lambda + 1)}{\sqrt{\pi}\Gamma(\lambda + \frac 1 2)}.
    \end{align*}
    So, we find that
    \begin{align*}
        p_j(x;\lambda) = c_j^{(\lambda)} C_j^{(\lambda)}(x), \quad c_j^{(\lambda)} = \sqrt{\frac{(j + \lambda)j!}{\Gamma(j + 2 \lambda)}} h(\lambda).
    \end{align*}
    Then it follows from Stirling's approximation that as $j \to \infty$
    \begin{align*}
        \sqrt{\frac{(j + \lambda)j!}{\Gamma(j + 2 \lambda)}} \frac{\Gamma(j + \lambda)}{\Gamma(j +1)} = 1 + o(1),
    \end{align*}
    and the claim follows.
\end{proof}

The next result concerns the behavior of the matrix $\vec W_N(\mu)$ and can be found in \cite{Petras1996}.  
\begin{lemma}\label{l:weights}
    Suppose $\lambda > -1/2$, and let $x_{1}(\lambda,N) < x_{2}(\lambda,N) < \cdots < x_{N}(\lambda,N)$ be the roots of $p_N(x,\lambda)$.  Then
    \begin{align*}
        w_j(\lambda,N) = Z_\lambda^{-1} \frac{\pi}{N} (1 - x_j^2)^\lambda ( 1 + O(N^{-2} (1 - x_j^2)^{-1})).
    \end{align*}
\end{lemma}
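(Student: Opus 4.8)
The plan is to reduce the estimate to a single sharp, index-uniform asymptotic for $[p_N'(x_j;\lambda)]^2$. From \eqref{eq:weights} we have $w_j(\lambda,N)^{-1} = \sum_{\ell=0}^{N-1}p_\ell(x_j;\lambda)^2$; the confluent Christoffel--Darboux formula rewrites this sum as $b_{N-1}(\lambda)\,p_N'(x_j;\lambda)\,p_{N-1}(x_j;\lambda)$ at a zero $x_j$ of $p_N(\cdot;\lambda)$, and for the Gegenbauer weight this collapses to the classical single-polynomial Christoffel identity $w_j(\lambda,N) = \kappa_N(\lambda)\,(1-x_j^2)^{-1}\,[p_N'(x_j;\lambda)]^{-2}$, with $\kappa_N(\lambda)$ an explicit ratio of Gamma functions (see \cite{Szego1939}). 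Since $b_{N-1}(\lambda) = \tfrac12 + O(N^{-2})$ from the closed form in Section~\ref{sec:Ultra}, the whole argument now concerns $(1-x_j^2)[p_N'(x_j;\lambda)]^2$.

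For this I would pass to the trigonometric variable $x = \cos\theta$ and put the Gegenbauer equation in Liouville normal form: with $v_N(\theta) := (\sin\theta)^\lambda p_N(\cos\theta;\lambda)$, eliminating the first-order term $2\lambda\cot\theta\,\partial_\theta$ yields $v_N'' + Q_N v_N = 0$, where $Q_N(\theta) = (N+\lambda)^2 + \lambda(1-\lambda)(\sin\theta)^{-2}$. The zeros $\theta_j$ of $v_N$ are exactly the preimages of the $x_j$, and because $v_N(\theta_j) = 0$ we get $p_N'(x_j;\lambda) = -(\sin\theta_j)^{-\lambda-1}v_N'(\theta_j)$, hence $(1-x_j^2)[p_N'(x_j;\lambda)]^2 = (1-x_j^2)^{-\lambda}v_N'(\theta_j)^2$ and $w_j(\lambda,N) = \kappa_N(\lambda)\,(1-x_j^2)^\lambda\,v_N'(\theta_j)^{-2}$. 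So the lemma is equivalent to showing that $v_N'(\theta_j)^2$ equals, up to relative error $O(N^{-2}(1-x_j^2)^{-1})$, the $j$-independent quantity $\tfrac{N}{\pi}Z_\lambda\,\kappa_N(\lambda)$.

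The $j$-independence, with exactly the stated error, comes from the Sonin energy function $\mathcal E_N(\theta) := v_N'(\theta)^2 + Q_N(\theta)v_N(\theta)^2$, for which the ODE gives $\mathcal E_N'(\theta) = Q_N'(\theta)v_N(\theta)^2$, so that $\mathcal E_N(\theta_j) = v_N'(\theta_j)^2$. On the set where $1 - x^2 \gtrsim (N+\lambda)^{-2}$ (which contains every zero except the $O(1)$ ones nearest $\pm1$) one has $Q_N \gtrsim (N+\lambda)^2$ and, using $v_N^2 \le \mathcal E_N/Q_N$, $\bigl|\tfrac{d}{d\theta}\log\mathcal E_N\bigr| \le |Q_N'|/Q_N \lesssim (N+\lambda)^{-2}(\sin\theta)^{-3}$; integrating from $\theta = \pi/2$ to $\theta_j$ and using $\int_{\theta_j}^{\pi/2}(\sin s)^{-3}\,\D s \lesssim (\sin\theta_j)^{-2} \asymp (1-x_j^2)^{-1}$ gives $\mathcal E_N(\theta_j) = \mathcal E_N(\pi/2)\bigl(1 + O((N+\lambda)^{-2}(1-x_j^2)^{-1})\bigr)$, which is the asserted error. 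The reference value $\mathcal E_N(\pi/2) = p_N'(0;\lambda)^2 + \bigl((N+\lambda)^2 + \lambda(1-\lambda)\bigr)p_N(0;\lambda)^2$ is available in closed form from the classical values $p_N(0;\lambda)$ and, via the differentiation formula of Section~\ref{sec:diff}, $p_N'(0;\lambda)$; the remaining step is then a single Stirling computation identifying $\kappa_N(\lambda)/\mathcal E_N(\pi/2)$ with $Z_\lambda^{-1}\tfrac{\pi}{N}$ at the required order. For the finitely many endpoint-adjacent zeros, where $1-x_j^2 \asymp N^{-2}$ and the asserted error is already $O(1)$, one skips the Sonin comparison and uses only the two-sided bound $w_j(\lambda,N) \asymp N^{-1}(1-x_j^2)^\lambda$, obtained from Lemma~\ref{l:bound} and a matching lower bound on $\sum_\ell p_\ell(x_j;\lambda)^2$.

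The main obstacle is this last bookkeeping: verifying that, after Stirling, $\kappa_N(\lambda)/\mathcal E_N(\pi/2)$ carries no residual $O(1/N)$ term at the order claimed --- equivalently, that the $1/N$ correction to the reference amplitude $v_N'(\pi/2)$ exactly cancels the $1/N$ gap between $N+\lambda$ and $N$ (the $O(N^{-2})$ from $b_{N-1}(\lambda)$ being harmless). This is a finite computation because $\mathcal E_N(\pi/2)$ is exact, but it is precisely the kind of cancellation that is easy to mishandle; the safeguard I would use is the exact normalization $\sum_j w_j(\lambda,N) = 1$ together with equidistribution of the $\theta_j$ in $(0,\pi)$ and the identity $\int_0^\pi(\sin\theta)^{2\lambda}\,\D\theta = Z_\lambda$, while otherwise following the endpoint analysis of \cite{Petras1996}.
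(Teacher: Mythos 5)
The paper does not prove Lemma~\ref{l:weights} at all; it simply cites \cite{Petras1996} (the only original input in the surrounding text is to combine the cited weight asymptotic with the extreme-zero asymptotic of \cite{KuijlaarsInterval} to conclude the error is $O(1)$). What you have written is therefore a self-contained derivation where the paper defers to the literature, and the strategy you chose --- reducing $w_j$ to $(1-x_j^2)^{\lambda}v_N'(\theta_j)^{-2}$ via the confluent Christoffel--Darboux identity and the Jacobi differential recurrence, then comparing $v_N'(\theta_j)^2$ across $j$ with the Sonin--P\'olya energy function $\mathcal E_N = v_N'^2 + Q_N v_N^2$ for the Liouville-normal form of the Gegenbauer equation --- is precisely the classical Szeg\H{o}-school route that a reference like \cite{Petras1996} (and Chapter~15 of \cite{Szego1939}) is built on. The Liouville potential $Q_N(\theta) = (N+\lambda)^2 + \lambda(1-\lambda)/\sin^2\theta$ is correct, the relations $p_N'(x_j;\lambda) = -(\sin\theta_j)^{-\lambda-1}v_N'(\theta_j)$ and $\mathcal E_N' = Q_N' v_N^2$ are correct, and the integrated Gronwall bound $\int_{\theta_j}^{\pi/2}(\sin s)^{-3}\,\D s \asymp (1-x_j^2)^{-1}$ delivers exactly the $O(N^{-2}(1-x_j^2)^{-1})$ relative error the lemma claims. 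So structurally the proof is right, and it has the virtue of being transparent where the paper is a citation.

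Two places are thinner than they read. First, the endpoint-adjacent zeros: your upper bound on $w_j$ requires a \emph{lower} bound $\sum_{\ell<N}p_\ell(x_j;\lambda)^2 \gtrsim N(1-x_j^2)^{-\lambda}$, and you assert a ``matching lower bound'' without indicating where it comes from; Lemma~\ref{l:bound} only gives the reverse inequality. This is a known two-sided Christoffel-function estimate but it does need an argument (e.g., Bessel-type local asymptotics near $\pm1$, or Nevai/M\'at\'e--Nevai--Totik), and for $-1/2<\lambda<0$ or $\lambda>1$ one must also confirm $Q_N>0$ on the whole integration interval $[\theta_j,\pi/2]$, not just at $\theta_j$, before $|\mathcal E_N'|/\mathcal E_N\le|Q_N'|/Q_N$ is legitimate. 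Second, the constant: you correctly flag that getting the prefactor $Z_\lambda^{-1}\pi/N$ to the stated accuracy requires the $O(1/N)$ contributions from $(N+\lambda)$ versus $N$, from $b_{N-1}(\lambda)$, and from the closed-form $\mathcal E_N(\pi/2)$ to cancel, and you propose $\sum_j w_j = 1$ plus equidistribution as a consistency check. That check controls the \emph{average} of the $w_j$, so by itself it pins down the constant only up to an $O(1/N)$ ambiguity that could leak into the pointwise error; to close the proof you really do have to carry out the Stirling computation of $\kappa_N(\lambda)/\mathcal E_N(\pi/2)$ to second order. Neither issue is a wrong turn --- both are exactly the parts of the Szeg\H{o}/Petras analysis that are easy to state and tedious to verify --- but as written they are the two load-bearing gaps.
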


To make full use of this result, we need asymptotics for the extreme roots of $p_N(x;\lambda)$.  By symmetry, it suffices to consider just one.  The following is from \cite{KuijlaarsInterval}:
\begin{align*}
x_{1}(\lambda,N) = -1 + c_\lambda N^{-2} + O(N^{-3}), \quad c_\lambda > 0.
\end{align*}
This establishes that the error term in Lemma~\ref{l:weights} is $O(1)$.  Therefore, we have the following:

\begin{lemma}[Aliasing estimate]\label{l:aliasing}
For $\lambda > -1/2$ there exists $C(\lambda) > 0$, independent of $N$, such that
\begin{align*}
    |\langle p_i(\diamond,\lambda), p_{j}(\diamond,\lambda) \rangle_{\mu_\lambda,N}| \leq C(\lambda),
\end{align*}
for all $i,j$.
\end{lemma}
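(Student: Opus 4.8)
The plan is to reduce everything to the quadrature weights via the definition $\langle p_i(\diamond,\lambda), p_j(\diamond,\lambda)\rangle_{\mu_\lambda,N} = \sum_{\ell=1}^N w_\ell(\lambda,N)\, p_i(x_\ell;\lambda) p_j(x_\ell;\lambda)$, where $x_\ell = x_\ell(\lambda,N)$ are the roots of $p_N(\diamond;\lambda)$. First I would substitute Lemma~\ref{l:weights}, together with the extreme-root asymptotics of Kuijlaars cited just above, to replace $w_\ell(\lambda,N)$ by $Z_\lambda^{-1}\frac{\pi}{N}(1-x_\ell^2)^\lambda$ up to a uniformly bounded multiplicative factor $1+O(1)$ (the point being that $N^{-2}(1-x_\ell^2)^{-1}$ stays bounded even for the extreme nodes $x_1,x_N$). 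So it suffices to bound $\frac{\pi}{N Z_\lambda}\sum_{\ell=1}^N (1-x_\ell^2)^\lambda\, p_i(x_\ell;\lambda)\, p_j(x_\ell;\lambda)$ uniformly in $i,j,N$.

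Next I would absorb the weight factor into the polynomials: writing $x_\ell = \cos\theta_\ell$, we have $(1-x_\ell^2)^\lambda = (\sin\theta_\ell)^{2\lambda}$, so each summand is $\bigl[(\sin\theta_\ell)^\lambda p_i(\cos\theta_\ell;\lambda)\bigr]\cdot\bigl[(\sin\theta_\ell)^\lambda p_j(\cos\theta_\ell;\lambda)\bigr]$. By Lemma~\ref{l:bound} each bracketed factor is bounded in absolute value by $c(\lambda)$ uniformly in the degree and the point (the $i=0$ case is trivial since $p_0\equiv 1$ and $(\sin\theta)^{2\lambda}\le 1$, handled separately or by noting $c(\lambda)$ can be taken to cover it). Hence each summand is at most $c(\lambda)^2$ in modulus, and the sum of $N$ such terms, times $\frac{\pi}{N Z_\lambda}$, is at most $\frac{\pi}{Z_\lambda}c(\lambda)^2$. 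Setting $C(\lambda) := 2\pi Z_\lambda^{-1} c(\lambda)^2$ (the factor $2$ or a slightly larger constant to swallow the $1+O(1)$ from the weights) gives the claim.

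The main obstacle — really the only non-bookkeeping point — is making sure the error term in Lemma~\ref{l:weights} is genuinely $O(1)$ and not merely $O(1)$ away from the endpoints. This is exactly what the Kuijlaars asymptotic $x_1(\lambda,N) = -1 + c_\lambda N^{-2} + O(N^{-3})$ buys us: it forces $1-x_\ell^2 \gtrsim N^{-2}$ for every node, so $N^{-2}(1-x_\ell^2)^{-1} = O(1)$ uniformly, and the multiplicative error $1+O(N^{-2}(1-x_\ell^2)^{-1})$ in the weight formula stays in a fixed compact interval of $(0,\infty)$. One should also confirm $\lambda\ge 0$ is what is needed to invoke Lemma~\ref{l:bound} (the hypothesis there), whereas Lemma~\ref{l:weights} only needs $\lambda>-1/2$; since the statement of Lemma~\ref{l:aliasing} as written asks for $\lambda>-1/2$, I would either restrict to $\lambda\ge 0$ or note that for $-1/2<\lambda<0$ one can still bound $(\sin\theta)^\lambda p_j(\cos\theta;\lambda)$ by a different classical estimate — but since all uses in the paper take $\lambda>0$, restricting the argument to $\lambda\ge 0$ is harmless. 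Everything else is a one-line estimate.
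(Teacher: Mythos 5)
Your proof is correct and is essentially the argument the paper intends: the paper presents Lemma~\ref{l:aliasing} as an immediate consequence of Lemma~\ref{l:weights} (whose error term it first shows is $O(1)$ via the Kuijlaars extreme-root asymptotics) together with the Kogbetliantz-type bound of Lemma~\ref{l:bound}, precisely the two ingredients you combine. Your observation that Lemma~\ref{l:bound} is stated for $\lambda \geq 0$ while the lemma claims $\lambda > -1/2$ is a fair reading; in every application in the paper the parameter is $\lambda + k \geq 1$, so restricting to $\lambda \geq 0$ is indeed harmless.
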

And then we have another useful, yet crude, bound from \cite[Theorem 7.32.1]{Szego1939}, after accounting for normalizations.
\begin{lemma}\label{l:opgrowth}
For $\lambda \geq 0$, there exists $\ell(\lambda)$ such that
\begin{align*}
    \|p_j(\diamond;\lambda)\|_\infty \leq \ell(\lambda) (j+1)^\lambda, \quad j \geq 0.
\end{align*}
\end{lemma}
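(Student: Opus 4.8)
The plan is to reduce the statement to the classical sup-norm bound for Jacobi polynomials in \cite[Theorem~7.32.1]{Szego1939} and then absorb the discrepancy between the classical normalization and the orthonormal-on-a-probability-measure normalization used here into the constant $\ell(\lambda)$. Since $\lambda \geq 0$ means the Jacobi parameters $\alpha = \beta = \lambda - \tfrac12$ satisfy $\max(\alpha,\beta) = \lambda - \tfrac12 \geq -\tfrac12$, Szeg\H{o}'s theorem applies and gives that $\max_{x \in \mathbb I}|P_j^{(\lambda-1/2,\lambda-1/2)}(x)|$ is attained at the endpoints $x = \pm 1$, with value $P_j^{(\lambda-1/2,\lambda-1/2)}(1) = \binom{j+\lambda-1/2}{j} = \Gamma(j+\lambda+\tfrac12)/(\Gamma(\lambda+\tfrac12)\,j!)$.

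Next I would convert normalizations. By the definitions recalled in Sections~\ref{sec:OPs} and \ref{sec:Ultra}, the polynomial orthonormal with respect to the probability measure $\mu_\lambda$ is
\begin{align*}
    p_j(x;\lambda) = \sqrt{\frac{Z_\lambda}{m_j(\lambda - \tfrac12,\lambda-\tfrac12)}}\, P_j^{(\lambda-1/2,\lambda-1/2)}(x),
\end{align*}
so that, plugging in the explicit formula for $m_j$ from the subsection on Jacobi polynomials,
\begin{align*}
    \|p_j(\diamond;\lambda)\|_\infty = \sqrt{\frac{Z_\lambda}{m_j(\lambda-\tfrac12,\lambda-\tfrac12)}}\,\frac{\Gamma(j+\lambda+\tfrac12)}{\Gamma(\lambda+\tfrac12)\,j!} = A(\lambda)\sqrt{\frac{(j+\lambda)\,\Gamma(j+2\lambda)}{j!}},
\end{align*}
where $A(\lambda)$ collects all the $j$-independent factors. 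Then Stirling's formula gives $\Gamma(j+2\lambda)/j! = \Gamma(j+2\lambda)/\Gamma(j+1) = j^{2\lambda-1}(1+o(1))$, whence $\|p_j(\diamond;\lambda)\|_\infty = A(\lambda)\,j^{\lambda}(1+o(1))$ as $j \to \infty$.

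To pass from this asymptotic statement to a bound valid for every $j \geq 0$, I would observe that the sequence $j \mapsto \|p_j(\diamond;\lambda)\|_\infty/(j+1)^\lambda$ is strictly positive (for $j=0$ it equals $\|p_0\|_\infty = 1$, since $\mu_\lambda$ is a probability measure) and has the finite limit $A(\lambda)$; hence its supremum over $j \geq 0$ is finite, and we take $\ell(\lambda)$ to be that supremum.

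I do not expect a real obstacle: the content is entirely in \cite{Szego1939}, and what remains is the bookkeeping of three competing normalizations ($P_j^{(\alpha,\beta)}$, the classical Gegenbauer $C_j^{(\lambda)}$, and the present $p_j(\diamond;\lambda)$) plus one Stirling estimate — both of which already appear in the proof of Lemma~\ref{l:bound}. The only points requiring mild care are the boundary case $\lambda = 0$ (where $\max(\alpha,\beta) = -\tfrac12$ sits exactly at Szeg\H{o}'s threshold, and where one should work with $p_j(\diamond;\lambda)$ directly rather than with $C_j^{(\lambda)}$, which degenerates there) and the fact that $\ell(\lambda)$ is permitted to grow with $\lambda$, which is harmless since $\lambda$ is fixed throughout. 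An alternative route avoiding Szeg\H{o}'s theorem — combining the weighted bound of Lemma~\ref{l:bound} on the region $1 - x^2 \gtrsim j^{-2}$ with the monotonicity of $|P_j^{(\lambda-1/2,\lambda-1/2)}|$ near the endpoints and its value at $x = \pm 1$ — yields the same conclusion but is less economical.
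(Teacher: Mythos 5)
Your argument is correct and is exactly what the paper's one-line justification ("from \cite[Theorem 7.32.1]{Szego1939}, after accounting for normalizations") has in mind: apply Szeg\H{o}'s endpoint bound $\max_{\mathbb I}|P_j^{(\lambda-\frac12,\lambda-\frac12)}| = \binom{j+\lambda-1/2}{j}$ (valid since $\max(\alpha,\beta)=\lambda-\tfrac12\ge -\tfrac12$ for $\lambda\ge 0$), rescale by $\sqrt{Z_\lambda/m_j(\lambda-\tfrac12,\lambda-\tfrac12)}$ to pass to the $p_j(\diamond;\lambda)$ normalization, and finish with Stirling plus the observation that a positive sequence with a finite limit is bounded. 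The bookkeeping you carried out (the cancellation of $\Gamma(j+\lambda+\tfrac12)$ and the reduction to $\sqrt{(j+\lambda)\Gamma(j+2\lambda)/j!}\sim j^\lambda$) is right, and the remark about $j=0$ handling the edge case is appropriate.
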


\section{Convergence}\label{sec:conv}

The proof of convergence for the collocation method \eqref{eq:coll} with ultraspherical polynomial roots as collocation nodes proceeds in three main steps.
\begin{enumerate}
    \item First, we compare \eqref{eq:coll} with finite sections of \eqref{eq:PGM} when the coefficient functions are polynomials of slowly growing degree.  To effectively compare the operators involved, we have to use both left and right `preconditioners'.  Here Lemma~\ref{l:aliasing} plays a crucial role.
    \item Then, we effectively review the convergence proof of Olver \& Townsend and introduce stability estimates to understand the effect of approximating coefficient functions with polynomials.
    \item Lastly, we use another stability estimate to understand the effect of replacing coefficient functions with polynomials in the collocation method.
\end{enumerate}

\subsection{Preliminaries}\label{sec:prelim}

We first need to study the regularity of the coefficient functions and its effect on the operators $\vec M(a_j,\lambda + k)$.  We consider weighted norms, so we introduce
\begin{align*}
    \bDelta^{(s)} &= \mathrm{diag}(1, 2^s, 3^s, \ldots), \quad  \bDelta^{(s)}_N = \mathrm{diag}(1, 2^s, 3^s, \ldots,N^s).
    %_k\bDelta^{(s)} &= \mathrm{diag}((k+1)^s, (k+2)^s, \ldots), \quad  _k\bDelta^{(s)}_N = \mathrm{diag}((k+1)^s, (k+2)^s, \ldots,N^s).
\end{align*}
We have the following proposition
\begin{proposition}\label{l:multiplication_stability}
    Suppose $\vec f = (f_j)_{j \geq 0}$, $\vec g = (g_j)_{j \geq 0}$ are such that $\vec f ,\vec g \in \ell^1_{s + 1}$ for $s \geq 0$ and set
    \begin{align*}
        f(x) := \sum_{j=0}^\infty f_j p_j(x;0), \quad g(x) := \sum_{j=0}^\infty g_j p_j(x;0).
    \end{align*}
    Then $\bDelta^{(s)} \vec M(f,\lambda)\bDelta^{(-s)}$ and $\bDelta^{(s)} \vec M(g,\lambda)\bDelta^{(-s)}$ are both bounded on $\ell^2(\mathbb N)$ and
    we have
    \begin{align*}
        \|\bDelta^{(s)} (\vec M(f,\lambda) - \vec M( g,\lambda))\bDelta^{(-s)}\|_{\ell^2} \leq C \|\vec f - \vec g\|_{\ell^1_{s+1}}.
    \end{align*}
\end{proposition}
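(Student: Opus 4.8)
The plan is to reduce the claim to a single‑function estimate using linearity. The iteration defining $\vec q_m$ in \eqref{eq:matrixpoly} is linear in the coefficient sequence, so $\vec M(f,\lambda) - \vec M(g,\lambda) = \vec M(f-g,\lambda)$, and it suffices to show that for every $\vec h = (h_j)_{j\ge 0}\in\ell^1_{s+1}$ with $h = \sum_j h_j p_j(\diamond;0)$ one has
\begin{align*}
\big\|\bDelta^{(s)}\vec M(h,\lambda)\bDelta^{(-s)}\big\|_{\ell^2}\le C(\lambda)\,\|\vec h\|_{\ell^1_{s+1}};
\end{align*}
applying this with $\vec h\in\{\vec f,\vec g,\vec f-\vec g\}$ then yields both the stated boundedness and the Lipschitz bound.

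The next step is to unwind $\vec M(h,\lambda)$. With $\vec u=\id$ the recurrence \eqref{eq:matrixpoly} is precisely the three-term recurrence for the polynomials $p_j(\diamond;0)$ evaluated at the Jacobi matrix $\vec J(\lambda)$, so $\vec q_m = \sum_{j=0}^m h_j\,p_j(\vec J(\lambda);0)$ and $\vec M(h,\lambda)=\sum_{j\ge 0}h_j\,p_j(\vec J(\lambda);0)$. Two features of the summands drive the estimate. First, since $\vec J(\lambda)$ encodes multiplication by $x$ on coefficients in the $p_j(\diamond;\lambda)$ basis, $p_j(\vec J(\lambda);0)$ is the matrix of multiplication by the polynomial $p_j(\diamond;0)$ on $L^2(\mu_\lambda)$; as this basis is orthonormal and $\mathrm{supp}(\mu_\lambda)\subseteq\mathbb I$, we get $\|p_j(\vec J(\lambda);0)\|_{\ell^2}\le\|p_j(\diamond;0)\|_\infty\le\ell(0)$ by Lemma~\ref{l:opgrowth}. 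Second, since $\vec J(\lambda)$ is tridiagonal, $p_j(\vec J(\lambda);0)$ has bandwidth $j$: its $(k,n)$ entry vanishes when $|k-n|>j$. Consequently $\sum_j h_j p_j(\vec J(\lambda);0)$ converges absolutely in operator norm, which already identifies $\vec M(h,\lambda)$ with this limit.

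The one step requiring actual work is a weighted-norm bound for a single banded matrix: if $A$ has bandwidth $m$ and $\|A\|_{\ell^2}\le K$ then $\|\bDelta^{(s)}A\bDelta^{(-s)}\|_{\ell^2}\le\sqrt{2}\,(m+1)^{s+1/2}K$. I would obtain this by Schur testing: for finitely supported $\vec v$, write $\|\bDelta^{(s)}A\bDelta^{(-s)}\vec v\|^2=\sum_k k^{2s}\big|\sum_n A_{kn}n^{-s}v_n\big|^2$, observe that each inner sum has at most $2m+1$ nonzero terms and apply Cauchy--Schwarz, interchange the $k$- and $n$-sums, and bound $k^{2s}\le(m+1)^{2s}n^{2s}$ on the band together with $\sum_k|A_{kn}|^2=\|A\vec e_n\|^2\le K^2$. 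Applying this with $A=p_j(\vec J(\lambda);0)$, $m=j$ and $K=\ell(0)$, then summing, gives
\begin{align*}
\big\|\bDelta^{(s)}\vec M(h,\lambda)\bDelta^{(-s)}\big\|_{\ell^2}\le\sum_{j\ge0}|h_j|\,\big\|\bDelta^{(s)}p_j(\vec J(\lambda);0)\bDelta^{(-s)}\big\|_{\ell^2}\le\sqrt{2}\,\ell(0)\sum_{j\ge0}(j+1)^{s+1/2}|h_j|\le\sqrt{2}\,\ell(0)\,\|\vec h\|_{\ell^1_{s+1}},
\end{align*}
using $(j+1)^{s+1/2}\le(j+1)^{s+1}$ and the index shift $j\mapsto j+1$ built into $\bDelta^{(s)}=\diag(1,2^s,3^s,\ldots)$, so $C(\lambda)=\sqrt2\,\ell(0)$ works.

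I do not anticipate a genuine obstacle. The only points needing care are commuting $\bDelta^{(s)}$ past the operator-norm-convergent series (harmless, since the weighted series is itself absolutely convergent, so $\bDelta^{(s)}\vec M(h,\lambda)\bDelta^{(-s)}$ may simply be read as $\lim_m\bDelta^{(s)}\vec q_m\bDelta^{(-s)}$, consistent with the definition of $\vec M$), and tracking the powers of $m+1$ so that the final series is exactly an $\ell^1_{s+1}$ norm: one factor of $m+1$ comes from the width of the band, and the factor $(m+1)^s$ from the ratio $k^s/n^s\le(m+1)^s$ across a band of width $m$. A slightly weaker weight, such as $\ell^1_{s+1/2}$, would in fact suffice.
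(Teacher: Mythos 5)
Your proof is correct and follows the same overall strategy as the paper's: reduce to one function by linearity, expand $\vec M(h,\lambda)=\sum_j h_j\,p_j(\vec J(\lambda);0)$ via the functional calculus, and bound the weighted norm of each bandwidth-$j$ term before summing. Where you differ is in the single-term estimate. The paper notes $\|T_j(\vec J(\lambda))\|_{\ell^2}\le 1$ hence every \emph{entry} of $T_j(\vec J(\lambda))$ has modulus $\le 1$, then decomposes the banded matrix into its $2j+1$ diagonals $\vec S_\ell$ and observes that conjugation by $\bDelta^{(\pm s)}$ inflates the $\ell$th diagonal by at most $(1-\ell)^s$ when $\ell<0$; this yields $\|\bDelta^{(s)} p_j(\vec J(\lambda);0)\bDelta^{(-s)}\|_{\ell^2}\le \sqrt2\,(j+1)(1+(j+1)^s)$. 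You instead keep the operator-norm bound $\|p_j(\vec J(\lambda);0)\|_{\ell^2}\le\|p_j(\diamond;0)\|_\infty$ intact and feed the bandwidth into a Schur-test computation using column norms $\sum_k |A_{kn}|^2\le K^2$, obtaining the marginally sharper $\sqrt2\,(j+1)^{s+1/2}K$. Both routes land in $\ell^1_{s+1}$; your observation that $\ell^1_{s+1/2}$ would already suffice is a genuine (if minor) sharpening the paper does not record. The only point I'd flag for care is that $\bDelta^{(s)}\vec J(\lambda)\bDelta^{(-s)}$ is no longer symmetric, so the identity $\bDelta^{(s)}p_j(\vec J(\lambda);0)\bDelta^{(-s)}=p_j(\bDelta^{(s)}\vec J(\lambda)\bDelta^{(-s)};0)$ must be read as a statement about polynomial functional calculus (which it is) rather than spectral calculus; you sidestep this entirely by never forming the conjugated Jacobi matrix, which is cleaner.
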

\begin{proof}
    It follows that
    \begin{align*}
        \|T_j(\vec J(\lambda))\|_{\ell^2} \leq 1,  
    \end{align*}
    where $T_j$ is the $j$th Chebyshev polynomial of the first kind. And, in particular, every entry of $T_j(\vec J(\lambda))$ is bounded above by unity, in modulus.  Recall that $p_0(x;0) = T_0(x)$, and $p_j(x;0) = \sqrt{2} T_j(x)$, $j \geq 1$.  Since $T_j(\vec J(\lambda))$ has bandwidth most $j$, let $\vec S_j$ be the semi-infinite matrix with ones on the $j$th diagonal, $\vec S_0 = \id$. We have that
    \begin{align*}
        \|p_j( \bDelta^{(s)}\vec J(\lambda)\bDelta^{(-s)};0)\|_{\ell^2} \leq \sqrt{2} \sum_{\ell = -j}^{-1} (1 - \ell)^s \|\vec S_\ell\|_{\ell^2} + \sqrt{2}\sum_{\ell = 0}^{j} \|\vec S_\ell\|_{\ell^2} \leq \sqrt{2}(j + 1)( 1 + (1+j)^s).
    \end{align*}
    So the series
    \begin{align*}
        \sum_j f_j p_j( \bDelta^{(s)}\vec J(\lambda)\bDelta^{(-s)};0)
    \end{align*}
    is absolutely convergent as a sequence of operators on $\ell^2(\mathbb N)$.  Taking the difference of the two operators and bounding them term-by-term gives the result.
\end{proof}

\begin{corollary}\label{c:bounded}
    Suppose $f \in C^{q,\alpha}(\mathbb I)$ and $ \alpha + q > 2 + s$, then for the Chebyshev first-kind  expansion
    \begin{align*}
       \mathcal I_n^{\rm Ch}f(x): = \sum_{j=0}^{n-1} f_j p_j(x;0), \quad f_j = \langle f, p_j(\diamond;0)\rangle_{\mu_0},
    \end{align*}
    there exists $C > 0$ such that
    \begin{align*}
        \|\bDelta^{(s)} (\vec M(f,\lambda) - \vec M( \mathcal I_n^{\rm Ch}f,\lambda))\bDelta^{(-s)}\|_{\ell^2} \leq C \sum_{j = n}^\infty j^{-k -\alpha + s + 1} = O(n^{-k -\alpha + 2  + s}).
    \end{align*}
    and therefore for another constant $C'$
    \begin{align*}
        \|\bDelta^{(s)} \vec M( \mathcal I_n^{\rm Ch}f,\lambda)\bDelta^{(-s)}\|_{\ell^2} \leq C'.
    \end{align*}    
\end{corollary}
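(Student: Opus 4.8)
The plan is to deduce both estimates directly from Proposition~\ref{l:multiplication_stability}, the only extra input being the rate of decay of the coefficients $f_j$. Write $\vec f = (f_j)_{j\geq 0}$ and let $\vec f_n := (f_0,\dots,f_{n-1},0,0,\dots)$ be its truncation, so that $\mathcal I_n^{\rm Ch} f = \sum_{j\geq 0} (\vec f_n)_j\, p_j(\diamond;0)$ and $\vec f - \vec f_n = (0,\dots,0,f_n,f_{n+1},\dots)$. Being finitely supported, $\vec f_n \in \ell^1_{s+1}$ automatically; so once we know $\vec f \in \ell^1_{s+1}$, Proposition~\ref{l:multiplication_stability} applies verbatim to the pair $(f, \mathcal I_n^{\rm Ch}f)$.

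First I would record the coefficient decay. Since $p_0(\diamond;0) = T_0$ and $p_j(\diamond;0) = \sqrt 2\, T_j$ for $j \geq 1$ (with $Z_0 = \pi$), the $f_j$ are, up to the bounded factor $1/\sqrt2$, the Fourier cosine coefficients of the even $2\pi$-periodic function $g(\theta) := f(\cos\theta)$ (splitting into real and imaginary parts if $f$ is complex-valued). A chain-rule computation shows $g$ is $C^{q,\alpha}$ on the circle when $f \in C^{q,\alpha}(\mathbb I)$: the top-order term of $g^{(q)}$ is $f^{(q)}(\cos\theta)$ times a trigonometric polynomial, which is $\alpha$-H\"older because $f^{(q)}$ is $\alpha$-H\"older and $\cos$ is Lipschitz, while the lower-order terms are Lipschitz. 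The classical bound on the Fourier coefficients of a function whose $q$th derivative is $\alpha$-H\"older then gives $|f_j| \leq C_f (j+1)^{-q-\alpha}$ for all $j$ (see, e.g., \cite{TrefethenATAP,Canuto1988}). Since by hypothesis $q + \alpha > s + 2$, i.e.\ $s+1-q-\alpha < -1$, this yields $\sum_{j\geq 0}(j+1)^{s+1}|f_j| < \infty$, so $\vec f \in \ell^1_{s+1}$.

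With this in hand, Proposition~\ref{l:multiplication_stability} applied to $(f, \mathcal I_n^{\rm Ch}f)$ gives
\[
\|\bDelta^{(s)}(\vec M(f,\lambda) - \vec M(\mathcal I_n^{\rm Ch}f,\lambda))\bDelta^{(-s)}\|_{\ell^2} \leq C\,\|\vec f - \vec f_n\|_{\ell^1_{s+1}} = C \sum_{j=n}^{\infty}(j+1)^{s+1}|f_j| \leq C C_f \sum_{j=n}^{\infty}(j+1)^{s+1-q-\alpha},
\]
and, the exponent being $<-1$, comparison of the last sum with $\int_n^\infty x^{s+1-q-\alpha}\,\D x$ bounds it by $O(n^{s+2-q-\alpha})$ --- this is the first displayed inequality (with $q$ in the role of the exponent written there). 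For the second inequality I would apply the triangle inequality,
\[
\|\bDelta^{(s)}\vec M(\mathcal I_n^{\rm Ch}f,\lambda)\bDelta^{(-s)}\|_{\ell^2} \leq \|\bDelta^{(s)}\vec M(f,\lambda)\bDelta^{(-s)}\|_{\ell^2} + \|\bDelta^{(s)}(\vec M(f,\lambda) - \vec M(\mathcal I_n^{\rm Ch}f,\lambda))\bDelta^{(-s)}\|_{\ell^2},
\]
whose first term is finite by Proposition~\ref{l:multiplication_stability} (as $\vec f \in \ell^1_{s+1}$) and whose second term is $O(n^{s+2-q-\alpha})$; both are therefore bounded uniformly in $n$, which gives the constant $C'$. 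The only genuine content beyond bookkeeping is the coefficient-decay input --- specifically, transferring $C^{q,\alpha}(\mathbb I)$ regularity of $f$ to periodic $C^{q,\alpha}$ regularity of $f\circ\cos$ and reading off the exact power $q+\alpha$ --- and I expect that to be the step needing care; everything else is a one-line application of Proposition~\ref{l:multiplication_stability} together with an integral comparison.
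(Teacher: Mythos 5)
Your proof is correct and has the same overall skeleton as the paper's --- establish the Chebyshev coefficient decay $|f_j| = O(j^{-q-\alpha})$, then apply Proposition~\ref{l:multiplication_stability} together with an integral-comparison tail bound --- but the route to the coefficient decay is genuinely different. The paper gets it in one line from Jackson's theorem: choose a polynomial $q_j$ of degree $j-1$ with $\|f - q_j\|_\infty = O(j^{-q-\alpha})$, note $\langle q_j, p_j(\diamond;0)\rangle_{\mu_0} = 0$ by orthogonality, and conclude $|f_j| = |\langle f - q_j, p_j(\diamond;0)\rangle_{\mu_0}| \le \|f - q_j\|_\infty = O(j^{-q-\alpha})$. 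You instead transfer to the circle: identify the $f_j$ with Fourier cosine coefficients of $g = f\circ\cos$, verify via the chain rule that $g$ inherits periodic $C^{q,\alpha}$ regularity, and invoke the classical decay bound for Fourier coefficients of a H\"older function. Both are sound; the paper's argument is shorter and stays entirely on $\mathbb I$, while yours exposes the Fourier-analytic mechanism and would port more readily to settings where a change of variables to the circle is available but a sharp direct approximation theorem might not be to hand. You also correctly flag what appears to be a typo in the Corollary's displayed estimate: the exponent written as $-k-\alpha+s+1$ should read $-q-\alpha+s+1$, consistent with the hypothesis $\alpha + q > 2 + s$ and with the order $O(n^{-q-\alpha+2+s})$ that both your argument and the paper's actually deliver.
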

\begin{proof}
      From Jackson's theorem \cite{atkinson}, we can find a polynomial $q_j$ of degree $j-1$ that satisfies $\| f - q_j\|_\infty < D j^{-q - \alpha}$.  Then
    \begin{align*}
        |\langle f, p_j(\diamond;0) \rangle_{\mu_0}|  \leq |\langle q_j, p_j(\diamond;0) \rangle_{\mu_0}| + |\langle f - q_j, p_j(\diamond;0) \rangle_{\mu_0}| \leq D' j^{-q-\alpha},
    \end{align*}
    for a new constant $D'$, and the theorem follows.
\end{proof}

And we have the elementary fact.
\begin{lemma}\label{l:diff_spaces}
The operator $\vec D_j(\lambda) \bDelta^{(-j)}$ is bounded on $\ell^2(\mathbb N)$.
\end{lemma}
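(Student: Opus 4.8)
The plan is to exploit the very simple sparsity structure of $\vec D_j(\lambda)$. Since each factor $\vec D_{\mu \to \mu+1}$ in the product $\vec D_j(\lambda) = \vec D_{\lambda+j-1\to\lambda+j}\cdots\vec D_{\lambda\to\lambda+1}$ has nonzero entries only on its first superdiagonal, the composition $\vec D_j(\lambda)$ has nonzero entries only on its $j$-th superdiagonal, and right-multiplication by the diagonal matrix $\bDelta^{(-j)}$ leaves it supported there. An operator whose matrix is supported on a single fixed diagonal is a weighted shift, hence bounded on $\ell^2(\mathbb N)$ with operator norm equal to the supremum of the moduli of its entries. So the entire proof reduces to checking that the entries of $\vec D_j(\lambda)\bDelta^{(-j)}$ are uniformly bounded.

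Next I would write down those entries. Telescoping the one-step relation $p_n'(x;\mu) = d_n(\mu)\, p_{n-1}(x;\mu+1)$ (equivalently, multiplying out the matrix product) gives, for the $(i,i+j)$ entry of $\vec D_j(\lambda)$,
\begin{align*}
    \bigl(\vec D_j(\lambda)\bigr)_{i,i+j} = \prod_{m=0}^{j-1} d_{i+j-m}(\lambda+m),
\end{align*}
and right-multiplication by $\bDelta^{(-j)}$ multiplies this by the corresponding diagonal entry of $\bDelta^{(-j)}$, which is comparable to $(i+j)^{-j}$.

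It then remains to estimate the product. From the closed form $d_n(\mu) = \sqrt{n}\,\sqrt{2(\mu+1)(n+2\mu)/(2\mu+1)}$ and the fact that $n+2\mu \le (1+2\mu)n$ for $n\ge 1$ and $\mu \ge 0$ (which applies here since $\mu = \lambda + m$ with $\lambda \ge 0$), one gets $d_n(\mu) \le c(\mu)\,n$ with $c(\mu)$ depending only on $\mu$. Hence $\bigl(\vec D_j(\lambda)\bigr)_{i,i+j} \le c_j (i+j)^j$ for a constant $c_j$ depending only on $j$ and $\lambda$, so the entries of $\vec D_j(\lambda)\bDelta^{(-j)}$ are $O(1)$ uniformly in $i$; combined with the weighted-shift observation this proves the lemma.

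There is no real obstacle here: the statement is elementary once the single-diagonal structure is noted. The only points requiring a little care are bookkeeping of the index conventions (whether diagonals are numbered from $0$ or $1$) and making sure the bound $d_n(\mu) = O(n)$ holds uniformly over the finitely many values $\mu \in \{\lambda, \lambda+1, \ldots, \lambda+j-1\}$ occurring in the product — both immediate. One could equally well skip the closed form and read off $d_n(\mu) = O(n)$ directly from its definition.
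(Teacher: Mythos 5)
Your argument is correct. The paper offers no proof at all for this lemma — it is introduced with the phrase ``we have the elementary fact'' and stated without justification — and the check you supply is precisely the elementary one intended: $\vec D_j(\lambda)$ is supported on a single superdiagonal, so $\vec D_j(\lambda)\bDelta^{(-j)}$ is a weighted shift whose $\ell^2$ operator norm equals the supremum of its entries, and those entries are a product of $j$ factors each $O(n)$ divided by $n^j$, hence $O(1)$ uniformly. Nothing to add or correct; the only thing I would suggest trimming is the parenthetical about ``whether one could skip the closed form,'' since either route is a one-liner.
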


\subsection{Comparison of collocation and finite section} \label{sec:comp}
We recall the definition of $\vec F_N$ in \eqref{eq:FN}.
\begin{proposition}
    For $n > 0$, let $P = (x_1,\ldots,x_n)$ be the roots of $p_n(x;\lambda)$ and write
    \begin{align*}
        \vec F_n(\mu_\lambda) \vec P_{\lambda \to P} = \begin{bmatrix} \vec a_1 & \vec a_2 & \cdots \end{bmatrix} = \begin{bmatrix} \id_n & \vec a_{n+1} & \cdots \end{bmatrix}.
    \end{align*}
    That is, $\vec a_j = \vec e_j$ for $j = 1,\ldots,n$. For $j > n$, only the last $n-j$ entries of $\vec a_j$ may be non-zero and $|\vec a_{ij}| \leq C(\lambda)$ where $C(\lambda)$ is the constant in Lemma~\ref{l:aliasing}.  Furthermore, for
    \begin{align*}
        \bDelta_n^{(s)} \begin{bmatrix} \id_n & \vec a_{n+1} & \cdots & \cdots & \vec a_{n + m}\end{bmatrix}  \bDelta_{n+m}^{(-s)}  = \begin{bmatrix} \id_n & \check{\vec a}_{n+1} & \cdots & \cdots & \check{\vec a}_{n + m}\end{bmatrix},
    \end{align*}
    we have 
    \begin{align*}
        \|\check{\vec a}_{n + j}\|_{2}^2 \leq C(\lambda)^2 (n +j)^{-2s}\sum_{i=\max\{n-j +1,1\}}^{n} i^{2s}, \quad 1 \leq j \leq n.
    \end{align*}
\end{proposition}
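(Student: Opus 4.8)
The plan is to read off the entries of $\vec F_n(\mu_\lambda)\vec P_{\lambda\to P}$ as discrete inner products and then combine the degree of exactness of Gaussian quadrature with the uniform bound of Lemma~\ref{l:aliasing}. First recall from \eqref{eq:FN} that $\vec F_n(\mu_\lambda)$ sends a vector of values $(g(x_1),\ldots,g(x_n))$ to $(\langle g,p_0(\diamond;\lambda)\rangle_{\mu_\lambda,n},\ldots,\langle g,p_{n-1}(\diamond;\lambda)\rangle_{\mu_\lambda,n})$, since here the nodes $x_\ell$ are exactly the eigenvalues of $\vec J_n(\mu_\lambda)$, i.e. the roots of $p_n(\diamond;\lambda)$. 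The $j$th column of $\vec P_{\lambda\to P}$ is the vector of values of $p_{j-1}(\diamond;\lambda)$ at those nodes, so the $(i,j)$ entry of $\vec F_n(\mu_\lambda)\vec P_{\lambda\to P}$ is $\langle p_{j-1}(\diamond;\lambda),p_{i-1}(\diamond;\lambda)\rangle_{\mu_\lambda,n}$.

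Second, exploit exactness. The $n$-point Gaussian rule $\mu_{\lambda,n}$ integrates polynomials of degree $\le 2n-1$ exactly, so whenever $(i-1)+(j-1)\le 2n-1$ the discrete and continuous inner products coincide and the entry equals $\delta_{ij}$. For $1\le j\le n$ and any $1\le i\le n$ this always holds, giving $\vec a_j=\vec e_j$ for $j=1,\ldots,n$. For a column index $>n$, write it as $n+j$; then for a row $1\le i\le n$ the product $p_{i-1}p_{n+j-1}$ has degree $\le 2n-1$ unless $i\ge 2n-(n+j)+2=n-j+2$, and in the exact case the entry is $\langle p_{n+j-1},p_{i-1}\rangle_{\mu_\lambda}=0$ because $i-1<n\le n+j-1$ forces distinct indices. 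Hence only rows $i$ with $n-j+2\le i\le n$ of $\vec a_{n+j}$ can be nonzero (for $j=1$ this range is empty, consistent with the column of $\vec P_{\lambda\to P}$ vanishing since the $x_\ell$ are roots of $p_n$), and by Lemma~\ref{l:aliasing} each such entry is at most $C(\lambda)$ in modulus.

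Third, pass to the preconditioned matrix. Column $n+j$ of $\bDelta_{n+m}^{(-s)}$ scales by $(n+j)^{-s}$ and $\bDelta_n^{(s)}$ scales row $i$ by $i^s$, so $(\check{\vec a}_{n+j})_i=i^s(n+j)^{-s}(\vec a_{n+j})_i$, which is nonzero only for $n-j+2\le i\le n$ and bounded there by $i^s(n+j)^{-s}C(\lambda)$. Squaring and summing gives, for $1\le j\le n$,
\[
\|\check{\vec a}_{n+j}\|_2^2 \;\le\; C(\lambda)^2(n+j)^{-2s}\sum_{i=n-j+2}^{n}i^{2s}\;\le\; C(\lambda)^2(n+j)^{-2s}\sum_{i=\max\{n-j+1,1\}}^{n}i^{2s},
\]
where the last step merely enlarges the sum by at most one nonnegative term; this is the claim (the $\max$ covering the case $j=n$).

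I do not expect a genuine analytic obstacle: the result is essentially bookkeeping with the exactness of Gaussian quadrature, glued to the entrywise bound of Lemma~\ref{l:aliasing}. The one place where care is essential is the index bookkeeping — correctly identifying the $(i,j)$ entry of $\vec F_n\vec P_{\lambda\to P}$ with $\langle p_{j-1},p_{i-1}\rangle_{\mu_\lambda,n}$ from the definition \eqref{eq:FN}, and pinning down the sparsity threshold $i\ge 2n-j+2$ from the degree count $(i-1)+(j-1)\le 2n-1$, since a careless count would be off by one.
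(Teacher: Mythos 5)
Your proposal is correct and follows the same route the paper sketches: it reads the entries of $\vec F_n(\mu_\lambda)\vec P_{\lambda\to P}$ as discrete inner products, applies the degree-$2n-1$ exactness of Gaussian quadrature to get the identity block and the sparsity/zero pattern in the trailing columns, and then invokes Lemma~\ref{l:aliasing} for the uniform entrywise bound, followed by the straightforward diagonal rescaling. The paper states this as a one-line consequence; you have supplied the index bookkeeping (including the slightly generous lower limit $n-j+1$ in the sum, which you correctly note only adds a nonnegative term) that the paper leaves implicit.
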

\begin{proof}
 This is a direct consequence of Lemma~\ref{l:aliasing} and the fact that the Gaussian quadrature rule is exact for polynomials of degree $2n-1$.
\end{proof}

In the previous proposition, for $j \leq n$ we have
\begin{align*}
    \sum_{i=n-j+1}^n i^{2s} \leq  j n^{2s}
\end{align*}
and for $m \leq n$
\begin{align*}
    n^{2s}  \sum_{j=1}^{m} (n +j)^{-2s} j \leq m^2.
\end{align*}
We reach the conclusion that
\begin{align}\label{eq:column-estimate}
    \left\|\begin{bmatrix} \check{\vec a}_{n+1} & \cdots & \cdots & \check{\vec a}_{n + m}\end{bmatrix} \right\|_{\rm F} \leq C(\lambda) m.
\end{align}

We believe something stronger is true:
\begin{conjecture}[Aliasing estimate]\label{conj}
    There exists $c(\lambda) > 0$ such that $\| \vec a_j\|_{\ell^2} \leq c(\lambda)$ for all $n, j$.
\end{conjecture}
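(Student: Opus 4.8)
The plan is to recognize that, for $j>n$, the column $\vec a_j$ is the coefficient vector of the polynomial one obtains by \emph{aliasing} $p_{j-1}(\diamond;\lambda)$ down to degree $<n$, and then to bound that polynomial in $L^2(\mu_\lambda)$ using the ultraspherical estimates of Section~\ref{sec:est}.

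First I would fix $j>n$ and let $\tilde p_{j-1}$ be the unique polynomial of degree $<n$ agreeing with $p_{j-1}(\diamond;\lambda)$ at the roots $x_1,\dots,x_n$ of $p_n(\diamond;\lambda)$. Since $\tilde p_{j-1}(x_\ell)=p_{j-1}(x_\ell;\lambda)$ and $\tilde p_{j-1}\,p_{r-1}(\diamond;\lambda)$ has degree at most $2n-2$, exactness of the $n$-point Gaussian rule gives $(\vec a_j)_r=\langle p_{j-1}(\diamond;\lambda),p_{r-1}(\diamond;\lambda)\rangle_{\mu_\lambda,n}=\langle \tilde p_{j-1},p_{r-1}(\diamond;\lambda)\rangle_{\mu_\lambda}$ for $1\le r\le n$, so $\vec a_j$ is precisely the vector of the first $n$ orthonormal expansion coefficients of $\tilde p_{j-1}$. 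By Parseval, $\|\vec a_j\|_{\ell^2}^2=\|\tilde p_{j-1}\|_{L^2(\mu_\lambda)}^2$; applying Gaussian exactness once more to $\tilde p_{j-1}^2$ converts this to the discrete sum $\sum_{\ell=1}^n w_\ell(\lambda,n)\,|p_{j-1}(x_\ell;\lambda)|^2$. (For $j\le n$ the same identity holds trivially, both sides equalling $1$, so it suffices to bound this sum uniformly.)

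The second step is a termwise estimate in which the singular factors cancel. By Lemma~\ref{l:weights}, together with the extreme-root asymptotics recorded just after it --- which make the relative error in the weight $O(1)$ uniformly in $\ell$ and $n$ --- one gets $w_\ell(\lambda,n)\le C'(\lambda)\,n^{-1}(1-x_\ell^2)^\lambda$. By Lemma~\ref{l:bound}, writing $x_\ell=\cos\theta_\ell$ so that $\sin\theta_\ell=(1-x_\ell^2)^{1/2}$, one has $|p_{j-1}(x_\ell;\lambda)|\le c(\lambda)(1-x_\ell^2)^{-\lambda/2}$ uniformly in $j$ (the case $j-1=0$ is immediate). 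Multiplying, each summand is at most $C'(\lambda)c(\lambda)^2/n$, and summing over the $n$ nodes yields $\|\vec a_j\|_{\ell^2}^2\le C'(\lambda)c(\lambda)^2$, independent of $n$ and $j$ --- which is the claim.

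I expect the main obstacle to be the range of $\lambda$: Lemma~\ref{l:bound} is only available for $\lambda\ge0$, so the argument above proves Conjecture~\ref{conj} for $\lambda\ge0$ (which is all that is used in this paper, where the relevant ultraspherical parameter is $k+\lambda\ge1$) but leaves $\lambda\in(-1/2,0)$ open. Closing that gap requires an analogue of the Kogbetliantz-type bound, $|(1-x^2)^{\lambda/2}p_j(x;\lambda)|\le c(\lambda)$, for a negative parameter, where the weight $(1-x^2)^{\lambda/2}$ is now singular at $\pm1$; I would expect this to need a dedicated endpoint analysis of Jacobi polynomials. A secondary point to verify carefully is that the constant $C'(\lambda)$ really is uniform over all nodes and all $n$, which follows from the extreme-root expansion exactly as in the discussion following Lemma~\ref{l:weights}.
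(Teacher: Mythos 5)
This statement is a \emph{conjecture} in the paper, not a proved result: the author explicitly lists its resolution as the first open question in Section~\ref{sec:open}, so there is no paper proof to compare against. Your argument appears to actually resolve it, at least for $\lambda\ge 0$, which covers every case the paper uses (the relevant parameter is $k+\lambda\ge 1$). The decisive step is the finite Parseval identity: since $(p_0|_P,\dots,p_{n-1}|_P)$ is an orthonormal basis of the $n$-dimensional discrete inner product space (Gaussian exactness), and $(\vec a_j)_r=\langle p_{j-1},p_{r-1}\rangle_{\mu_\lambda,n}$, one has
\begin{align*}
\|\vec a_j\|_{\ell^2}^2=\sum_{r=1}^{n}\bigl|\langle p_{j-1},p_{r-1}\rangle_{\mu_\lambda,n}\bigr|^2=\langle p_{j-1},p_{j-1}\rangle_{\mu_\lambda,n},
\end{align*}
which is exactly the diagonal case of Lemma~\ref{l:aliasing}. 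In other words, the column $\ell^2$-norm collapses to a single diagonal entry of the aliasing Gram matrix, so the conjecture carries no new content beyond Lemma~\ref{l:aliasing}; your second paragraph simply re-derives that lemma's bound from Lemmas~\ref{l:weights} and~\ref{l:bound} and could be replaced by a citation. By contrast, the paper bounds $\|\check{\vec a}_{n+j}\|_2$ by summing the entrywise bound of Lemma~\ref{l:aliasing} over the $\lesssim j$ possibly nonzero entries, yielding the much weaker estimate \eqref{eq:column-estimate}; the Parseval collapse sidesteps that and gives a sharp, $j$- and $n$-independent constant. Your two caveats are both correctly identified and correctly dispatched: the $\lambda\ge 0$ restriction is inherited from Lemma~\ref{l:bound} (if one instead takes Lemma~\ref{l:aliasing} at face value for $\lambda>-1/2$, the conjecture follows on that full range), and the uniformity of the constant in Lemma~\ref{l:weights} across all nodes is supplied by the extreme-root asymptotic $1-x_1^2\asymp N^{-2}$ that the paper records. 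Neither is a gap; this reads as a correct resolution of Conjecture~\ref{conj}.
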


\begin{remark}
We note that this conjecture, if true, implies that for $ s > 1/2$
\begin{align*}
    \left\|\begin{bmatrix} \check{\vec a}_{n+1} & \cdots & \cdots & \check{\vec a}_{n + m}\end{bmatrix} \right\|_{\rm F}^2 \leq c(\lambda)^2 n^{2s} \sum_{j=n+1}^{m + n} j^{-2s} = O(n).
\end{align*}
As this implies the Frobenius norm is $O(n^{1/2}) = o(n)$, bounded independent of $m$, it would allow sending $m \to \infty$, for $N$ fixed in Theorem~\ref{t:main}, eliminating the need for some of the extra terms in the proof of Theorem~\ref{t:MAIN}.
\end{remark}

In the entirety of this section, we suppose that the grid $P$ is given by the roots of $p_{N-k}(x;\lambda + k)$ and $a_k(x) \equiv 1$.  The finite-section truncation of \eqref{eq:PGM} is given by
\begin{align}\label{eq:FS}
    \vec L_N^{\rm FS}= \vec L_N^{\rm FS}(a_0,\ldots,a_k) := \vec Q_N^T\begin{bmatrix} \vec S \vec E_{-1}(\lambda,k) + \vec T \vec E_{1}(\lambda,k) \\
    \vec L \end{bmatrix}\vec Q_N \check{\vec u}_N = \begin{bmatrix}
        \vec b\\
        f_0 \\
        \vdots \\
        f_{N-k-1}
    \end{bmatrix}.
\end{align}
We perform a comparison of
\begin{align*}
     {\vec N}_j(a_j) :=\vec Q_N^T\begin{bmatrix} \vec S \vec E_{-1}(\lambda,k) + \vec T \vec E_{1}(\lambda,k) \\
    \vec M(a_j;k + \lambda)\vec C_{j + \lambda \to k + \lambda} \vec D_j(\lambda) \end{bmatrix} \vec Q_N,
    \end{align*}
    and
\begin{align*}
    \begin{bmatrix} \vec S \vec E_{-1}(\lambda,k) + \vec T \vec E_{1}(\lambda,k) \\
    a_j(P)\vec P_{\lambda + j \to P} \vec D_j(\lambda) \end{bmatrix} \vec Q_N.
\end{align*}
But this cannot occur directly as the range of the latter is function values and the former is coefficients. So, instead consider
\begin{align*}
   \tilde {\vec N}_j(a_j) &:=\begin{bmatrix} \id_k & \vec 0\\ \vec 0 & \vec F_{N-k}(\mu_{\lambda + k}) \end{bmatrix} \begin{bmatrix} \vec S \vec E_{-1}(\lambda,k) + \vec T \vec E_{1}(\lambda,k) \\
    a_j(P)\vec P_{\lambda + k \to P} \vec D_j(\lambda) \end{bmatrix} \vec Q_N\\
    & = \begin{bmatrix} \id_k & \vec 0\\ \vec 0 & \vec F_{N-k}(\mu_{\lambda + k}) a_j(P) \vec F_{N-k}(\mu_{\lambda + k})^{-1} \end{bmatrix} \begin{bmatrix} \vec S \vec E_{-1}(\lambda,k) + \vec T \vec E_{1}(\lambda,k) \\
    \vec F_{N-k}(\mu_{\lambda + k})\vec P_{\lambda + k \to P} \vec C_{\lambda +j \to \lambda + k}\vec D_j(\lambda) \end{bmatrix} \vec Q_N.
\end{align*}

We follow the right preconditioning step as in \cite{Olver2013} and define
\begin{align*}
    \vec Z = \begin{bmatrix} \id_k & \vec 0\\ 
    \vec 0 & \vec D_k(\lambda) \end{bmatrix}.
\end{align*}
There exists a constant $c_\lambda > 1$ such that the $n$th diagonal entry $z_{nn}$ of $\vec Z$ satisfies
\begin{align*}
    c_\lambda^{-1} n^k \leq |z_{nn}| \leq c_\lambda n^k, \quad c_\lambda  >0.
\end{align*}
We then set $\vec Z_N$ to be the upper-left $N \times N$ subblock of $\vec Z$.  The next theorem and its corollary are proved in Appendix~\ref{sec:defer}.

\begin{theorem}\label{t:main}
Suppose $f \in C^{q,\alpha}(\mathbb I)$ and $q + \alpha > 2 + s$.
Then for $t \geq 0$ and $ m < N$
\begin{align}
    \left\|\begin{bmatrix} \id_k & \vec 0 \\ \vec 0 & \bDelta^{(s)}_{N-k} \end{bmatrix} (\vec N_j(\mathcal I_m^{\rm Ch} f) - \tilde {\vec N}_j(\mathcal I_m^{\rm Ch} f)) \vec Z_N^{-1} \bDelta_N^{(-s - t)}\right\|_{\ell^2} = O(m (N - m)^{j-k-t}),
\end{align}
with the difference vanishing identically if $f$ is constant and $j = k$.
\end{theorem}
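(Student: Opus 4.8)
Write $a:=\mathcal I_m^{\rm Ch}f$, a polynomial of degree at most $m-1$, and $n:=N-k$; recall $P$ consists of the roots of $p_n(\diamond;\lambda+k)$. The plan is to recognise $\vec N_j(a)-\tilde{\vec N}_j(a)$ as a pure Gaussian-quadrature (aliasing) error confined to an $O(m)\times O(m)$ corner of the matrix, to bound its entries there by $O(1)$ using the aliasing estimate, and then to extract the decay in $N-m$ from the right preconditioner $\vec Z_N^{-1}$ together with the differentiation operator $\vec D_j(\lambda)$.

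\emph{Step 1 (reduction to a quadrature error).} The boundary block $\vec S\vec E_{-1}(\lambda,k)+\vec T\vec E_1(\lambda,k)$ occupies the top $k$ rows of both $\vec N_j(a)$ and $\tilde{\vec N}_j(a)$ unchanged, so it cancels in the difference and the top $k$ rows of the matrix in the statement vanish identically. From the second displayed form of $\tilde{\vec N}_j$ in the text, the bottom $n$ rows of $\vec N_j(a)-\tilde{\vec N}_j(a)$ equal $\mathcal E\,\vec C_{\lambda+j\to\lambda+k}\vec D_j(\lambda)\vec Q_N$, where
\[
\mathcal E := \vec Q_n^{T}\vec M(a;\lambda+k)-\vec F_n(\mu_{\lambda+k})\,a(P)\,\vec P_{\lambda+k\to P},
\]
$\vec Q_n^{T}$ being truncation to the first $n$ coordinates; indeed $\vec F_n(\mu_{\lambda+k})\,a(P)\,\vec F_n(\mu_{\lambda+k})^{-1}$ times $\vec F_n(\mu_{\lambda+k})\vec P_{\lambda+k\to P}$ equals $\vec F_n(\mu_{\lambda+k})\,a(P)\,\vec P_{\lambda+k\to P}$. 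Reading off the entries, $\mathcal E_{i,\ell}$ is exactly the error of the $n$-point Gaussian rule for $\mu_{\lambda+k}$ applied to $a\,p_{\ell-1}(\diamond;\lambda+k)\,p_{i-1}(\diamond;\lambda+k)$.

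\emph{Step 2 (localisation and entrywise bound).} That integrand has degree at most $(m-1)+(i-1)+(\ell-1)$ and the $n$-point rule is exact to degree $2n-1$, so $\mathcal E_{i,\ell}=0$ unless $i+\ell\ge 2n-m+3$; together with $i\le n$ this forces $\ell\ge n-m+3$, and since $\vec C_{\lambda+j\to\lambda+k}\vec D_j(\lambda)\vec Q_N$ has only $N$ columns and bandwidth $O(k)$, only the columns $\ell\le N-j=n+k-j$ of $\mathcal E$ survive the product. Hence the effective support of $\mathcal E$ is a triangular region inside an $O(m)\times O(m)$ window just below $(n,n)$; in particular, if $f$ is constant ($\deg a=0$) and $j=k$, then $\vec C_{\lambda+k\to\lambda+k}=\id$ and $\mathcal E\,\vec D_k(\lambda)\vec Q_N$ only reaches columns $\ell\le N-k=n$ of $\mathcal E$, all of which are $0$, so the difference vanishes identically. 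On the corner I would bound $|\mathcal E_{i,\ell}|=O(1)$: the term $[\vec M(a;\lambda+k)]_{i,\ell}$ is at most $\|\vec M(a;\lambda+k)\|_{\ell^2}=O(1)$ uniformly in $m$ by Corollary~\ref{c:bounded} with $s=0$ (using $q+\alpha>2$); and writing $\vec F_n(\mu_{\lambda+k})=\vec U_n\vec W_n$ from \eqref{eq:FN} with $\vec U_n$ orthogonal and $\vec W_n$ diagonal gives $\vec F_n(\mu_{\lambda+k})\,a(P)\,\vec F_n(\mu_{\lambda+k})^{-1}=\vec U_n\,a(P)\,\vec U_n^{T}$ of norm $\|a(P)\|_\infty=O(1)$, while the proposition preceding \eqref{eq:column-estimate} (resting on Lemma~\ref{l:aliasing}) shows the columns of $\vec F_n(\mu_{\lambda+k})\vec P_{\lambda+k\to P}$ indexed by $n<\ell\le n+k-j$ have only $O(k)$ nonzero entries, each $O(1)$, hence $\ell^2$-norm $O(1)$. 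Thus the corner block of $\mathcal E$ has Frobenius norm $O(m)$.

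\emph{Step 3 (assembly).} Tracing the $j$-fold index shift of $\vec D_j(\lambda)$, the columns $p$ of $\mathcal E\,\vec C_{\lambda+j\to\lambda+k}\vec D_j(\lambda)\vec Q_N$ that can be nonzero lie in $[N-m-O(k),N]$; there the $j$th superdiagonal entries of $\vec D_j(\lambda)\vec Z_N^{-1}$ behave like $p^{j}p^{-k}\le (N-m)^{j-k}$ (since $j\le k$), $\bDelta_N^{(-s-t)}$ contributes $p^{-s-t}$, $\bDelta_{N-k}^{(s)}$ multiplies the $O(m)$ active rows by at most $n^{s}$, and $\vec C_{\lambda+j\to\lambda+k}$ is bounded on $\ell^2$. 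Multiplying these factors against the $O(m)$ bound on the corner of $\mathcal E$ gives the claimed $O\big(m\,(N-m)^{j-k-t}\big)$ once the index windows of the three weight matrices are aligned. I expect Steps 2--3 to be the main obstacle: pinning down the triangular support of $\mathcal E$ after composition with the banded right factor, and lining up the ranges of $\bDelta_{N-k}^{(s)}$, $\bDelta_N^{(-s-t)}$ and $\vec Z_N^{-1}$ so that the powers of $N-m$ (rather than $N$) come out cleanly — the only slack being a harmless factor $((N-k)/(N-m))^{s}$ when $m$ is comparable to $N$, which is immaterial in the intended regime $m=o(N)$.
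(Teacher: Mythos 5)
Your reduction is the same one the paper makes: the top $k$ rows cancel, the bottom block of $\vec N_j-\tilde{\vec N}_j$ factors as (quadrature-error matrix) $\times$ $\vec C_{\lambda+j\to\lambda+k}\vec D_j(\lambda)\vec Q_N$, and the error matrix $\mathcal E$ you isolate is exactly what the paper decomposes into the four terms $\check{\vec M}_{11}\vec A_k\check{\vec S}_{21}+\vec A_m\check{\vec M}_{21}\vec A_k\check{\vec S}_{21}-\check{\vec M}_{12}\check{\vec S}_{21}+\vec A_m\check{\vec M}_{21}\check{\vec S}_{11}$. Your Step~1, the support analysis of $\mathcal E$ and the explanation of the vanishing case $j=k$, $f$ constant, all check out.

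The gap is in the assembly (Step~3), and it is not the harmless artifact you suggest. You bound $|\mathcal E_{i,\ell}|=O(1)$ entrywise and then multiply by the worst-case weight factors; but for the columns $n-m+3\le\ell\le n$ of $\mathcal E$ — which are nonzero as soon as $\deg a>0$ — the weight ratio $i^{s}\ell^{-s}$ genuinely reaches $(n/(n-m+3))^{s}$, and the entrywise $O(1)$ bound on $\mathcal E$ ignores the cancellation needed to absorb it. What saves the paper is that in exactly this block $\mathcal E$ factors as $-\vec A'_m\vec M_{21}$ with intermediate index in $\{n+1,\dots,n+m\}$: the $\bDelta^{(-s)}$ weight is inserted there, so the aliasing columns $\check{\vec a}_{n+1},\dots,\check{\vec a}_{n+m}$ carry factors $i^{s}(n+r)^{-s}\le 1$ (hence \eqref{eq:column-estimate} gives $\|\vec A_m\|_{\rm F}=O(m)$ with no inflation), and $\check{\vec M}_{21}$ is a block of $\bDelta^{(s)}\vec M\bDelta^{(-s)}$ whose operator norm is $O(1)$ by Corollary~\ref{c:bounded}. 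Your "$|\mathcal E_{i,\ell}|\le|[\vec M]_{i,\ell}|+|[\vec F_n a(P)\vec P]_{i,\ell}|$" triangle inequality throws away the cancellation $\vec M_{11}-(\vec M_{11}+\vec A'_m\vec M_{21})=-\vec A'_m\vec M_{21}$, and no amount of "aligning index windows" recovers it; the spurious factor $\bigl((N-k)/(N-m)\bigr)^{s}$ that you acknowledge is therefore intrinsic to your bookkeeping, whereas it is absent from the theorem's claim, which is asserted uniformly for all $m<N$. To close the gap you must carry out the same block decomposition the paper does — split $\mathcal E$ into the $\ell\le n$ piece $-\vec A'_m\vec M_{21}$ and the $\ell>n$ piece, insert the weights at column index $n+r$ rather than at $\ell$, and use operator norms of the scaled $\vec M$-blocks instead of entrywise bounds — which is essentially rebuilding the paper's four-term expansion.

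Two smaller points. First, you need the aliasing estimate (Lemma~\ref{l:aliasing}) not only for columns $n<\ell\le n+k-j$ of $\vec F_n\vec P$ but also to obtain \eqref{eq:column-estimate}, which is the ingredient your Step~3 actually needs; citing it only for the high columns undersells its role. Second, the statement that "the columns of $\vec F_n\vec P$ indexed by $n<\ell\le n+k-j$ have only $O(k)$ nonzero entries" is correct (column $\ell$ has at most $\ell-n-1\le k-j-1$ nonzero entries by exactness to degree $2n-1$), and this step is fine.
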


% We make the observation that
% \begin{align*}
%     \begin{bmatrix} \id_k & \vec 0 \\
%     \vec 0 & \bDelta^{{s}} \end{bmatrix} = \bDelta^{{s}} \left[\bDelta^{{-s}} \begin{bmatrix} \id_k & \vec 0 \\
%     \vec 0 & \bDelta^{{s}} \end{bmatrix}\right],
% \end{align*}
% and the $(j,j)$ entry of the right-most matrix is
% \begin{align*}
%     \begin{cases} j^{-s} & 1 \leq j \leq k,\\
%     \left(\frac{1}{1-k/j}\right)^{-s} & k < j. \end{cases}
% \end{align*}
% which is bounded above and below by an $s$-dependent constant.

And to state the following corollary, we need to introduce some additional notation.  For $f: \mathbb I \to \mathbb C$ set
\begin{align*}
    \vec f_N = \begin{bmatrix} \langle f, p_0(\diamond;\lambda + k) \rangle_{\mu} \\ \vdots \\ \langle f, p_{N-1}(\diamond;\lambda + k) \rangle_{\mu} \end{bmatrix}, \quad \tilde{\vec f}_N = \begin{bmatrix} \langle f, p_0(\diamond;\lambda + k) \rangle_{\mu,N} \\ \vdots \\ \langle f, p_{N-1}(\diamond;\lambda + k) \rangle_{\mu,N} \end{bmatrix}, \quad \mu = \mu_{k + \lambda}.
\end{align*}

\begin{corollary}\label{c:main}
Suppose that $m = m(N) = o(N)$, $a_j \in C^{q,\alpha}(\mathbb I)$ and $q + \alpha > 2 + s$, $s \geq 0$.  Suppose also that there exists $N_0> 0$, $C > 0$ such that for $N > N_0$, 
$$\vec L_N^{\rm FS} = \vec L_N^{\rm FS}(\mathcal I_m^{\rm Ch} a_0,\ldots,\mathcal I_m^{\rm Ch} a_{k-1}, 1),$$
is invertible and $\|\vec Z_N {\vec L_N^{\rm FS}}^{-1}\|_{\ell^2_s} < C$.  Then for $N$ sufficiently large 
$$\vec L_N^{\rm C}(\mathcal I_m^{\rm Ch} a_0,\ldots,\mathcal I_m^{\rm Ch} a_{k-1}, 1),$$
is invertible, where the collocation nodes are chosen as the roots of $p_{N-k}(x;\lambda + k)$.  If $s$ is sufficiently large\footnote{This can be easily found using Lemma~\ref{l:opgrowth} and in Theorem~\ref{t:MAIN} we will impose more stringent conditions.} so that $\vec E_{\pm 1}(\lambda,k)$ is bounded from $\ell_{s+k}^2$ to $\mathbb C^k$, then the solution $\tilde{\vec u}_N$ of \eqref{eq:coll} satisfies
\begin{align*}
%\|\vec Z_N(\vec u_N - \tilde{\vec u}_N)\|_{\ell^2_s} = O \left( m N^{-1-t} \|\vec Z_N\vec u_N\|_{\ell^2_{t + s}} + \|\vec f_{N-k} - \tilde{\vec f}_{N-k}\|_{\ell^2_s} \right).\\
 \|\vec u_N - \tilde{\vec u}_N\|_{\ell^2_{s+k}} = O \left( m N^{-1-t} \|\vec w_N\|_{\ell^2_{t + s}} + \|\vec f_{N-k} - \tilde{\vec f}_{N-k}\|_{\ell^2_s} \right).
\end{align*}
\end{corollary}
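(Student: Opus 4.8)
The plan is to transport the collocation system \eqref{eq:coll} into coefficient space, compare it there with the finite-section system \eqref{eq:FS} via Theorem~\ref{t:main}, deduce invertibility by a Neumann-series argument, and then read off the error bound. First I would left-multiply \eqref{eq:coll} (with the truncated coefficients $\mathcal I_m^{\rm Ch}a_0,\ldots,\mathcal I_m^{\rm Ch}a_{k-1},1$) by the invertible block matrix $\diag(\id_k,\vec F_{N-k}(\mu_{\lambda+k}))$, calling the new operator $\tilde{\vec L}_N^{\rm C}$. By \eqref{eq:FN} the right-hand side becomes $\begin{bmatrix}\vec b \\ \tilde{\vec f}_{N-k}\end{bmatrix}$, and since $\vec F_{N-k}$ acts as $\id_k$ on the boundary rows and $\vec P_{\lambda+j\to P}=\vec P_{\lambda+k\to P}\vec C_{\lambda+j\to\lambda+k}$, one has
\begin{align*}
  \vec R_N := \tilde{\vec L}_N^{\rm C} - \vec L_N^{\rm FS} = \sum_{j=0}^{k-1}\big(\tilde{\vec N}_j(\mathcal I_m^{\rm Ch}a_j) - \vec N_j(\mathcal I_m^{\rm Ch}a_j)\big),
\end{align*}
the $j=k$ term vanishing because $a_k\equiv 1$ (Theorem~\ref{t:main}) and the boundary rows cancelling in the difference. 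Because $\diag(\id_k,\vec F_{N-k})$ is invertible, invertibility of $\vec L_N^{\rm C}(\mathcal I_m^{\rm Ch}a_0,\ldots,\mathcal I_m^{\rm Ch}a_{k-1},1)$ is equivalent to that of $\tilde{\vec L}_N^{\rm C}=\vec L_N^{\rm FS}+\vec R_N$, and the solutions coincide.

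Next I would estimate $\vec R_N$. Applying Theorem~\ref{t:main} term by term (each $a_j\in C^{q,\alpha}(\mathbb I)$, $q+\alpha>2+s$), the dominant contribution is $j=k-1$, so
\begin{align*}
  \left\|\begin{bmatrix}\id_k & \vec 0 \\ \vec 0 & \bDelta^{(s)}_{N-k}\end{bmatrix}\vec R_N\,\vec Z_N^{-1}\bDelta_N^{(-s-t)}\right\|_{\ell^2} = O\big(m(N-m)^{-1-t}\big) = O\big(mN^{-1-t}\big),
\end{align*}
using $m=m(N)=o(N)$ to replace $N-m$ by $N$. Since $\diag(\id_k,\bDelta^{(s)}_{N-k})$ differs from $\bDelta^{(s)}_N$ by a bounded factor and $|z_{nn}|\asymp n^k$ lets $\vec Z_N$ and $\vec Z_N^{-1}$ identify $\ell^2_{s+k}$ with $\ell^2_s$ up to the constant $c_\lambda$, this rewrites as $\|\vec R_N\vec Z_N^{-1}\|_{\ell^2_{s+t}\to\ell^2_s}=O(mN^{-1-t})$ and, after absorbing the $N^t$ operator norm of the truncated weight $\bDelta^{(t)}_N$, as $\|\vec R_N\|_{\ell^2_{s+k}\to\ell^2_s}=O(mN^{-1})$. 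The hypothesis $\|\vec Z_N{\vec L_N^{\rm FS}}^{-1}\|_{\ell^2_s}<C$ gives $\|{\vec L_N^{\rm FS}}^{-1}\|_{\ell^2_s\to\ell^2_{s+k}}\le c_\lambda C$ (here $\ell^2_{s+k}\to\ell^2_s$ is the natural pair because, for $s$ large, Lemma~\ref{l:opgrowth} makes $\vec E_{\pm1}(\lambda,k):\ell^2_{s+k}\to\mathbb C^k$ bounded). Hence, writing $\tilde{\vec L}_N^{\rm C}=\vec L_N^{\rm FS}(\id+{\vec L_N^{\rm FS}}^{-1}\vec R_N)$,
\begin{align*}
  \|{\vec L_N^{\rm FS}}^{-1}\vec R_N\|_{\ell^2_{s+k}\to\ell^2_{s+k}} \le c_\lambda C\cdot O(mN^{-1}) = O(mN^{-1})\longrightarrow 0,
\end{align*}
so for $N$ large the Neumann series converges, $\id+{\vec L_N^{\rm FS}}^{-1}\vec R_N$ is invertible on $\ell^2_{s+k}$ with inverse of norm $\le2$, and therefore $\tilde{\vec L}_N^{\rm C}$, and hence $\vec L_N^{\rm C}(\mathcal I_m^{\rm Ch}a_0,\ldots,\mathcal I_m^{\rm Ch}a_{k-1},1)$, is invertible.

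Finally, with $\vec u_N$ the solution of \eqref{eq:FS} (right-hand side $\begin{bmatrix}\vec b \\ \vec f_{N-k}\end{bmatrix}$) and $\tilde{\vec u}_N$ the solution of \eqref{eq:coll}, I would subtract and rearrange to get
\begin{align*}
  \vec u_N - \tilde{\vec u}_N = {\vec L_N^{\rm FS}}^{-1}\vec R_N\tilde{\vec u}_N + {\vec L_N^{\rm FS}}^{-1}\begin{bmatrix}\vec 0 \\ \vec f_{N-k} - \tilde{\vec f}_{N-k}\end{bmatrix}.
\end{align*}
The second term is $O(\|\vec f_{N-k}-\tilde{\vec f}_{N-k}\|_{\ell^2_s})$; for the first, write $\vec R_N\tilde{\vec u}_N=(\vec R_N\vec Z_N^{-1})(\vec Z_N\tilde{\vec u}_N)$, set $\vec w_N:=\vec Z_N\tilde{\vec u}_N$ (the right-preconditioned solution; one could equally use $\vec Z_N\vec u_N$, which differs by a term of size $O(mN^{-1})\|\vec u_N-\tilde{\vec u}_N\|_{\ell^2_{s+k}}$ absorbed on the left), and use $\|\vec R_N\vec Z_N^{-1}\|_{\ell^2_{s+t}\to\ell^2_s}=O(mN^{-1-t})$ to bound this term by $O(mN^{-1-t}\|\vec w_N\|_{\ell^2_{t+s}})$. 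Summing the two contributions gives the asserted estimate.

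The step I expect to be the main obstacle is the weighted bookkeeping in the comparison step: pushing the estimate of Theorem~\ref{t:main} through exactly the right sequence of weight changes so that the stray $N^t$ from truncating $\bDelta^{(t)}$ is cancelled by the $(N-m)^{-1-t}$ decay, leaving an $O(mN^{-1})\to0$ perturbation, while keeping the operator norms between $\ell^2_s$, $\ell^2_{s+t}$ and $\ell^2_{s+k}$ consistent using only the crude diagonal-growth bound $|z_{nn}|\asymp n^k$ for the right preconditioner. Once that perturbation is controlled in the correct norm, the invertibility and the error estimate follow from a routine Neumann-series / stability argument.
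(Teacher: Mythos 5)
Your argument is correct and follows essentially the same route as the paper: left-precondition the collocation system by $\diag(\id_k,\vec F_{N-k}(\mu_{\lambda+k}))$ to obtain $\tilde{\vec L}_N^{\rm FS}$ (your $\tilde{\vec L}_N^{\rm C}$), compare to $\vec L_N^{\rm FS}$ via Theorem~\ref{t:main}, deduce invertibility by a perturbation argument, and subtract to read off the error bound. The only cosmetic difference is that the paper works with the shifted operators $\vec K_N, \tilde{\vec K}_N$ and resolves through $(\id+\tilde{\vec K}_N)^{-1}$ (equivalently $\vec Z_N\tilde{\vec L}_N^{\rm FS\,-1}$) rather than your Neumann-series factorization $\vec L_N^{\rm FS}(\id+{\vec L_N^{\rm FS}}^{-1}\vec R_N)$ — an equivalent bookkeeping choice — and in the paper the vector $\vec w_N$ appearing in the estimate is $\vec Z_N\vec u_N$ (the scaled finite-section solution), which you correctly note is interchangeable with $\vec Z_N\tilde{\vec u}_N$ up to an absorbable $O(mN^{-1})$ term.
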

\begin{proof}%[Proof of Corollary~\ref{c:main}]
    We note that
    \begin{align*}
        \|\vec A\|_{\ell_s^2} %= \sup_{\vec u \in \ell^2_s(\mathbb N), ~\vec u \neq \vec 0} \frac{\|\vec A \vec u\|_{\ell_s^2}}{\|\vec u\|_{\ell_s^2}} = \sup_{\vec u \in \ell^2_s(\mathbb N), ~\vec u \neq \vec 0} \frac{\|\bDelta^{(s)}\vec A \vec u\|_{\ell^2}}{\|\bDelta^{(s)}\vec u\|_{\ell^2}} 
        %=  \sup_{\vec u \in \ell^2, ~\vec u \neq \vec 0} \frac{\|\bDelta^{(s)}\vec A\bDelta^{(-s)} \vec u\|_{\ell^2}}{\|\vec u\|_{\ell^2}}
        = \|\bDelta^{(s)}\vec A\bDelta^{(-s)} \vec \|_{\ell^2}.
    \end{align*}
    Set
    \begin{align}\label{eq:tildeL}
       \tilde{\vec L}_N^{\rm FS} = \tilde{\vec L}_N^{\rm FS}(\mathcal I_m^{\rm Ch} a_0,\ldots,\mathcal I_m^{\rm Ch} a_{k-1}, 1) =  \begin{bmatrix} \id_k & \vec 0 \\
        \vec 0 & \vec F_{N-k}(\mu_{\lambda + k})
        \end{bmatrix} \vec L_N^{\rm C}(\mathcal I_m^{\rm Ch} a_0,\ldots,\mathcal I_m^{\rm Ch} a_{k-1}, 1).
    \end{align}
    Then Theorem~\ref{t:main} implies that
    \begin{align*}
        \|(\tilde{\vec L}_N^{\rm FS} - {\vec L}_N^{\rm FS})\vec Z_N^{-1}\|_{\ell^2_s} = O( m N^{k -j}).
    \end{align*}
    This establishes the first claim using Theorem~\ref{t:continuity}.     Then, consider
    \begin{align*}
        \vec K_N = {\vec L}_N^{\rm FS}\vec Z_N^{-1} - \id, \quad \tilde{\vec K}_N = \tilde{\vec L}_N^{\rm FS}\vec Z_N^{-1} - \id,
    \end{align*}
    and the linear systems
    \begin{align}\label{eq:yN}
        {\vec L}_N^{\rm FS}\vec Z_N^{-1} \vec w_N = \begin{bmatrix}
            \vec b \\ \vec f_{N-k} \end{bmatrix} =: \vec y_N, \quad \tilde {\vec L}_N^{\rm FS}\vec Z_N^{-1} \tilde{\vec w}_N = \begin{bmatrix}
            \vec b \\ \tilde{\vec f}_{N-k} \end{bmatrix} =: \tilde{\vec y}_N.
    \end{align}
    Here $\vec w_N = \vec Z_N \vec u_N$, $\tilde{\vec w}_N = \tilde{\vec Z}_N \tilde{\vec u}_N$.  Therefore
    \begin{align*}
        (\id + \vec K_N) \vec w_N &= \vec y_N, \quad 
        (\id + \tilde{\vec K}_N) \tilde{\vec w}_N = \tilde{\vec y}_N,\\
        (\id + \tilde { \vec K}_N) \vec w_N &= (\tilde { \vec K}_N - { \vec K}_N) \vec w_N + \vec y_N.
    \end{align*}
    Thus
    \begin{align*}
    \vec w_N - \tilde{\vec w}_N = (\id + \tilde { \vec K}_N)^{-1}\left[(\tilde { \vec K}_N - { \vec K}_N) \vec w_N +  \vec y_N - \tilde{\vec y}_N \right].
    \end{align*}
    And therefore, for $N$ sufficiently large
    \begin{align*}
        \|\vec Z_N(\vec u_N - \tilde{\vec u}_N)\|_{\ell^2_s} \leq 2C \left( \|(\tilde { \vec K}_N - { \vec K}_N) \vec w_N\|_{\ell_s^2} + C_s \|\vec f_{N-k} - \tilde{\vec f}_{N-k}\|_{\ell^2_s} \right).
    \end{align*}
    Then we estimate, again using Theorem~\ref{t:main},
    \begin{align*}
        \|(\tilde { \vec K}_N - { \vec K}_N) \vec w_N\|_{\ell_s^2} \leq D' m (N - m)^{-1} N^{-t} \|\vec w_N\|_{\ell_{s+t}^2} = D' m (N - m)^{-1} N^{-t} \|\vec Z_N \vec u_N\|_{\ell_{s+t}^2}.
    \end{align*}
\end{proof}

% \begin{remark}\label{r:spectral}
%     It follows from classical approximation theory that if $f \in C^\infty(\mathbb I)$, then $\|\vec f_{N-k} - \tilde{\vec f}_{N-k}\|_{\ell^2_s} = O(N^{-t})$ and therefore $\|\vec y_N\|_{\ell^2_{t + s}} \leq Y_t$ for any $t > 0$.  To see this, one can either use Jackson's theorem and estimates on the Lebesgue constants for Jacobi polynomials \cite{Szego1939} or Jackson's theorem and the aliasing estimate, Lemma~\ref{l:aliasing}.  Then, suppose $a_0,\ldots,a_{k-1} \in C^\infty(\mathbb I)$ from the relation
%     \begin{align*}
%         \vec w_N = \vec y_N - \vec K_N \vec w_N,
%     \end{align*}
%     we can see that if we only suppose $\vec w_N \in \ell^2_s$, the right-hand side is in $\ell^2_{s+1}$:
%     \begin{align*}
%         \|\vec w_N\|_{\ell_{s+1}^2} \leq C_s \|\vec w_N\|_{\ell_{s}^2}  + \| \vec y_N \|_{\ell^2_{s + 1}},
%     \end{align*}
%     for some constant $C_s$.  This can be repeated indefinitely.  Therefore, in this setting, Corollary~\ref{c:main} gives ``spectral'' convergence, i.e., faster than any polynomial rate, provided that these norms of $\vec w_N$ can be bounded independent of $N$.
% \end{remark}

\subsection{Stability estimates for the ultraspherical method}\label{sec:conv-usm}

This section is concerned with how finite-section truncations of \eqref{eq:PGM} converge to the true solution.  Here, following Olver \& Townsend, after right preconditioning, abstract theory can be applied.  Consider
\begin{align*}
     \vec L : =  \vec D_k(\lambda) + \sum_{j=0}^{k-1} \vec M(a_j;k + \lambda)\vec C_{j + \lambda \to k + \lambda} \vec D_j(\lambda).
\end{align*}
Then, we write \eqref{eq:PGM} using $\vec w = \vec Z \vec u$ and define $\vec K = \vec K(a_0,\ldots,a_{k-1})$ by
\begin{align}
    (\id + \vec K) \vec w = \left( \id + \begin{bmatrix} [\vec S \vec E_{-1}(\lambda,k) + \vec T \vec E_{1}(\lambda,k) - \id_k] \vec Z^{-1} \\
    \displaystyle \sum_{j=0}^{k-1} \vec M(a_j;k + \lambda)\vec C_{j + \lambda \to k + \lambda} \vec D_j(\lambda)\vec Z^{-1} \end{bmatrix}\right) \vec w = \begin{bmatrix}
        \vec b\\
        \vec C_{\lambda \to \lambda +k} \vec f
    \end{bmatrix}.
\end{align}
So, we focus on operators
\begin{align*}
\vec M(a_j;k + \lambda)\vec C_{j + \lambda \to k + \lambda} \vec D_j(\lambda)\vec Z^{-1}.
\end{align*}
We see that $\vec D_j(\lambda)\vec Z^{-1}$ is bounded from $\ell_s^2(\mathbb N)$ to $\ell_{s +k - j}^2(\mathbb N)$.  And we use the following
\begin{lemma}
    For $t > s$, $\ell^2_t(\mathbb N)$ is compactly embedded in $\ell^2_s(\mathbb N)$. 
\end{lemma}
\begin{proof}
    Suppose $(u_j)_{j=1}^\infty = \vec u \in \ell^2_t(\mathbb N)$.  Then
    \begin{align*}
        \sum_{j=n}^\infty j^{2s} |u_j|^2 = \sum_{j=n}^\infty j^{2(s-t)} j^{2t}|u_j|^2 \leq n^{2(s-t)} \|\vec u\|_{\ell_t^2}^2.
    \end{align*}
    Thus the identity $\id: \ell^2_t(\mathbb N) \to \ell^2_s(\mathbb N)$ can be approximated by finite-dimensional (compact) projections in operator norm.  This proves the claim.
\end{proof}

The proof of the following can be found in Appendix~\ref{sec:defer}.
\begin{theorem}\label{t:inf}
Supposing $a_k = 1$, the following hold:
\begin{enumerate}
    \item If for $j = 0,1,\ldots,k-1$, $a_j \in C^{q,\alpha}(\mathbb I)$, $\alpha + q > 2 + s$, then the operator $\vec K$ is compact on $\ell^2_s(\mathbb N)$.  
    \item Suppose the boundary-value problem \eqref{eq:bvp} is uniquely solvable, $\alpha + q > 2 + s$, and $s > \lambda + k + 1/2$ then $\id + \vec K$ is invertible on $\ell_s^2(\mathbb R)$.  
    \item Given the assumptions of (2), there exists $N_0 > 0$ such that if $N > N_0$ then
    \begin{align*}
        \| \vec Z_N {\vec L_{N}^{\rm FS}}^{-1}\|_{\ell^2_s} \leq 2 \|(\id + \vec K)^{-1}\|_{\ell^2_s}.
    \end{align*}
    \item Given the assumptions of (2), there exists $c,C> 0$ such that the solution $\check {\vec u}_N$ of \eqref{eq:FS} and the solution $\vec u$ of \eqref{eq:PGM} satisfy
    \begin{align*}
        c\|\vec Z(\vec u  - \vec Q_N \vec Q_N^T \vec u) \|_{\ell^2_s} \leq \|\vec Z(\vec u  - \vec Q_N \check {\vec u}_N)\|_{\ell^2_s} \leq C \|\vec Z(\vec u  - \vec Q_N \vec Q_N^T \vec u) \|_{\ell^2_s}
    \end{align*}
    for $N > N_0$.
\end{enumerate}
\end{theorem}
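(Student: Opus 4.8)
The plan is to reduce all four assertions to the operator-theoretic skeleton of the Olver--Townsend argument together with the estimates already assembled. For \textbf{(1)}, I would split $\vec K$ into the finite-rank boundary piece $[\vec S\vec E_{-1}(\lambda,k)+\vec T\vec E_1(\lambda,k)-\id_k]\vec Z^{-1}$, which maps $\ell^2_s$ into $\mathbb C^k$ (boundedness being exactly the admissibility of $s$ via Lemma~\ref{l:opgrowth}) and is therefore compact, and the bulk $\sum_{j=0}^{k-1}\vec M(a_j;k+\lambda)\vec C_{j+\lambda\to k+\lambda}\vec D_j(\lambda)\vec Z^{-1}$. For a bulk term with $j\le k-1$, factor $\vec D_j(\lambda)\vec Z^{-1}=(\vec D_j(\lambda)\bDelta^{(-j)})(\bDelta^{(j)}\vec Z^{-1})$: the first factor is bounded on every $\ell^2_t$ by Lemma~\ref{l:diff_spaces} (it is banded with bounded entries), and the diagonal second factor has entries of size $n^{j-k}\lesssim n^{-1}$ by $c_\lambda^{-1}n^k\le|z_{nn}|\le c_\lambda n^k$, hence maps $\ell^2_s$ into $\ell^2_{s+1}$; composing with the compact embedding $\ell^2_{s+1}\hookrightarrow\ell^2_s$ shows $\vec D_j(\lambda)\vec Z^{-1}$ is compact on $\ell^2_s$. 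Since $\vec C_{j+\lambda\to k+\lambda}$ is banded with uniformly bounded entries (the $s_k(\lambda),t_k(\lambda)$ are bounded in $k$) it is bounded on $\ell^2_s$, and $\vec M(a_j;k+\lambda)$ is bounded on $\ell^2_s$ by Proposition~\ref{l:multiplication_stability}, because $a_j\in C^{q,\alpha}(\mathbb I)$ with $q+\alpha>2+s$ places its Chebyshev coefficients in $\ell^1_{s+1}$ (Jackson's theorem, as in the proof of Corollary~\ref{c:bounded}). As the compact operators form a two-sided ideal, each bulk term, hence the finite sum, is compact on $\ell^2_s$.

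For \textbf{(2)}, compactness of $\vec K$ makes $\id+\vec K$ Fredholm of index zero on $\ell^2_s$, so it suffices to prove injectivity. Given $\vec w\in\ker(\id+\vec K)$, set $\vec u=\vec Z^{-1}\vec w\in\ell^2_{s+k}$. Reading off the first $k$ rows of $(\id+\vec K)\vec w=\vec 0$ gives $\vec S\vec E_{-1}(\lambda,k)\vec u+\vec T\vec E_1(\lambda,k)\vec u=\vec 0$, and the remaining rows give $\vec L\vec u=\vec 0$ with $\vec L=\vec D_k(\lambda)+\sum_{j<k}\vec M(a_j;k+\lambda)\vec C_{j+\lambda\to k+\lambda}\vec D_j(\lambda)$. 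The crux is to promote these algebraic identities to a statement about $u(x):=\sum_j u_j p_j(x;\lambda)$: using Lemma~\ref{l:opgrowth} together with $s>\lambda+k+1/2$, the series for $u^{(0)},\dots,u^{(k)}$ converge absolutely and uniformly on $\mathbb I$, so $u\in C^k(\mathbb I)\subset H^k(\mathbb I)$, differentiation commutes with the sums, $\vec E_{\pm1}(\lambda,k)\vec u$ records the boundary data of $u$ exactly, and $\vec L\vec u$ records the $p_\cdot(\diamond;\lambda+k)$-coefficients of $\mathcal L u\in C(\mathbb I)\subset L^2(\mu_{\lambda+k})$; completeness of the orthonormal system in $L^2(\mu_{\lambda+k})$ then forces $\mathcal Lu\equiv 0$. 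Thus $u$ solves \eqref{eq:bvp} with $\vec b=\vec 0$, $f=0$, and unique solvability yields $u\equiv 0$, hence $\vec u=\vec 0$ and $\vec w=\vec 0$. This lifting step — making the index threshold $s>\lambda+k+1/2$ do exactly the work of placing $u$ in $H^k$ with valid pointwise and boundary meaning — is the part I expect to require the most care.

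For \textbf{(3)} and \textbf{(4)}, I would use the identity $\vec L_N^{\rm FS}\vec Z_N^{-1}=\vec Q_N^T(\id+\vec K)\vec Q_N$ (valid because $\vec Z$ is diagonal, so $\vec Q_N\vec Z_N^{-1}=\vec Z^{-1}\vec Q_N$), i.e., $\vec L_N^{\rm FS}\vec Z_N^{-1}$ is the restriction to $\ran\vec P_N$ of $\vec P_N+\vec P_N\vec K\vec P_N$ with $\vec P_N=\vec Q_N\vec Q_N^T$. Compactness of $\vec K$ and $\vec P_N\to\id$ strongly give $\|\vec P_N\vec K\vec P_N-\vec K\|_{\ell^2_s}\to0$; for $N$ beyond some $N_0$ a Neumann-series perturbation of $(\id+\vec K)^{-1}$ (which exists by (2) under the stated hypotheses) shows $\vec L_N^{\rm FS}$ is invertible and $\|\vec Z_N{\vec L_N^{\rm FS}}^{-1}\|_{\ell^2_s}=\|(\vec P_N+\vec P_N\vec K\vec P_N)^{-1}\|_{\ran\vec P_N}\le 2\|(\id+\vec K)^{-1}\|_{\ell^2_s}$, which is (3). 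For (4), put $\vec w=\vec Z\vec u$ and $\vec v_N=\vec Q_N\check{\vec w}_N=\vec Z\vec Q_N\check{\vec u}_N\in\ran\vec P_N$; then $\vec P_N(\id+\vec K)\vec v_N=\vec P_N\vec y=\vec P_N(\id+\vec K)\vec w$ with $\vec y$ the right-hand side of (the preconditioned) \eqref{eq:PGM}, so $\vec P_N(\id+\vec K)(\vec v_N-\vec P_N\vec w)=\vec P_N\vec K(\id-\vec P_N)\vec w$. Applying the uniformly bounded inverse on $\ran\vec P_N$ and $\|\vec K(\id-\vec P_N)\|_{\ell^2_s}\to0$ gives $\|\vec v_N-\vec P_N\vec w\|_{\ell^2_s}\le\varepsilon_N\|(\id-\vec P_N)\vec w\|_{\ell^2_s}$ with $\varepsilon_N\to0$, and the triangle inequality in both directions sandwiches $\|\vec v_N-\vec w\|_{\ell^2_s}=\|\vec Z(\vec u-\vec Q_N\check{\vec u}_N)\|_{\ell^2_s}$ between $(1-\varepsilon_N)$ and $(1+\varepsilon_N)$ times $\|(\id-\vec P_N)\vec w\|_{\ell^2_s}=\|\vec Z(\vec u-\vec Q_N\vec Q_N^T\vec u)\|_{\ell^2_s}$; enlarging $N_0$ so that $\varepsilon_N\le 1/2$ gives (4) with $c=1/2$, $C=2$.
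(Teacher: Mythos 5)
Your argument is correct and follows essentially the same path as the paper's: boundedness of $\vec M(a_j;k+\lambda)$ via Corollary~\ref{c:bounded} together with the compact embedding $\ell^2_{s+1}\hookrightarrow\ell^2_s$ for compactness, Fredholm alternative plus lifting to $H^k(\mathbb I)$ and unique solvability for invertibility, and then the standard projection-method machinery (encapsulated in the paper's Theorem~\ref{t:proj}) for parts (3) and (4). You are somewhat more explicit than the paper about the finite-rank boundary block in (1) and the two-sided compression $\vec P_N(\id+\vec K)\vec P_N$ in (3) versus the paper's one-sided $\id+\vec Q_N\vec Q_N^T\vec K$, but these lead to the same finite matrix and the same bounds, so the substance is identical.
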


While this proves convergence of the finite section method applied to \eqref{eq:PGM}, this method is, in principle, unimplementable because the operators $\vec M(a_j,\lambda)$ cannot be computed exactly unless $a_j$ is a polynomial.  So, we now prove a straightforward stability lemma about the replacement of these functions with polynomial approximations. It is a direct consequence of Lemma~\ref{l:multiplication_stability}.

\begin{lemma}\label{l:ultra_stable}
    Suppose
    \begin{align*}
        a_j(x) = \sum_{i = 0}^\infty a_{j,i} p_j(x;0), \quad \tilde a_j(x) = \sum_{i = 0}^\infty \tilde a_{j,i} p_j(x;0), \quad j = 0,1,\ldots, k-1,
    \end{align*}
    for coefficients satisfying $\vec a_j = (a_{j,i})_{i \geq 0}$, $\tilde{\vec a}_j = (\tilde a_{j,i})_{i \geq 0}$, $\|\vec a_j - \tilde{\vec a}_j\|_{\ell^1_{s+1}} < \epsilon$,
    then there exists $C > 0$ such that
    \begin{align*}
        \|\vec K(\tilde a_0,\ldots,\tilde a_{k-1}) - \vec K(a_0,\ldots,a_{k-1}) \|_{\ell^2_s} < C \epsilon.
    \end{align*}
\end{lemma}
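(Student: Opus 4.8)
The plan is to reduce everything to Proposition~\ref{l:multiplication_stability} by chasing the fixed (coefficient‑independent) operators through the scale of weighted spaces $\ell^2_t(\mathbb N)$. First I would observe that the first $k$ rows of $\vec K(a_0,\ldots,a_{k-1})$, namely those coming from $[\vec S\vec E_{-1}(\lambda,k)+\vec T\vec E_1(\lambda,k)-\id_k]\vec Z^{-1}$, do not depend on the coefficient functions and therefore cancel in the difference, leaving
\begin{align*}
    \vec K(\tilde a_0,\ldots,\tilde a_{k-1}) - \vec K(a_0,\ldots,a_{k-1}) = \begin{bmatrix} \vec 0 \\ \displaystyle\sum_{j=0}^{k-1}\bigl(\vec M(\tilde a_j;k+\lambda)-\vec M(a_j;k+\lambda)\bigr)\vec C_{j+\lambda\to k+\lambda}\vec D_j(\lambda)\vec Z^{-1}\end{bmatrix}.
\end{align*}
Prepending $k$ zero rows inflates the $\ell^2_s$ operator norm only by a factor depending on $k$ and $s$ (since $(i+k)^{2s}\le(1+k)^{2s}i^{2s}$ for $i\ge1$), so it suffices to bound each summand $\bigl\|\bigl(\vec M(\tilde a_j;k+\lambda)-\vec M(a_j;k+\lambda)\bigr)\vec C_{j+\lambda\to k+\lambda}\vec D_j(\lambda)\vec Z^{-1}\bigr\|_{\ell^2_s}$ for $j=0,\ldots,k-1$.

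Next I would record the mapping properties of the fixed factors. As noted in Section~\ref{sec:conv-usm} (via Lemma~\ref{l:diff_spaces} together with the growth $|z_{nn}|\asymp n^k$ of the diagonal of $\vec Z$), the operator $\vec D_j(\lambda)\vec Z^{-1}$ is bounded $\ell^2_s(\mathbb N)\to\ell^2_{s+k-j}(\mathbb N)$. The connection operator $\vec C_{j+\lambda\to k+\lambda}$ is a product of $k-j$ banded matrices $\vec C_{\mu\to\mu+1}$ whose only nonzero entries are the uniformly bounded quantities $s_m(\mu)$ and $t_m(\mu)$; conjugating such a matrix by $\bDelta^{(t)}$ leaves its entries bounded, so $\vec C_{j+\lambda\to k+\lambda}$ is bounded on $\ell^2_t(\mathbb N)$ for every $t$, in particular on $\ell^2_{s+k-j}$. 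Since $j\le k-1$ gives $s+k-j\ge s$, the inclusion $\ell^2_{s+k-j}(\mathbb N)\hookrightarrow\ell^2_s(\mathbb N)$ is continuous. Composing, $\vec C_{j+\lambda\to k+\lambda}\vec D_j(\lambda)\vec Z^{-1}$ is bounded as a map $\ell^2_s(\mathbb N)\to\ell^2_s(\mathbb N)$ with a bound depending only on $\lambda,k,j,s$.

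Finally, $\vec M(\,\cdot\,;k+\lambda)$ is linear in its function argument, so $\vec M(\tilde a_j;k+\lambda)-\vec M(a_j;k+\lambda)=\vec M(\tilde a_j-a_j;k+\lambda)$, and Proposition~\ref{l:multiplication_stability} (with its parameter $\lambda$ replaced by $k+\lambda$, and with the second function taken to be $0$) shows this operator is bounded on $\ell^2_s(\mathbb N)$ with norm at most $C\|\tilde{\vec a}_j-\vec a_j\|_{\ell^1_{s+1}}<C\epsilon$. Multiplying the two bounds and summing over $j=0,\ldots,k-1$ gives the claim. There is no real obstacle here — the lemma is genuinely a corollary of Proposition~\ref{l:multiplication_stability} — but the one point that must be arranged correctly is the bookkeeping of weighted exponents: the multiplication‑difference operator is controlled on $\ell^2_s$ precisely by the $\ell^1_{s+1}$‑norm of the coefficient difference, which is exactly the quantity assumed small, so the chain must be composed so that $\vec M(\tilde a_j-a_j;k+\lambda)$ is applied on $\ell^2_s$ itself; the extra smoothing supplied by $\vec D_j(\lambda)\vec Z^{-1}$ (which actually lands in the smaller space $\ell^2_{s+k-j}$) is discarded through the embedding rather than exploited, so no stronger weighted norm of $\tilde{\vec a}_j-\vec a_j$ is ever required.
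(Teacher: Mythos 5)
Your argument is correct, and since the paper offers no proof beyond the one-line remark that the lemma ``is a direct consequence of Lemma~\ref{l:multiplication_stability},'' your write-up is precisely the bookkeeping the paper leaves implicit: cancel the $a_j$-independent boundary rows, route $\vec D_j(\lambda)\vec Z^{-1}$ and the banded connection matrices through the weighted $\ell^2$ scale, embed back into $\ell^2_s$, and apply Proposition~\ref{l:multiplication_stability} (with $g=0$, using linearity of $q\mapsto\vec M(q;\cdot)$) to the difference of the multiplication operators. This matches the paper's intent exactly.
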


\subsection{Stability estimates for the collocation method}\label{sec:stab-coll}

The last piece of the theory to prove convergence of the collocation method is to, at the level of collocation, establish how small perturbations in the coefficient functions $a_j$ can affect the norm of the resulting linear system.   The following is proved in Appendix~\ref{sec:defer}.

\begin{proposition}\label{p:coll_stable}
    Let $\tilde {\vec L}_N^{\rm FS}$ be as in \eqref{eq:tildeL} and suppose $s  > k + \lambda + 1/2$.  Then there exists a constant $C_{k,\lambda,s}$ such that
    \begin{align*}
        \|(\tilde {\vec L}_N^{\rm FS}(a_0,\ldots,a_{k-1},1) - \tilde {\vec L}_N^{\rm FS}(\tilde a_0,\ldots,\tilde a_{k-1},1)) \vec Z_N^{-1} \|_{\ell_s^2} \leq C_{k,\lambda,s} \max_j \|a_j - \tilde a_j\|_\infty N^s.
    \end{align*}
\end{proposition}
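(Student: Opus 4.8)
The plan is to reduce the claim to estimating a difference of multiplication operators, and then leverage the aliasing structure captured by $\vec F_{N-k}(\mu_{\lambda+k})$ together with Lemma~\ref{l:opgrowth}. Recall from \eqref{eq:tildeL} that
\[
\tilde{\vec L}_N^{\rm FS}(a_0,\ldots,a_{k-1},1) = \begin{bmatrix} \id_k & \vec 0 \\ \vec 0 & \vec F_{N-k}(\mu_{\lambda+k}) \end{bmatrix} \vec L_N^{\rm C}(a_0,\ldots,a_{k-1},1),
\]
and that $\vec L_N^{\rm C}$ depends on the $a_j$ only through the diagonal matrices $a_j(P) = \diag(a_j(x_1),\ldots,a_j(x_{N-k}))$ appearing in the blocks $a_j(P)\vec P_{\lambda+j\to P}\vec D_j(\lambda)$. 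Since the boundary rows $\vec S\vec E_{-1}(\lambda,k)+\vec T\vec E_1(\lambda,k)$ and the leading term ($j=k$, $a_k\equiv 1$) do not involve the $a_j$, the difference $\tilde{\vec L}_N^{\rm FS}(a_0,\ldots) - \tilde{\vec L}_N^{\rm FS}(\tilde a_0,\ldots)$ is supported on the non-boundary rows and equals
\[
\sum_{j=0}^{k-1} \vec F_{N-k}(\mu_{\lambda+k}) \, (a_j(P) - \tilde a_j(P)) \, \vec P_{\lambda+j\to P}\vec D_j(\lambda)\vec Q_N .
\]
So it suffices to bound each summand, right-multiplied by $\vec Z_N^{-1}$ and conjugated by the weights $\bDelta_N^{(s)}$ (on the left, restricted to the last $N-k$ coordinates) and $\bDelta_N^{(-s)}$ (on the right).

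Next I would estimate the three factors separately. First, $\|a_j(P) - \tilde a_j(P)\|_{\ell^2} = \max_i |a_j(x_i) - \tilde a_j(x_i)| \le \|a_j - \tilde a_j\|_\infty$, since it is diagonal. Second, I would control $\bDelta_N^{(s)}\vec F_{N-k}(\mu_{\lambda+k})$: using $\vec F_{N-k}=\vec P_{N-k}\vec W_{N-k}^2$ from \eqref{eq:FN}, the $(\ell,i)$ entry of $\vec F_{N-k}$ is $p_\ell(x_i;\lambda+k)\,w_i$, which by Lemma~\ref{l:opgrowth} and $w_i = O(1/N)$ (Lemma~\ref{l:weights} with the extreme-root asymptotics giving the $O(1)$ error term, and $\sum_i w_i = 1$) is $O(N^{-1}\ell^{\lambda+k})$; weighting the row index by $\ell^s$ and using $\ell \le N$ this is $O(N^{s+\lambda+k-1})$ per entry, so the Frobenius (hence operator) norm of $\bDelta_{N-k}^{(s)}\vec F_{N-k}(\mu_{\lambda+k})$ is $O(N^{s+\lambda+k})$ — though a cleaner route is to note $\vec F_{N-k} = \vec U_{N-k}\vec W_{N-k}$ is, up to the diagonal $\vec W_{N-k}$ of size $O(N^{-1/2})$ entries, orthogonal, so its $\ell^2\to\ell^2$ norm is $O(N^{-1/2})$, and then $\|\bDelta_{N-k}^{(s)}\vec F_{N-k}\|_{\ell^2} \le (N-k)^s\,O(N^{-1/2}) = O(N^{s-1/2})$. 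Third, I would bound $\vec P_{\lambda+j\to P}\vec D_j(\lambda)\vec Q_N\vec Z_N^{-1}\bDelta_N^{(-s)}$: since $\vec Z_N^{-1}$ kills $N^k$ and $\vec D_j(\lambda)$ contributes about $N^j$ and $\bDelta_N^{(-s)}$ contributes $N^{-s}$ on the columns — but here one must be careful, because $\vec P_{\lambda+j\to P}$ has entries $p_\ell(x_i;\lambda+j)$ which by Lemma~\ref{l:opgrowth} grow like $\ell^{\lambda+j}$. The composition $\vec P_{\lambda+j\to P}\vec D_j(\lambda)$ is exactly the matrix mapping the coefficient vector of $u$ to the values $(u^{(j)}(x_i))_i$ for a degree-$<N$ polynomial; applying Lemma~\ref{l:opgrowth} entrywise and summing gives, after the $\vec Z_N^{-1}$ and weight factors, a polynomial-in-$N$ bound.

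Assembling the three estimates, each summand is bounded by a constant (depending on $k,\lambda,s$) times $\|a_j-\tilde a_j\|_\infty$ times a fixed power of $N$; the hypothesis $s > k+\lambda+1/2$ is exactly what is needed to keep that power at $N^s$ (the $-1/2$ from $\vec W$ offsets the $\lambda+k$ growth from the polynomial evaluations, and the $\vec Z_N^{-1}$ offsets the differentiation). Summing over the finitely many $j = 0,\ldots,k-1$ and taking the max over $j$ on the right gives the claimed bound $C_{k,\lambda,s}\max_j\|a_j-\tilde a_j\|_\infty N^s$. The main obstacle I anticipate is the bookkeeping in the third factor: one has to track precisely how the competing powers of $N$ from $\vec P_{\lambda+j\to P}$ (growth $\sim N^{\lambda+j}$), $\vec D_j(\lambda)$ (growth $\sim N^j$), $\vec Z_N^{-1}$ (decay $\sim N^{-k}$), and the two weight matrices interact, and to verify that the worst case is $j$ near $k-1$ and still lands within $N^s$ under the stated hypothesis on $s$ — this is where a loose bound could easily produce $N^{s+\epsilon}$ instead of $N^s$, so the exponents in Lemmas~\ref{l:opgrowth}, \ref{l:weights} and the definition of $\vec Z$ must be combined sharply rather than crudely.
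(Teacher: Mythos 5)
Your decomposition and factor-by-factor bounds match the paper's proof: the diagonal factor gives $\|a_j - \tilde a_j\|_\infty$, the $\vec F_{N-k} = \vec U_{N-k}\vec W_{N-k}$ route gives $\|\bDelta_{N-k}^{(s)}\vec F_{N-k}\|_{\ell^2} = O(N^{s-1/2})$, and the evaluation/differentiation factor is $O(N^{1/2})$ in Frobenius norm via Lemma~\ref{l:opgrowth}; the one cosmetic difference is that the paper first inserts $\vec P_{\lambda+j\to P}\vec D_j(\lambda) = \vec P_{\lambda+k\to P}\vec C_{\lambda+j\to\lambda+k}\vec D_j(\lambda)$, so that a single $j$-independent matrix $\vec P_{\lambda+k\to P}\bDelta_N^{(-s)}$ is bounded (and $s>k+\lambda+1/2$ is exactly what makes $\sum_i\sum_\ell \ell^{2(k+\lambda-s)} = O(N)$), while $\bDelta_N^{(s)}\vec C_{\lambda+j\to\lambda+k}\vec D_j(\lambda)\vec Z_N^{-1}\bDelta_N^{(-s)}$ is bounded automatically as a $\bDelta$-conjugated banded matrix with uniformly bounded entries. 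The bookkeeping you worried about does close in your direct version too: the $\ell$-th column of $\vec P_{\lambda+j\to P}\vec D_j(\lambda)\vec Z_N^{-1}\bDelta_N^{(-s)}$ has entries $O(\ell^{\lambda+2j-k-s})$ for $j\le k-1$, so its squared Frobenius norm is $O(N)$ already under the weaker condition $s>\lambda+k-3/2$, and the product of the three factors gives exactly $N^{s-1/2}\cdot N^{1/2} = N^s$.
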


In applying the previous proposition, we note that there is a restriction from Theorem~\ref{t:inf} that $\alpha + q > 2 +s$.  Classical results imply (see \cite{atkinson}, for example) that, for $m > 1$
\begin{align*}
    \|a_j - \mathcal I_m^{\rm Ch} a_j\|_\infty \leq C \frac{\log m}{m^{q + \alpha}}.
\end{align*}
For the bound in the previous proposition will need to tend to zero, while maintaining $m \ll N$, from Corollary~\ref{c:main}, if suffices to take $m  = \lfloor N^\gamma \rfloor$, $\gamma = s/(2 + s)$.

\subsection{The main theorem}\label{sec:main}

The theorem that follows is the main result of this paper.  The constants involved can surely be optimized beyond what is presented here.  Some constants are kept to show the reader that (1) only a finite amount of smoothness of the coefficient functions is required for convergence and (2) how an infinite amount of smoothness results in beyond-all-orders, or spectral, convergence, see Corollary~\ref{c:MAIN}.

\begin{theorem}\label{t:MAIN}
Suppose the following hold:
\begin{enumerate}
    \item $s > \lambda + k + 1/2$,
    \item $a_k = 1$ in \eqref{eq:bvp} and the boundary-value problem \eqref{eq:bvp} is uniquely solvable, and
    \item  $f \in C^{q,\alpha}(\mathbb I)$,   $a_j \in C^{q,\alpha}(\mathbb I)$, $j = 0,1,\ldots,k-1$, $\alpha + q > 2 + s + t$, $t \geq 0$.
\end{enumerate}
Then with $m = \lfloor N^{s/(2 + s)} \rfloor$
\begin{align*}
    \|\vec u - \vec Q_N \tilde {\vec u}_N\|_{\ell^2_{s+k}} &= O\left( \|\vec Z(\vec u  - \vec Q_N \vec Q_N^T \vec u) \|_{\ell^2_s} + N^s\max_{j}\|a_j - \mathcal I_m^{\rm Ch} a_j\|_\infty\|\tilde{\vec y}_{N}\|_{\ell^2_{s}} \right.\\
    & \left.+ \max_j\|\vec a_j - \tilde{\vec a}_j\|_{\ell^1_{s + 1}}\|{\vec y}\|_{\ell^2_{s}} + m N^{-1 -t}\|{\vec y}\|_{\ell^2_{s+t}} + \|\vec f_{N-k} - \tilde{\vec f}_{N-k}\|_{\ell^2_s} \right),
\end{align*}
where
\begin{align*}
    {\vec a}_j = \begin{bmatrix} \langle a_j, p_0(\diamond;0) \rangle_{\mu_{0}} \\ 
     \langle a_j, p_1(\diamond;0) \rangle_{\mu_{0}}\\
     \vdots\\ \end{bmatrix}, \quad \tilde {\vec a}_j = \vec Q_m \vec Q_m^T \vec a_j.
\end{align*}
\end{theorem}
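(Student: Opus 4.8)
The plan is to assemble Theorem~\ref{t:MAIN} from the pieces developed in Sections~\ref{sec:comp}--\ref{sec:stab-coll} by a triangle-inequality argument that inserts a chain of intermediate discrete problems between the true solution $\vec u$ and the computed collocation solution $\tilde{\vec u}_N$. First I would fix the collocation nodes as the roots of $p_{N-k}(\diamond;\lambda+k)$ and introduce four objects: (i) the true solution $\vec u$ of \eqref{eq:PGM}; (ii) the exact finite-section solution $\check{\vec u}_N$ of \eqref{eq:FS} with the \emph{true} coefficient functions; (iii) the finite-section solution with the truncated coefficients $\mathcal I_m^{\rm Ch} a_j$, call it $\check{\vec u}_N^{(m)}$; and (iv) the actual collocation solution $\tilde{\vec u}_N$, which by \eqref{eq:tildeL} solves the system $\tilde{\vec L}_N^{\rm FS}(\mathcal I_m^{\rm Ch} a_0,\ldots,1)\vec Z_N^{-1}\tilde{\vec w}_N = \tilde{\vec y}_N$ after left-multiplying by $\mathrm{diag}(\id_k,\vec F_{N-k}(\mu_{\lambda+k}))$. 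The norm $\|\vec u - \vec Q_N\tilde{\vec u}_N\|_{\ell^2_{s+k}}$ is then bounded by $\|\vec u - \vec Q_N\vec Q_N^T\vec u\|$ plus three telescoping differences, and each difference is controlled by one of the earlier results.

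The key steps, in order, are as follows. Step~1: bound $\|\vec Z(\vec u - \vec Q_N\check{\vec u}_N)\|_{\ell^2_s}$ by $C\|\vec Z(\vec u - \vec Q_N\vec Q_N^T\vec u)\|_{\ell^2_s}$ using Theorem~\ref{t:inf}(4); this is where the unique-solvability hypothesis and $s > \lambda+k+1/2$ enter, guaranteeing $\id+\vec K$ is invertible and the finite sections are eventually invertible with uniformly bounded inverses. Step~2: bound the effect of truncating the coefficient functions at the operator level, i.e.\ $\|\check{\vec u}_N - \check{\vec u}_N^{(m)}\|$, by combining Lemma~\ref{l:ultra_stable} (which gives $\|\vec K(\tilde a_0,\ldots) - \vec K(a_0,\ldots)\|_{\ell^2_s} < C\epsilon$ with $\epsilon = \max_j\|\vec a_j - \tilde{\vec a}_j\|_{\ell^1_{s+1}}$) with a standard perturbation-of-$(\id+\vec K)$ estimate, producing the term $\max_j\|\vec a_j - \tilde{\vec a}_j\|_{\ell^1_{s+1}}\|\vec y\|_{\ell^2_s}$. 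Step~3: bound $\|\check{\vec u}_N^{(m)} - \tilde{\vec u}_N\|$ by \emph{two} sub-estimates: (3a) the difference between the truncated-coefficient finite section $\vec L_N^{\rm FS}(\mathcal I_m^{\rm Ch}a_0,\ldots,1)$ and its collocation analogue $\tilde{\vec L}_N^{\rm FS}(\mathcal I_m^{\rm Ch}a_0,\ldots,1)$, handled by Corollary~\ref{c:main} (which under $m = o(N)$ and the hypotheses of Theorem~\ref{t:inf} gives the $mN^{-1-t}\|\vec y\|_{\ell^2_{s+t}}$ and $\|\vec f_{N-k} - \tilde{\vec f}_{N-k}\|_{\ell^2_s}$ terms), and (3b) the fact that the collocation system uses $\tilde{\vec L}_N^{\rm FS}$ with the truncated coefficients rather than the true ones, which is exactly Proposition~\ref{p:coll_stable} giving $C_{k,\lambda,s}\max_j\|a_j - \tilde a_j\|_\infty N^s$ — but since this perturbation is applied inside an already-inverted system, it must be paired with a bound on the right-hand side, yielding the $N^s\max_j\|a_j - \mathcal I_m^{\rm Ch}a_j\|_\infty\|\tilde{\vec y}_N\|_{\ell^2_s}$ term. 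Step~4: choose $m = \lfloor N^{s/(2+s)}\rfloor$ so that, by the Jackson-type bound $\|a_j - \mathcal I_m^{\rm Ch}a_j\|_\infty \leq C(\log m)/m^{q+\alpha}$ together with $\alpha+q > 2+s$, the product $N^s\max_j\|a_j-\mathcal I_m^{\rm Ch}a_j\|_\infty$ tends to zero while $m \ll N$ is preserved, so that Corollary~\ref{c:main} remains applicable; finally convert the $\vec Z_N$-weighted $\ell^2_s$ estimates to $\ell^2_{s+k}$ estimates on $\vec u - \vec Q_N\tilde{\vec u}_N$ using $c_\lambda^{-1}n^k \leq |z_{nn}| \leq c_\lambda n^k$.

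The main obstacle I anticipate is the careful bookkeeping in Step~3 to ensure every perturbation estimate is applied to an \emph{invertible} operator with a \emph{uniformly bounded} inverse across all the nested problems. In particular, Corollary~\ref{c:main} assumes $\|\vec Z_N{\vec L_N^{\rm FS}}^{-1}\|_{\ell^2_s} < C$ for the \emph{truncated-coefficient} finite section; establishing this requires first applying Lemma~\ref{l:ultra_stable} and a Neumann-series argument to transfer the invertibility of $\id + \vec K(a_0,\ldots,a_{k-1})$ (from Theorem~\ref{t:inf}(2)) to $\id + \vec K(\mathcal I_m^{\rm Ch}a_0,\ldots,\mathcal I_m^{\rm Ch}a_{k-1})$ for $m$ large, then applying Theorem~\ref{t:inf}(3) to its finite sections — and one must check that the threshold $N_0$ and the bound $C$ can be taken uniform in $m$ as $m\to\infty$, which works because $\|\vec a_j - \tilde{\vec a}_j\|_{\ell^1_{s+1}} \to 0$. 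A secondary subtlety is that Proposition~\ref{p:coll_stable} carries a genuine $N^s$ factor, so the whole construction only closes because the $m = \lfloor N^{s/(2+s)}\rfloor$ choice forces $N^s\|a_j-\mathcal I_m^{\rm Ch}a_j\|_\infty \to 0$; I would highlight this balance explicitly since it is the crux of why merely finite smoothness ($\alpha + q > 2 + s + t$) suffices, and note that letting $q+\alpha \to \infty$ makes every error term decay faster than any power of $N$, which is the content of the promised Corollary~\ref{c:MAIN}.
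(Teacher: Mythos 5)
Your proposal matches the paper's proof essentially line for line: the same five-object telescoping chain ($\vec u$, $\check{\vec u}_N$, $\check{\vec u}_{N,m}$, $\tilde{\vec u}_{N,m}$, $\tilde{\vec u}_N$), the same lemma assigned to each gap (Theorem~\ref{t:inf}(4), Lemma~\ref{l:ultra_stable}, Corollary~\ref{c:main}, Proposition~\ref{p:coll_stable}), the same choice $m=\lfloor N^{s/(2+s)}\rfloor$, and the same final conversion via the diagonal weight $\vec Z$. The one step you gloss over is that Corollary~\ref{c:main} leaves the dangling factor $\|\vec Z_N\check{\vec u}_{N,m}\|_{\ell^2_{s+t}}$, which must be bounded uniformly in $N$ by re-running Steps 1--2 at weight level $s+t$ in place of $s$; this is exactly where the strengthened hypothesis $\alpha+q>2+s+t$ (rather than merely $\alpha+q>2+s$) is consumed, and it is what ultimately produces the $\|\vec y\|_{\ell^2_{s+t}}$ factor appearing in the statement.
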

\begin{proof}
    We need to define a number of solutions of linear systems:
    \begin{enumerate}
        \item $\vec u$ is the solution of the full, infinite linear system \eqref{eq:PGM}.
        \item $\tilde {\vec u}_N$ is the solution of \eqref{eq:coll}.
        \item $\check {\vec u}_N$ is the solution of the finite-section system \eqref{eq:FS}.
        \item $\check {\vec u}_{N,m}$ is the solution of \eqref{eq:FS} with $a_j$ replaced with $\mathcal I_m^{\rm Ch} a_j$ for all $j$.
        \item $\tilde {\vec u}_{N,m}$ is the solution of \eqref{eq:coll} with $a_j$ replaced with $\mathcal I_m^{\rm Ch} a_j$ for all $j$.
    \end{enumerate}
    Let us first settle the fact that these quantities are all well-defined for $N$ sufficiently large: (1) is well-defined by Theorem~\ref{t:inf}(2) and (3) is well-defined by Theorem~\ref{t:inf}(3).   Then applying Lemma~\ref{l:ultra_stable}, we see that because $\alpha + q > 2 + s$, we can also use Corollary~\ref{c:bounded} provided that $m \to \infty$.  Specifically, we choose $m = \lfloor N^{s/(2 + s)} \rfloor$.  Thus, $\check {\vec u}_{N,m}$ is well-defined. And this establishes the uniform bound needed in Corollary~\ref{c:main} that then shows  $\tilde {\vec u}_{N,m}$ is well-defined. 
    
We use the sequence of approximations as follows
\begin{align*}
     \|\vec Z(\vec u - \vec Q_N \tilde {\vec u}_N)\|_{\ell^2_s} & \leq \|\vec Z(\vec u - \vec Q_N \check {\vec u}_N)\|_{\ell^2_s} + \|\vec Z_N(\check {\vec u}_{N} - \check {\vec u}_{N,m})\|_{\ell^2_s} + \|\vec Z_N(\check {\vec u}_{N,m} - \tilde {\vec u}_{N,m})\|_{\ell^2_s} \\
     & + \|\vec Z_N(\tilde  {\vec u}_{N,m} - \tilde {\vec u}_{N})\|_{\ell^2_s}.
\end{align*}
And we bound each term individually, for sufficiently large $N$:
\begin{align*}
   \|\vec Z(\vec u - \vec Q_N \check {\vec u}_N)\|_{\ell^2_s} &\leq C \|\vec Z(\vec u  - \vec Q_N \vec Q_N^T \vec u) \|_{\ell^2_s}, \quad (\text{Theorem~\ref{t:inf}}),\\
   \|\vec Z_N(\check {\vec u}_{N} - \check {\vec u}_{N,m})\|_{\ell^2_s} &\leq C \max_j \|\vec a_j - \tilde{\vec a}_j\|_{\ell^1_{s+1}}\|\vec y_{N}\|_{\ell^2_s}, \quad (\text{Lemma~\ref{l:ultra_stable}}),\\
   \|\vec Z_N(\check {\vec u}_{N,m} - \tilde {\vec u}_{N,m})\|_{\ell^2_s} & \leq C \left( m N^{-1-t} \|\vec Z_N \check {\vec u}_{N,m}\|_{\ell^2_{t + s}} + \|\vec f_{N-k} - \tilde{\vec f}_{N-k}\|_{\ell^2_s} \right), \quad (\text{Corollary~\ref{c:main}}),\\
   \|\vec Z_N(\tilde  {\vec u}_{N,m} - \tilde {\vec u}_{N})\|_{\ell^2_s} &\leq C N^s\max_{j}\|a_j - \mathcal I_m^{\rm Ch} a_j\|_\infty \|\tilde{\vec y}_N\|_{\ell^2_s}, \quad (\text{Proposition~\ref{p:coll_stable}}).
\end{align*}
It remains to find a uniform estimate for  $\|\vec Z_N \check {\vec u}_{N,m}\|_{\ell^2_{t + s}}$.  To do this, we need to be able to repeat the first two estimates with $s$ replaced with $s + t$ to obtain
\begin{align*}
    \|\vec Z_N \check {\vec u}_{N,m}\|_{\ell^2_{t + s}} &\leq C \max_j \|\vec a_j - \tilde{\vec a}_j\|_{\ell^1_{s+t+1}}\|\vec y_{N}\|_{\ell^2_{s+t}} + \|\vec Z_N \check {\vec u}_N\|_{\ell_{s+t}^2} \\
    &\leq C'\left[  \max_j \|\vec a_j - \tilde{\vec a}_j\|_{\ell^1_{s+t+1}}\|\vec y_{N}\|_{\ell^2_{s+t}} + \|\vec Z \vec u\|_{\ell_{s+t}^2} + \|\vec Z(\vec u  - \vec Q_N \vec Q_N^T \vec u) \|_{\ell^2_{s+t}} \right].
\end{align*}
This right-hand side is finite, and uniformly bounded in $N$ by a constant times $\|\vec y\|_{\ell^2_{s + t}}$.  Lastly, we note that multiplication by $\vec Z$ gives a norm equivalent to $\ell^2_{s + k}$. 
\end{proof}

\begin{corollary}\label{c:MAIN}
    Suppose $a_k = 1$ and $a_j \in C^\infty(\mathbb T)$ for $j = 0,\ldots,k-1$, $f \in C^\infty(\mathbb T)$.  Then for every $t > 0$
    \begin{align*}
        \|\vec u - \vec Q_N \tilde {\vec u}_N\|_{\ell^2_{s + k}} \leq C_t N^{-t},
    \end{align*}
    for some constant $C_t >0$.
\end{corollary}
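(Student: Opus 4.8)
The plan is to deduce the corollary directly from Theorem~\ref{t:MAIN}: when the data are $C^\infty$, every term on the right-hand side of the estimate there is super-algebraically small in $N$, while the only factors that grow with $N$ --- a fixed power $N^s$ together with $\|\vec y\|_{\ell^2_{s+t}}$ and $\|\tilde{\vec y}_N\|_{\ell^2_s}$ --- grow at most polynomially and so are harmless.

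First I would check that $u$ is itself in $C^\infty(\mathbb I)$. Since $a_k \equiv 1$, the equation reads $u^{(k)} = f - \sum_{j=0}^{k-1} a_j u^{(j)}$, and starting from $u \in H^k(\mathbb I) \subset C^{k-1}(\mathbb I)$ a standard bootstrap --- the right-hand side gaining one derivative at each step because $f, a_0, \ldots, a_{k-1} \in C^\infty$ --- gives $u \in C^\infty(\mathbb I)$. Consequently the ultraspherical coefficients of $u$ in the $p_j(\diamond;\lambda)$ basis, of $f$ in the $p_j(\diamond;\lambda)$ and $p_j(\diamond;0)$ bases, and of each $a_j$ in the $p_j(\diamond;0)$ basis all decay faster than any negative power of the index, and the same holds for $\vec Z\vec u$ and for $\vec C_{\lambda\to\lambda+k}\vec f$ since $\vec Z$ changes the decay rate only by the fixed polynomial factor of order $n^k$ and $\vec C_{\lambda\to\lambda+k}$ has finite bandwidth. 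From this I would record, for every $r > 0$ and with $m = \lfloor N^{s/(2+s)}\rfloor$, that $\|\vec Z(\vec u - \vec Q_N\vec Q_N^T\vec u)\|_{\ell^2_s} = O(N^{-r})$ and $\max_j\|\vec a_j - \tilde{\vec a}_j\|_{\ell^1_{s+1}} = O(m^{-r}) = O(N^{-r})$ (tails of rapidly decaying sequences); that $\|\vec y\|_{\ell^2_{s+t}} < \infty$ for every $t \geq 0$; that $\|a_j - \mathcal I_m^{\rm Ch}a_j\|_\infty = O(m^{-r})$ by Jackson's theorem as quoted before Section~\ref{sec:main}; and, using that the Gauss rule of degree $2(N-k)-1$ is exact for the products $p_i p_j$ whenever $i + j \leq 2(N-k)-1$ together with the uniform bound of Lemma~\ref{l:aliasing}, that $|\langle f, p_j(\diamond;\lambda+k)\rangle_{\mu,N-k} - \langle f, p_j(\diamond;\lambda+k)\rangle_{\mu}| \leq C\sum_{i \geq N-k+1}|f_i|$ for every $j \leq N-k-1$; this last bound is uniform and super-algebraically small, so it yields $\|\vec f_{N-k} - \tilde{\vec f}_{N-k}\|_{\ell^2_s} = O(N^{-r})$ and a bound on $\|\tilde{\vec y}_N\|_{\ell^2_s}$ uniform in $N$.

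Then, given a target rate $t > 0$, I would apply Theorem~\ref{t:MAIN} with the $s$ fixed throughout Section~\ref{sec:main}, with its parameter (also written $t$ there) equal to $t$, and with $f$ and the $a_j$ viewed as elements of $C^{q,1}(\mathbb I)$ for an integer $q$ chosen once and for all large enough that both $q + 1 > 2 + s + t$ (so the theorem applies and, via Corollary~\ref{c:bounded}, all intermediate linear systems in its proof are well defined) and $\tfrac{s}{2+s}(q+1) > s + t$. With $m = \lfloor N^{s/(2+s)}\rfloor$ the first, third and fifth terms of Theorem~\ref{t:MAIN} are $O(N^{-t})$ by the estimates above; the fourth is $mN^{-1-t}\|\vec y\|_{\ell^2_{s+t}} = O(N^{s/(2+s)-1-t}) = O(N^{-t})$ because $0 < s/(2+s) < 1$; and the second is $N^s\max_j\|a_j - \mathcal I_m^{\rm Ch}a_j\|_\infty\|\tilde{\vec y}_N\|_{\ell^2_s} = O(N^{s}m^{-(q+1)}\log m) = O(N^{\,s - \frac{s}{2+s}(q+1)}\log N) = O(N^{-t})$ by the choice of $q$. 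Summing gives $\|\vec u - \vec Q_N\tilde{\vec u}_N\|_{\ell^2_{s+k}} = O(N^{-t})$, which is the claim.

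The only step that deserves genuine care is the $C^\infty$-regularity of $u$ and the attendant super-algebraic decay of its ultraspherical expansion coefficients (both classical, but they are exactly what make the corollary work); a secondary point is the weighted aliasing estimate, where one has to notice that aliasing onto an index $j \leq N-k-1$ draws only on the coefficients $f_i$ with $i \geq N-k+1$, so that the weight of order $N^s$ multiplies something already super-algebraically small. Everything else is the routine absorption of polynomial-in-$N$ factors into super-algebraically small ones.
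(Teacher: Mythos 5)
Your proof is correct and, at the level of strategy, coincides with the paper's: apply Theorem~\ref{t:MAIN} with $m=\lfloor N^{s/(2+s)}\rfloor$, observe that the data's $C^\infty$ smoothness makes all the interpolation/tail terms super-algebraically small, and choose $q$ (equivalently $q+\alpha$) large enough that the explicit $N^s$ and $mN^{-1-t}$ factors are absorbed. Your aliasing estimate for $\vec f_{N-k}-\tilde{\vec f}_{N-k}$ is the same calculation the paper carries out, and your parameter bookkeeping ($q+1>2+s+t$ and $\frac{s}{2+s}(q+1)>s+t$) is exactly what the paper leaves implicit.

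The one place you diverge is how you justify the rapid decay of the solution's coefficients. You run a classical interior regularity bootstrap on $u^{(k)}=f-\sum_{j<k}a_ju^{(j)}$ to conclude $u\in C^\infty$, then pass to coefficient decay. The paper instead stays entirely in sequence space: it notes that $\|\vec y\|_{\ell^2_{s'}}<\infty$ for every $s'$ when the data are smooth, and then invokes Theorem~\ref{t:inf}(2) with $s$ replaced by $s+t$ to conclude directly that $\vec w=(\id+\vec K)^{-1}\vec y\in\ell^2_{s+t}$, hence $\vec u\in\ell^2_{s+t+k}$, for every $t$. Both routes are valid and give the same super-algebraic decay; the paper's is marginally cleaner since it reuses machinery already established for the convergence proof and avoids re-invoking Sobolev embedding and classical elliptic regularity, while your version may be more transparent to a reader who thinks of the solution as a function rather than a coefficient sequence. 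No gaps.
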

\begin{proof}
    Following the proof of Corollary~\ref{c:bounded}
    \begin{align*}
        \max_j \|\vec a_j - \tilde{\vec a}_j\|_{\ell^1_{s+1}} &= O(m^{-q - \alpha + 2 + s}).
        \end{align*}
    From Jackson's theorem and the Lebesgue constant for $\mathcal I_m^{\rm Ch}$:
        \begin{align*}
        \max_{j}\|a_j - \mathcal I_m^{\rm Ch} a_j\|_\infty &= O( m^{-q - \alpha} \log m).
        \end{align*}
    Then we write, $\mu = \mu_{\lambda + k}$, $f_j =   \langle f, p_j(x;\lambda +k) \rangle_{\mu}$, $\tilde f_j =   \langle f, p_j(x;\lambda +k) \rangle_{\mu,N-k}$ giving
    \begin{align*}
       \tilde f_j = f_j + \sum_{\ell=N-k+1}^\infty f_\ell \langle p_\ell(\diamond;\lambda + k), p_j(\diamond;\lambda +k) \rangle_{\mu,N},
    \end{align*}
    provided this sum converges.   And more generally, we estimate, supposing $t > 1/2$, by Lemma~\ref{l:aliasing},
    \begin{align*}
        \|\vec f_{N-k} - \tilde{\vec f}_{N-k} \|_{\ell^2_s}^2  &= \sum_{j=0}^{N -k -1} |f_j - \tilde f_j|^2(j +1)^{2s} \leq C(\lambda)^2 \sum_{j=0}^{N -k -1} \left|\sum_{\ell=N-k+1}^\infty f_\ell \right|^2(j +1)^{2s}\\
        & = C(\lambda)^2 \|\vec f\|_{\ell^2_t}^2 \sum_{j=0}^{N -k -1} \left[\sum_{\ell=N-k+1}^\infty (\ell + 1)^{-2t}\right](j +1)^{2s}\\
        & \leq D \|\vec f\|_{\ell^2_t}^2 N^{2s - 2t + 2},
    \end{align*}
    for a new constant $D$ depending on $t, \lambda$. This gives
    \begin{align*}
        \|\vec f_{N-k} - \tilde{\vec f}_{N-k} \|_{\ell^2_s} \leq D \|\vec f\|_{\ell^2_t} N^{s - t + 1}.
    \end{align*}
    And for $\|\vec f\|_{\ell^2_t}$ to be finite, going back to the proof of Corollary~\ref{c:bounded},  it suffices to impose that $\alpha + q  > t + 1/2$.  This also shows that $\|\vec y\|_{\ell^2_t}$ is finite. Then we note that $\vec u \in \ell^2_{s + t}(\mathbb N)$ for every $t > 0$ because Theorem~\ref{t:inf}(2) applies with $s$ replaced with $s + t$.
\end{proof}

\section{Numerical demonstration}\label{sec:demo}

We now solve some specific differential equations to demonstrate the URC method's effectiveness. But first, we discuss the methodology for estimating errors.  As above, $N$ is the size of the linear system.  The system is solved giving the approximate coefficients.  A grid of equally spaced points is selected on $\mathbb I$ and Clenshaw's algorithm is used to evaluate the orthogonal polynomial series on the grid.  The maximum difference of these values and a reference solution evaluated on this grid determine the error.  In most cases below, the reference solution is the true solution as it can be determined explicitly.  

%We also use the right-preconditioner $\vec Z_N$ in all computations.  It does not make much difference in the errors for function values, it may give better relative errors for the individual coefficients.  
The three choices of collocation nodes we consider below are:
\begin{itemize}[align=left,leftmargin=*]
    \item[\textbf{First-kind zeros:}] These are the zeros of the Chebyshev polynomials of the first kind
    \begin{align*}
        x_j = \cos \left( \frac{2j -1}{2N} \pi \right), \quad j = 1,2,\ldots,N.
    \end{align*}
    \item[\textbf{First-kind extrema:}] These are the extrema of the Chebyshev polynomials of the first kind
    \begin{align*}
        x_j = \cos \left( \frac{j -1}{N-1} \pi \right), \quad j = 1,2,\ldots,N.
    \end{align*}
    \item[\textbf{Ultraspherical zeros:}] Given a $k$th order differential operator and $\lambda \geq 0$, $(x_j)_{j=1}^N$ are the roots of $p_N(x;k + \lambda)$.
\end{itemize}
Most of our computations are performed with $\lambda = 0$ as this seems to perform the best in practice, see the bottom panel of Figure~\ref{fig:airy}.  Recall that Theorem~\ref{t:restate} (Theorem~\ref{t:Kras}) applies to the first two choices and Theorem~\ref{t:informal} (Theorem~\ref{t:main}) applies to the last choice.

\subsection{Convergence and the choice of nodes}\label{sec:nodes}

\subsubsection{Example 1}
Consider the boundary-value problem
\begin{align}\label{eq:second-order}
    -\frac{d^2 u}{dx^2} - 25 u = 0, \quad u(-1) = 1, \quad u(1) = -1.
\end{align}
Clearly, $u(x) =  - \csc(5) \sin(5 x)$.  The convergence of the URC method for the three choices of collocation points is shown in Figure~\ref{fig:second-order}.  All choices perform well, with the ultraspherical zeros performing slightly better.
\begin{figure}[tbp]
    \centering
    \includegraphics[width=.5\linewidth]{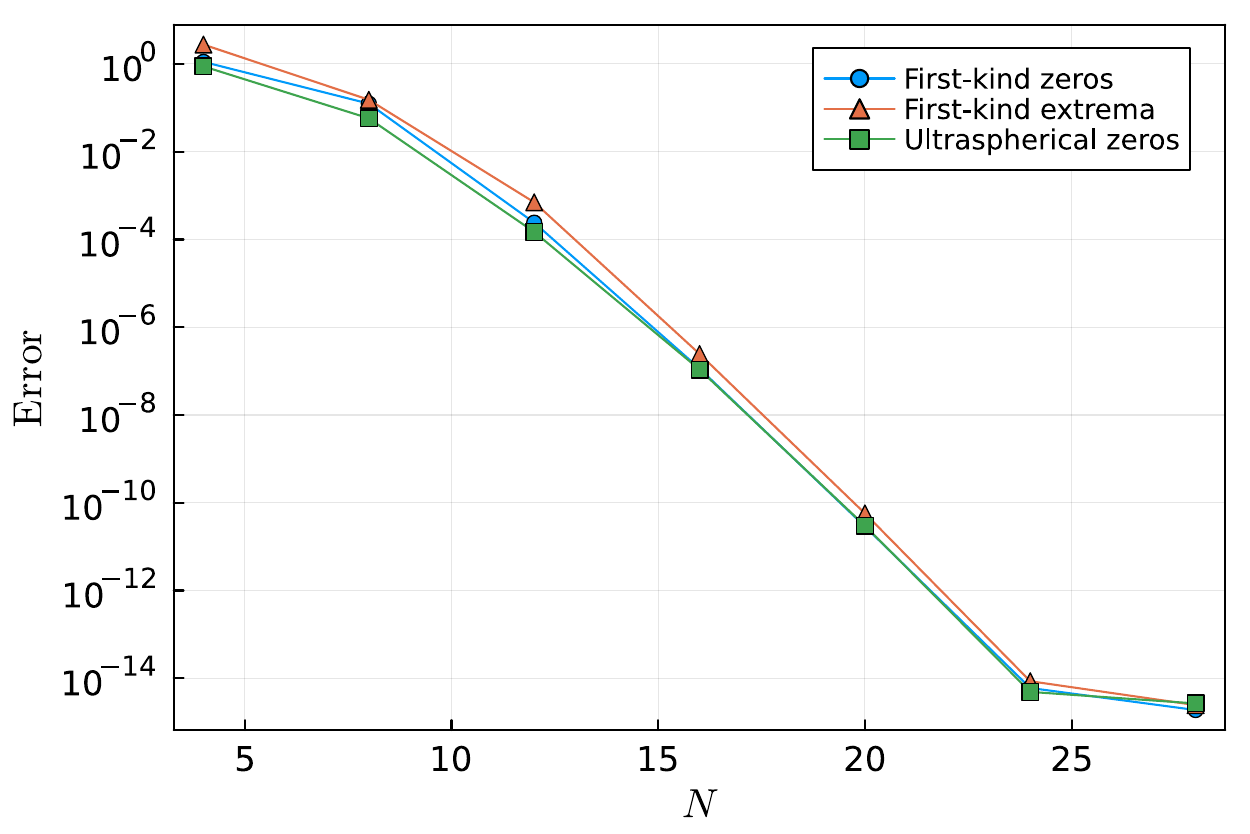}
    \caption{The convergence of the URC method applied to \eqref{eq:second-order}.}
    \label{fig:second-order}
\end{figure}

\subsubsection{Example 2}
Consider the boundary-value problem
\begin{align}\label{eq:third-order}
    -\frac{d^3 u}{dx^3} - 10000 x u = 0, \quad u(-1) = 1, \quad u(1) = -1, \quad u'(-1) = 0.
\end{align}
Here, we do not use an explicit solution, but we compute a reference solution with $N = 500$. The convergence of the URC method for the three choices of collocation points is shown in Figure~\ref{fig:third-order}.  All three choices perform well again, with the ultraspherical zeros performing slightly better initially and not as well in the intermediate regime.
\begin{figure}[tbp]
    \centering
    \begin{subfigure}[b]{0.48\textwidth}
    \centering
    \includegraphics[width=\linewidth]{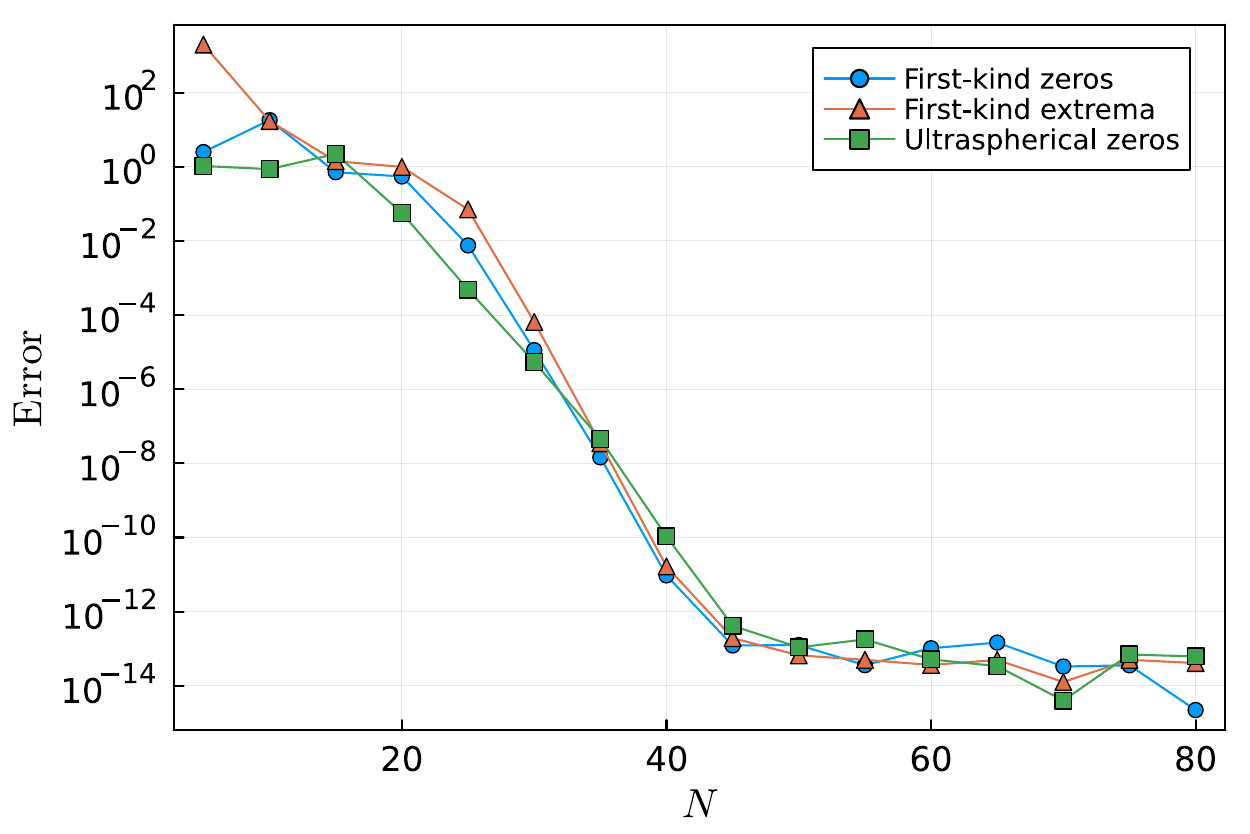}
    \caption{}
    \end{subfigure}
    \begin{subfigure}[b]{0.48\textwidth}
    \centering
    \includegraphics[width=\linewidth]{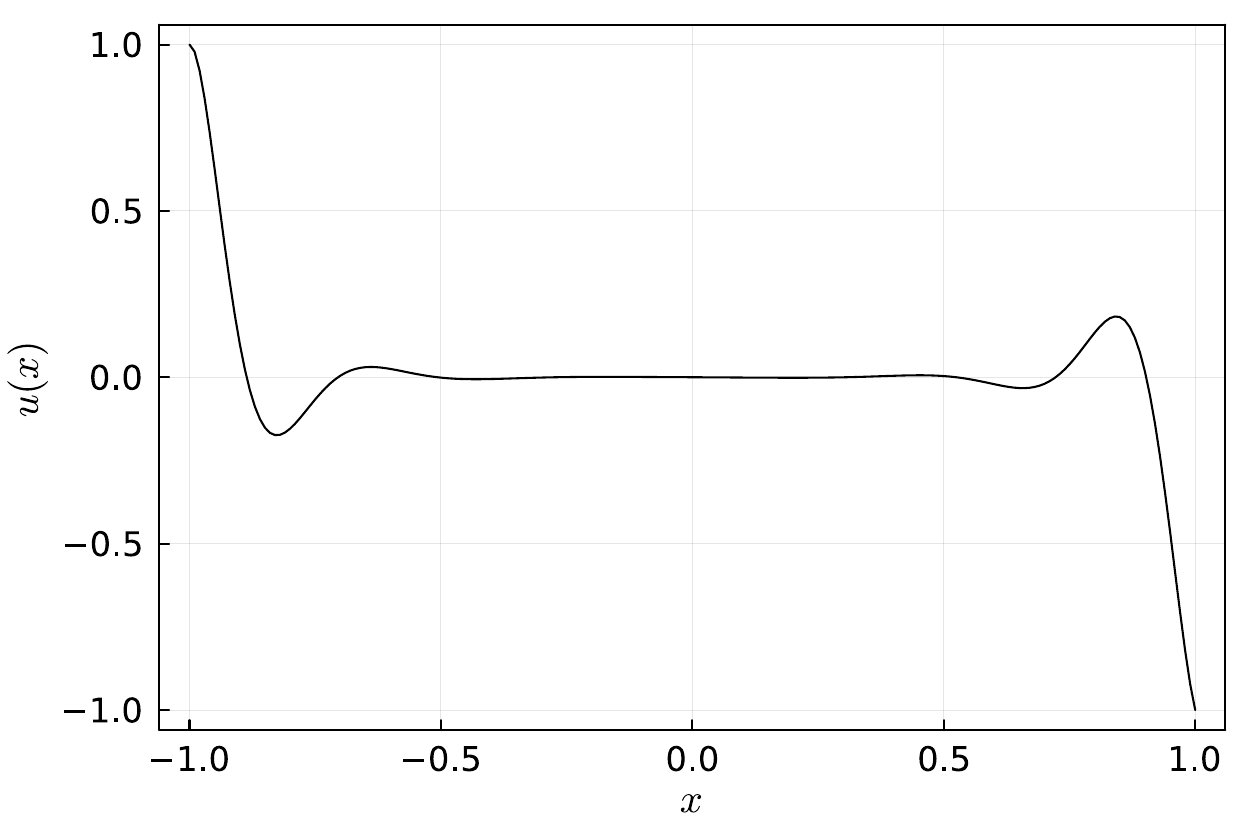}
    \caption{}
    \end{subfigure}
    \caption{(A) The convergence of the URC method applied to \eqref{eq:third-order}. (B) The solution with $N = 500$.}
    \label{fig:third-order}
\end{figure}

\subsubsection{Example 3}
Consider the boundary-value problem
\begin{align}\label{eq:boundary-layer}
    \epsilon \frac{d^2 u}{dx^2} +\frac{d u}{dx} +  u = 0, \quad u(-1) = 0, \quad u(1) = 1, \quad \epsilon = 10^{-3}.
\end{align}
The solution exhibits a boundary layer at $x = -1$.  Here, again, while we could, we do not use an explicit solution, but we compute a reference solution with $N = 1000$. The convergence of the URC method for the three choices of collocation points is shown in Figure~\ref{fig:boundary-layer}.  All three choices again perform well, with the first-kind extrema performing the worst.
\begin{figure}[tbp]
    \centering
    \begin{subfigure}[b]{0.48\textwidth}
    \centering
    \includegraphics[width=\linewidth]{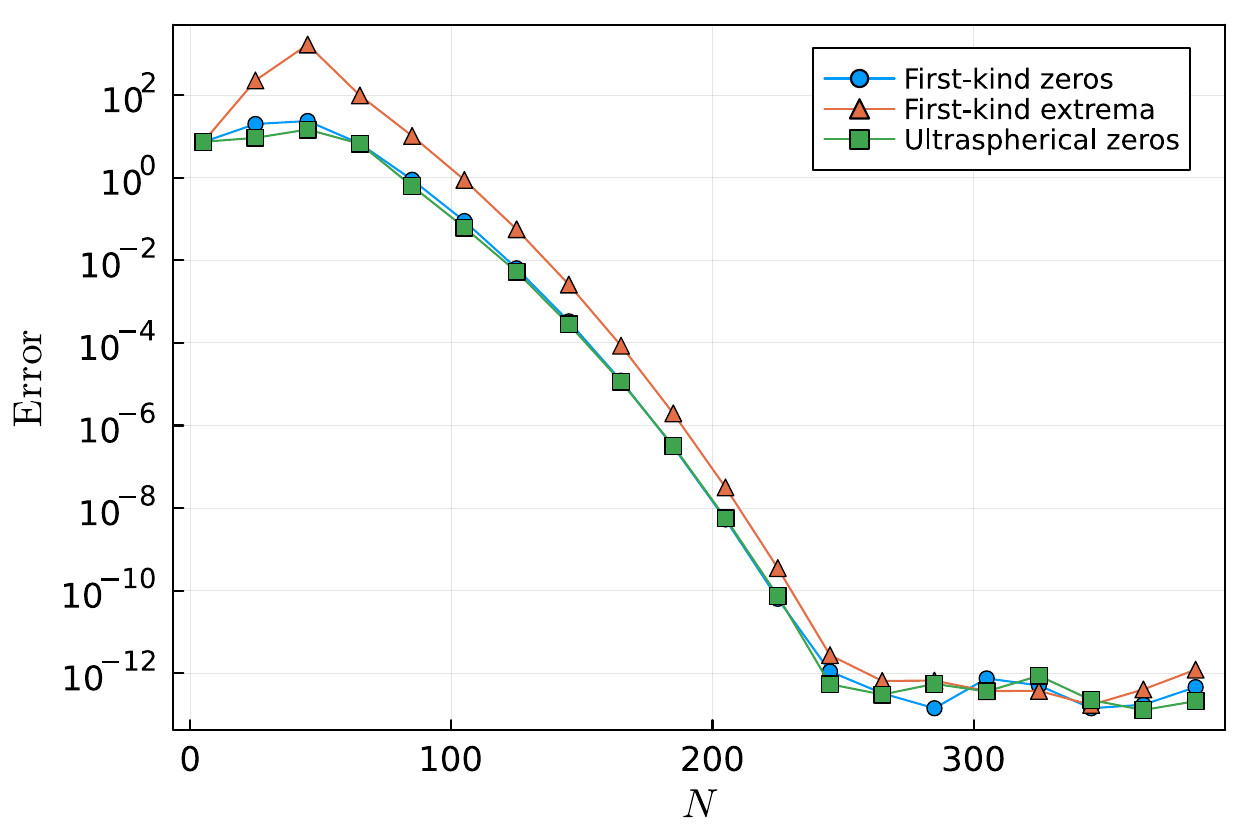}
    \caption{}
    \end{subfigure}
    \begin{subfigure}[b]{0.48\textwidth}
    \centering
    \includegraphics[width=\linewidth]{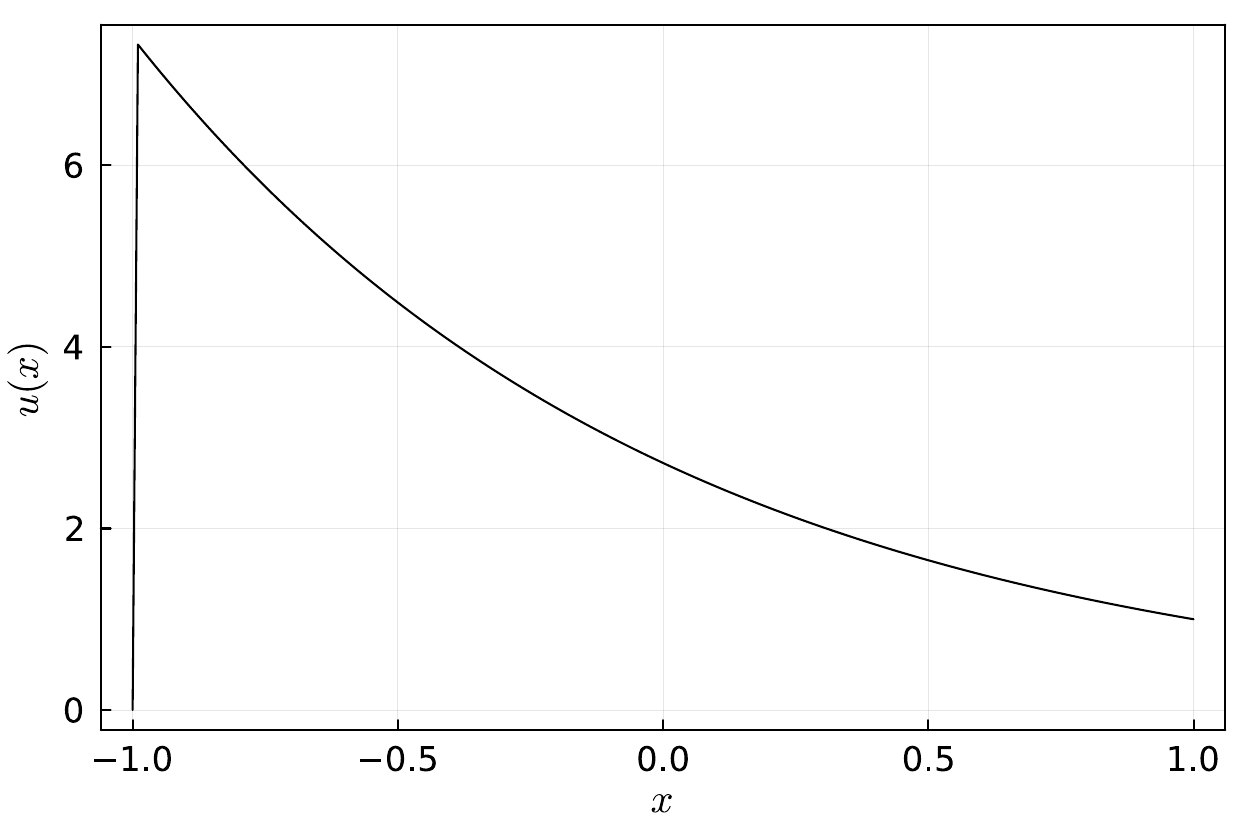}
    \caption{}
    \end{subfigure}
    \caption{(A) The convergence of the URC method applied to \eqref{eq:boundary-layer}. (B) The solution with $N = 1000$.}
    \label{fig:boundary-layer}
\end{figure}

\subsubsection{Example 4}
Consider the boundary-value problem
\begin{align}\label{eq:airy}
    \epsilon^3 \frac{d^2 u}{dx^2} - x  u = 0, \quad u(-1)={\rm Ai}(-1/\epsilon), \quad u(1) = {\rm Ai}(1/\epsilon), \quad \epsilon = 10^{-2}.
\end{align}
The solution is given by $u(x) = {\rm Ai}(x/\epsilon)$ where ${\rm Ai}$ is the Airy function \cite{DLMF}. The convergence of the URC method for the three choices of collocation points is shown in Figure~\ref{fig:airy}.  All three choices again perform well, with the ultraspherical zeros peforming the best.  We also see that $\lambda = 0$ is preferable.
\begin{figure}[tbp]
    \centering

    \begin{subfigure}[b]{0.48\textwidth}
    \centering
    \includegraphics[width=\linewidth]{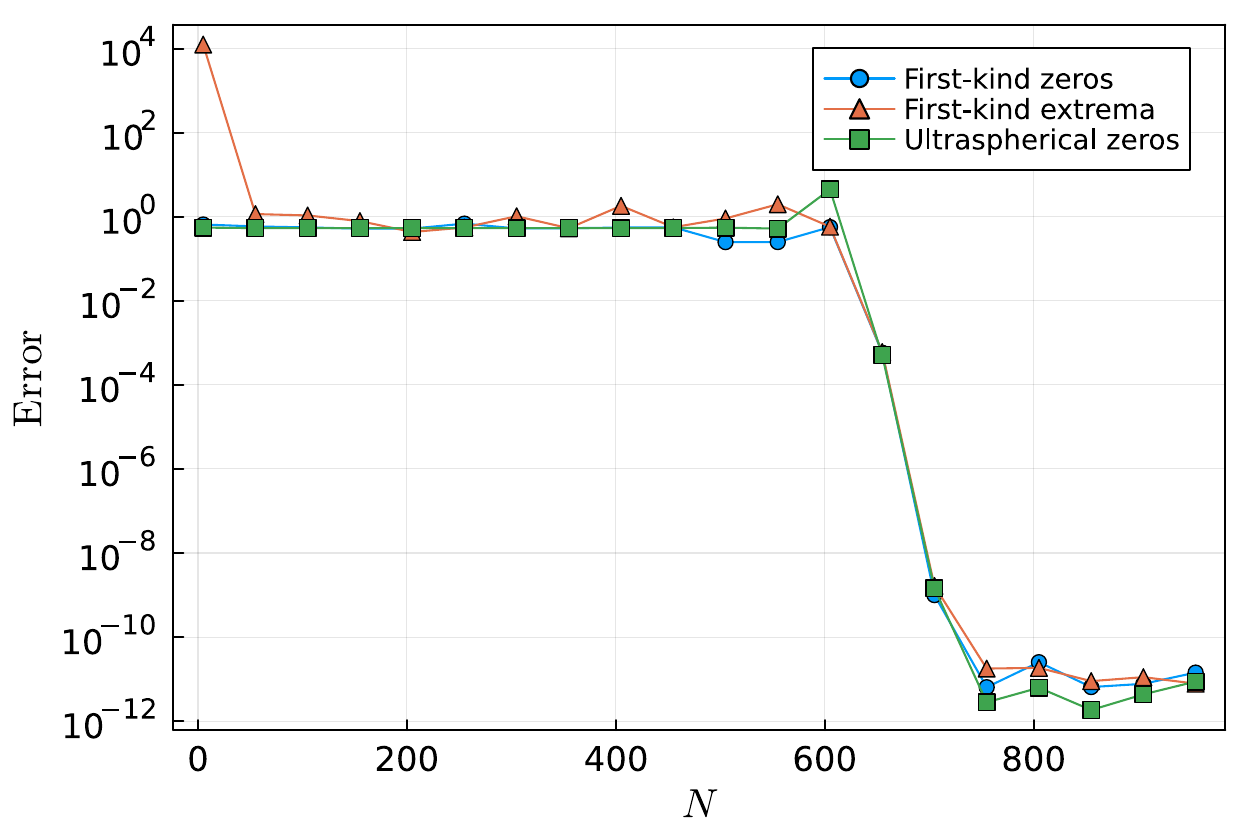}
    \caption{}
    \end{subfigure}
    \begin{subfigure}[b]{0.48\textwidth}
    \centering
    \includegraphics[width=\linewidth]{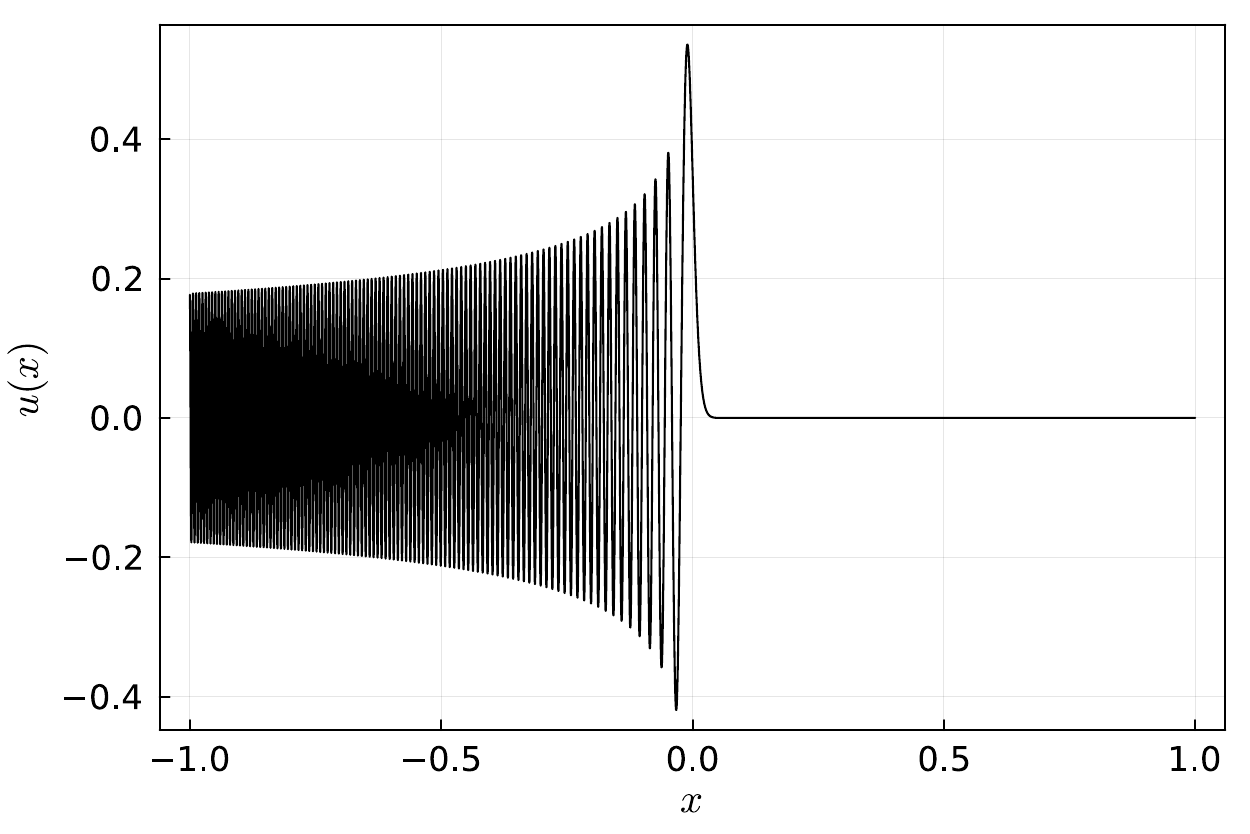}
    \caption{}
    \end{subfigure}
    \begin{subfigure}[b]{0.48\textwidth}
    \centering
    \includegraphics[width=\linewidth]{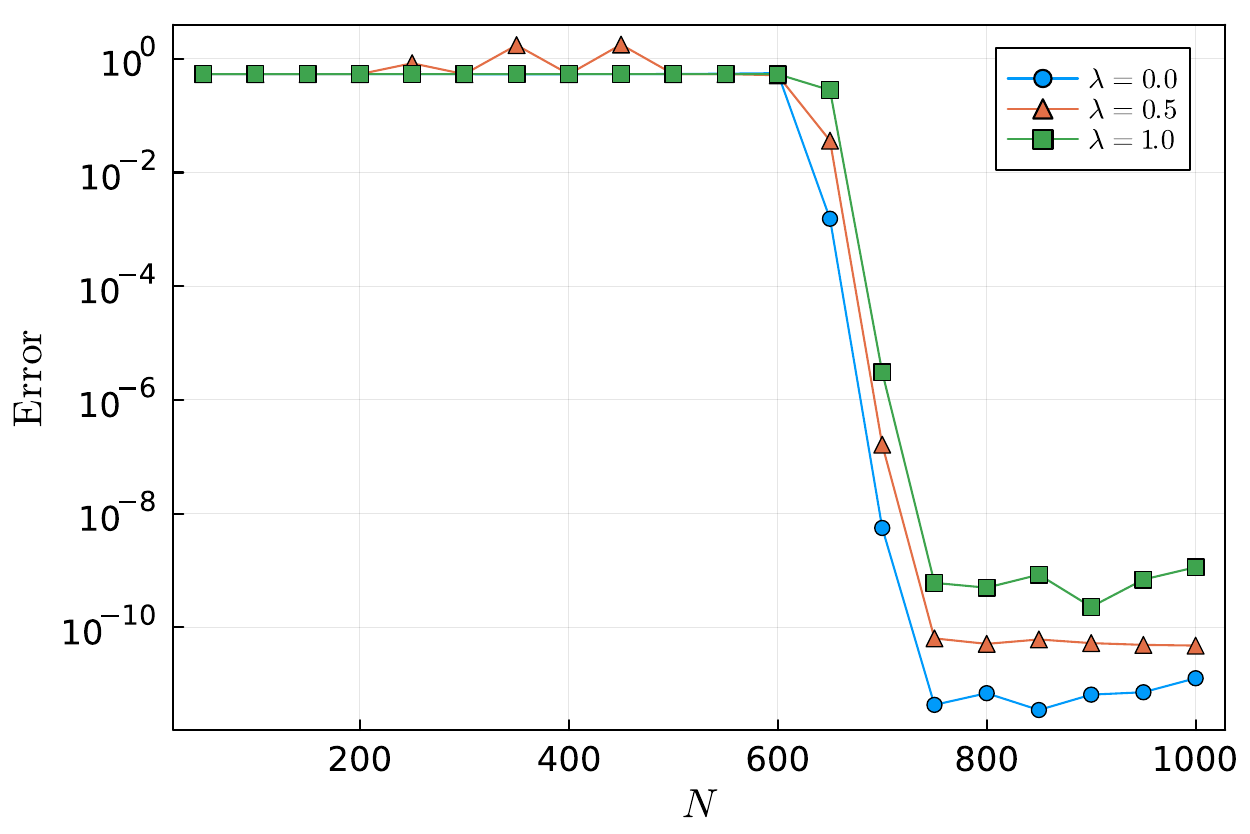}
    \caption{}
    \end{subfigure}
    \caption{(A) The convergence of the URC method applied to \eqref{eq:boundary-layer}. (B) The true solution. (C)  The effect of varying $\lambda = 0,1/2,1$.}
    \label{fig:airy}
\end{figure}

\subsubsection{Example 5} As a last example, we consider a boundary-value problem with non-smooth coefficients that has a smooth solution.  Specifically, consider
\begin{align}\label{eq:abs}
    \frac{d^2 u}{dx^2} + |x| u(x) = \left(|x|-\frac{\pi^2}{4}\right)\sin(\pi x /2), \quad u(-1) = -1, \quad u(1) = 1.
\end{align}
It follows that $u(x) = \sin(\pi x /2)$.  While it is still possible to implement it, the use of the approach of Olver \& Townsend would be more challenging here because the orthogonal polynomial expansion of the absolute value function converges very slowly.  Nevertheless, due to the optimality elucidated in Theorem~\ref{t:restate} (Theorem~\ref{t:Kras}), the method converges very fast, see Figure~\ref{fig:abs}.  This indicates that Theorem~\ref{t:informal} (Theorem~\ref{t:MAIN}) is likely pessimistic.
\begin{figure}[tbp]
    \centering
    \includegraphics[width=.48\linewidth]{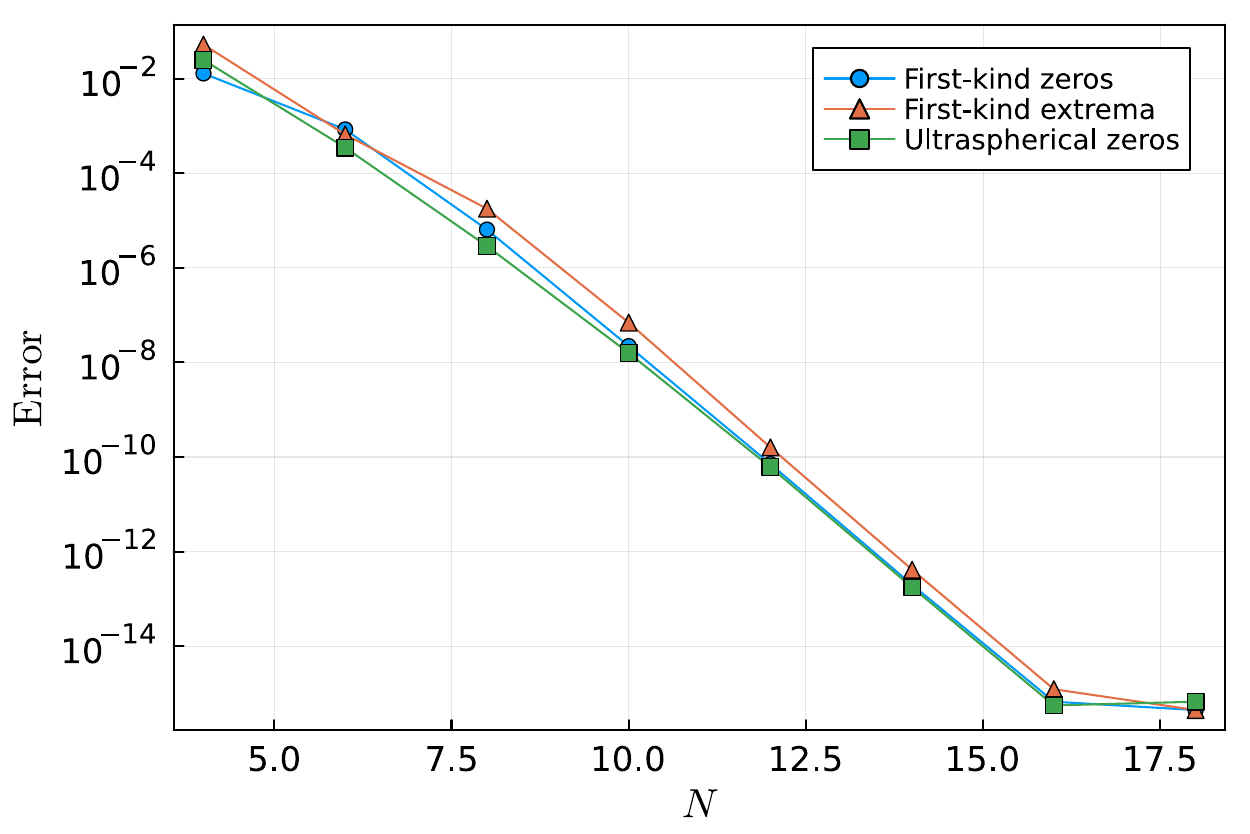}
    \caption{The convergence of the URC method applied to \eqref{eq:abs} demonstrating that the convergence rate is determined by the smoothness of the true solution and not by the smoothness of the coefficient functions.}
    \label{fig:abs}
\end{figure}

\subsection{Preconditioning for GMRES}\label{sec:precond}

In this section, we consider the iterative solution of the collocation system.  See \cite{Du2015}, for example, for discussion of preconditioning the method of Driscoll \& Hale.  Our approach is more straightforward and does not require so-called Birkhoff interpolation.  If the original system is
\begin{align*}
    \vec L_N^{\rm C} \tilde{\vec u}_N = \vec b_N,
\end{align*}
we recast it as
\begin{align*}
    \bDelta_N^{(s)}\begin{bmatrix} \id_k & \vec 0 \\
    \vec 0 & \vec F_{N-k}(\mu_{\lambda +k}) \end{bmatrix}\vec L_N^{\rm C}\vec Z_N^{-1} \bDelta_N^{(-s)} \tilde{\vec v}_N =  \bDelta_N^{(s)}\begin{bmatrix} \id_k & \vec 0 \\
    \vec 0 & \vec F_{N-k}(\mu_{\lambda +k}) \end{bmatrix} \vec b_N, \quad \tilde{\vec u}_N = \bDelta_N^{(-s)} \vec Z_N^{-1} \tilde{\vec v}_N.
\end{align*}
This gives a diagonal, right-preconditioner and dense, but easily computable, inverse-free left preconditioner.  Choosing $s$ is important, and it is really informed by the growth along the columns of the matrices $\vec E_{\pm 1}(\lambda, k)$.  Specifically, the row vector
\begin{align*}
\vec P_{\ell + \lambda \to \{\pm 1\}} \vec D_\ell(\lambda)
\end{align*}
grows as $j^{2\ell + \lambda}$ where $j$ is the column index.  Multiplication on the right by $\vec Z^{-1}_N$  will effectively compensate by a factor of $j^{-k}$.  So we could choose  the smallest value of $s \geq 0$ so that
\begin{align*}
    2\ell + \lambda -k - s \leq -1,
\end{align*}
and thus this row vector will correspond to a uniformly bounded linear functional\footnote{Technically, it suffices to have $2\ell + \lambda -k - s < -1/2$ but in the examples explored here, choosing $-1$ appears to give a better condition number.} on $\ell^2$, using Lemma~\ref{l:opgrowth}.  For a second-order problem with Dirichlet boundary conditions and $\lambda = 0$ we have $\ell = 0$:
\begin{align*}
    -2 - s \leq -1 \quad \Rightarrow \quad s = 0.
\end{align*}
 For a second-order problem with Neumann boundary conditions and $\lambda = 0$ we have $\ell = 1$:
\begin{align*}
    2 -2 - s \leq -1 \quad \Rightarrow \quad s = 1.
\end{align*}
We demonstrate this on \eqref{eq:boundary-layer} with $\epsilon = 0.05$ in Figure~\ref{fig:GMRES}(A) and with Neumann boundary conditions $u'(-1) = 0, u'(1) = 1$ in Figure~\ref{fig:GMRES}(B).  We see that the condition number is bounded as the discretization is refined.  Consequently, the required number of GMRES iterations required to achieve a relative tolerance of $10^{-14}$ saturates.  

\begin{figure}[tbp]
    \centering
    \begin{subfigure}[b]{0.48\textwidth}
    \centering
    \includegraphics[width=\linewidth]{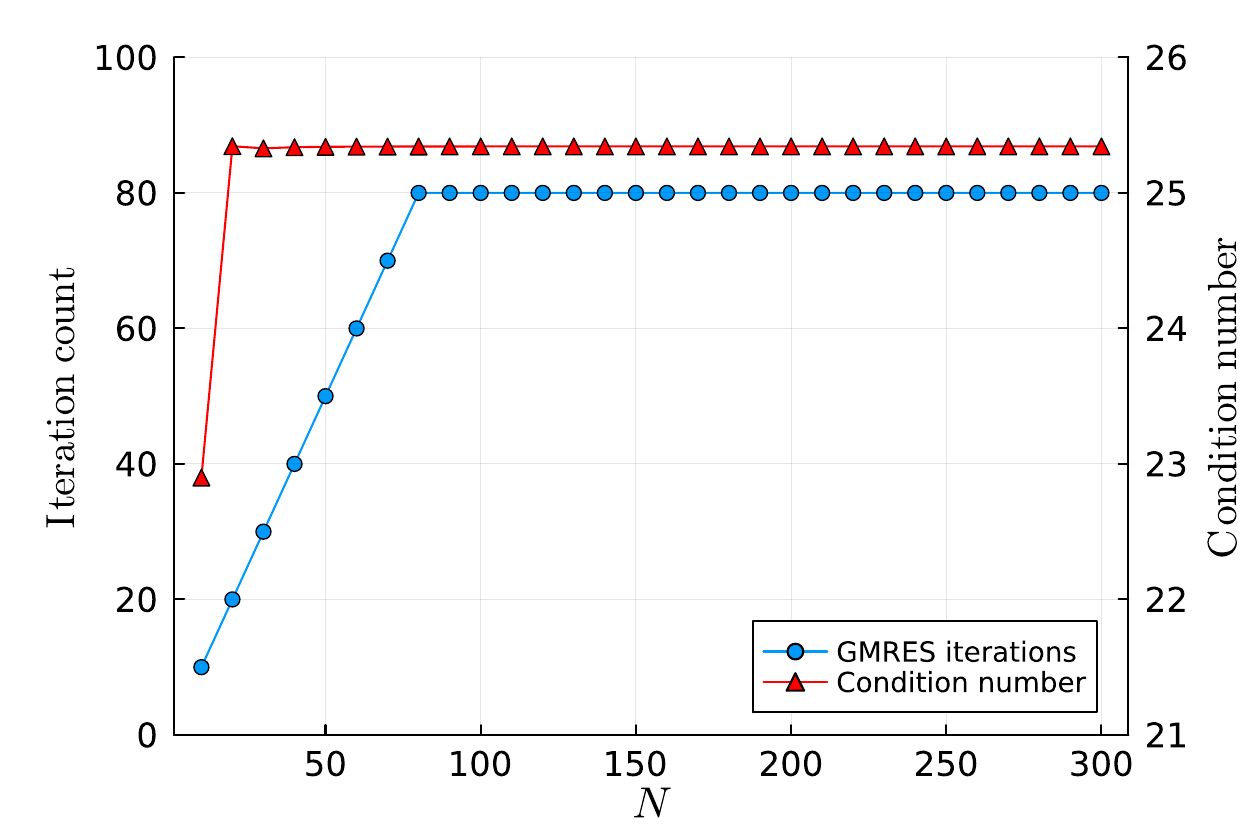}
    \caption{}
    \end{subfigure}
    \begin{subfigure}[b]{0.48\textwidth}
    \centering
    \includegraphics[width=\linewidth]{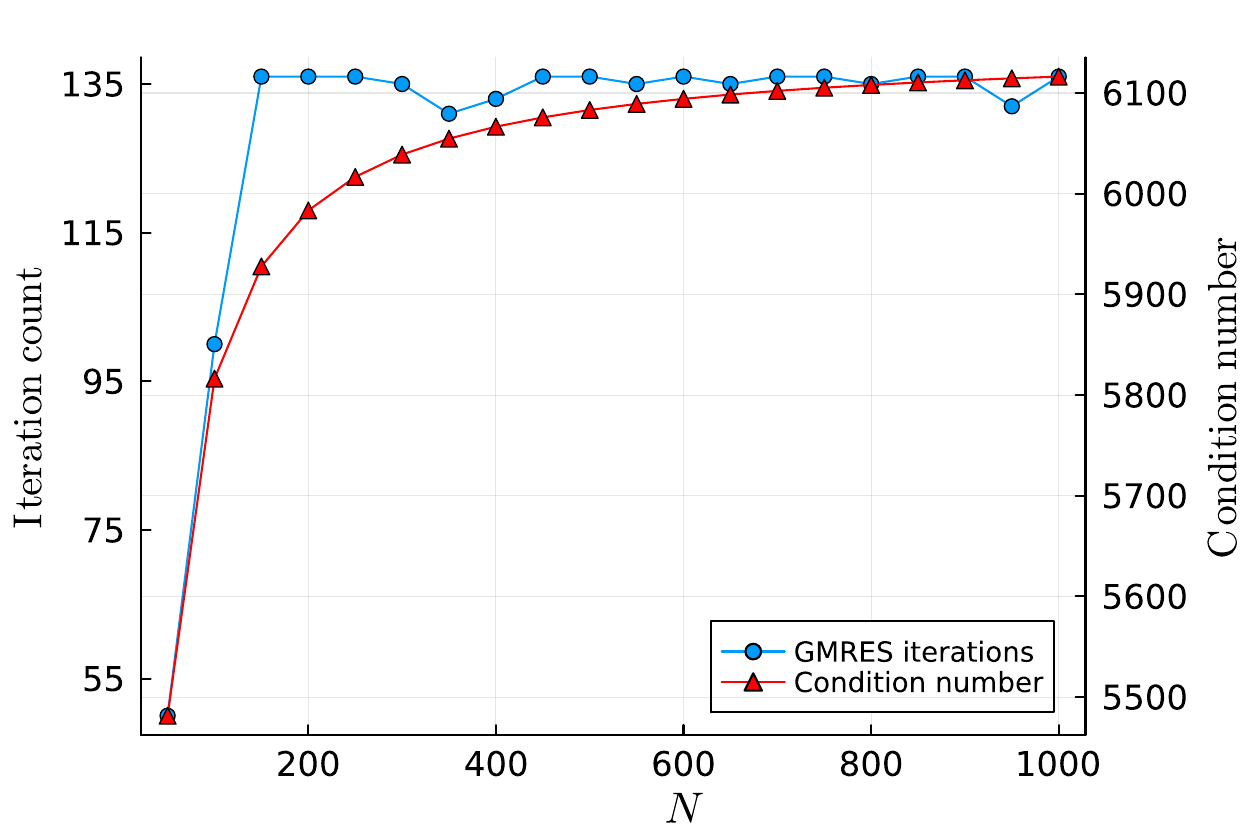}
    \caption{}
    \end{subfigure}
    \begin{subfigure}[b]{0.48\textwidth}
    \centering
    \includegraphics[width=\linewidth]{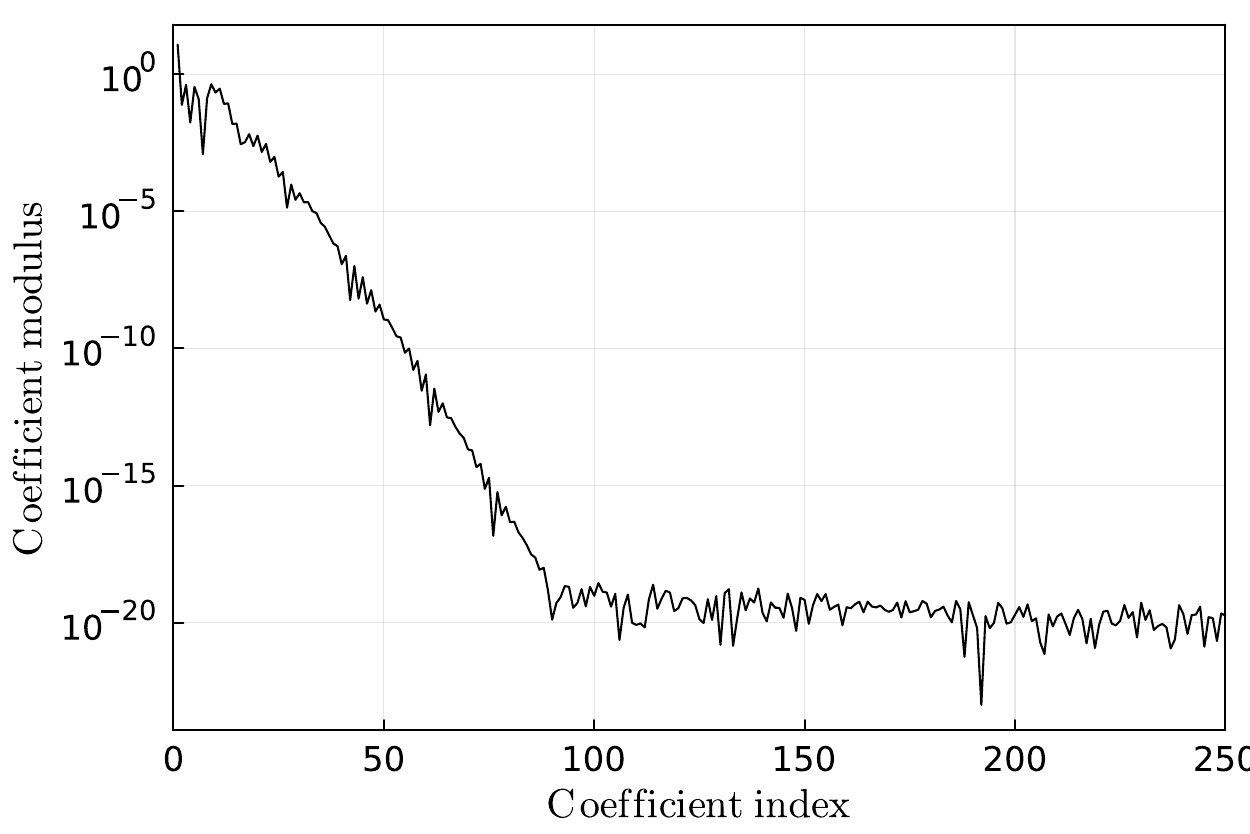}
    \caption{}
    \end{subfigure}
    \caption{(A) The condition number and number of iterations to solve \eqref{eq:boundary-layer} using preconditioned GMRES with $s = 0$.  (B) The condition number and number of iterations to solve \eqref{eq:boundary-layer} with Neumann boundary conditions using preconditioned GMRES with $s = 1$. (C)  The modulus of the first $250$ coefficients for the case of Neumann boundary conditions.  We see that the coefficients saturate below machine precision.}
    \label{fig:GMRES}
\end{figure}

\section{Open questions and future work}\label{sec:open}

The first main open question here is the resolution of Conjecture~\ref{conj}.  This would (1) simplify the proofs given here, (2) possibly give optimal rates of convergence, and (3) potentially provide rigorous justification for the preconditioning given in the last section.  The second main open question is to remove the boundary condition restriction in Theorem~\ref{t:Kras}.

The URC method also raises an important question about the computation of the roots of $p_N(x;\lambda)$ and the application of $\vec F_N(\mu_\lambda)$.  It seems that fast methods \`a la \cite{Hale2012, Bogaert2014a,Townsend2014} could be employed using the asymptotics of the orthogonal polynomials.  Furthermore, the fast application of $\vec F_N(\mu_\lambda)$ would be of use, and one approach would be to extend \cite{Hale2016}.

\section*{Acknowledgements}

The author would like to thank Sheehan Olver and Alex Townsend for helpful input regarding a draft of this paper.  This work was supported in part by DMS-2306438.

\appendix

\section{A review of a theorem of \cite{Krasnoselskii1972}}\label{sec:Kras}

In this section, we adapt \cite[Theorem~15.5]{Krasnoselskii1972} to our setting.    The theorem initially applies in the setting where $a_k = 1$, $\vec b = \vec 0$ and under the assumption that the leading-order operator equation, i.e, $a_j = 0$, $j = 0,1,2,\ldots,k-1$, is uniquely solvable.  We remove the $\vec b = \vec 0$ assumption, but are unable to remove any other restriction.

To state the theorem, let $\mathcal I_N^{P}$ be the polynomial interpolation operator at $N$ distinct nodes $P$ in $\mathbb I$.  

\begin{theorem}\label{t:Kras}
    Suppose $a_k = 1$, $a_j \in C^{0,\alpha}(\mathbb I), j = 0,1,2,\ldots,k-1$, and $f\in C^{0,\alpha}(\mathbb I)$, $\alpha > 0$.  Suppose that with the imposed boundary conditions, the leading-order operator equation and the full operator equation are both uniquely solvable.  If  the Lebesgue constant for $P$ satisfies $\|\mathcal I_{N-k}^{P}\|_\infty = o(\min\{N^{\alpha},N^{1/2}\})$, then for $N$ sufficiently large, the method \eqref{eq:coll} using the nodes $P = P_{N-k}$ as collocation nodes produces a solution $u_N$  that converges to the true solution $u$ of \eqref{eq:bvp}:
    \begin{align*}
        \| u_N - u\|_{H^k(\mathbb I)} \leq C  \left\| u^{(k)} - \mathcal I_{N-k}^P u^{(k)}\right\|_{L^2(\mathbb I)}.
    \end{align*}
\end{theorem}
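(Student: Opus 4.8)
The plan is to recast \eqref{eq:bvp} as a second-kind operator equation on $L^2(\mathbb I)$ for the unknown $v=u^{(k)}$, to identify the collocation scheme \eqref{eq:coll} as an interpolatory projection method for it, and then to run the classical perturbation argument for projection methods; this is the structure of \cite[Theorem~15.5]{Krasnoselskii1972}, the only genuinely new point being the treatment of $\vec b\ne\vec 0$. Since the leading-order problem is uniquely solvable, $\mathrm{BC}\colon w\mapsto\vec S[w(-1),\dots,w^{(k-1)}(-1)]^T+\vec T[w(1),\dots,w^{(k-1)}(1)]^T$ restricts to a bijection $\Pi_{k-1}\to\mathbb C^k$ (writing $\Pi_m$ for polynomials of degree $\le m$), so $u\mapsto(u^{(k)},\mathrm{BC}(u))$ is a bounded bijection $H^k(\mathbb I)\to L^2(\mathbb I)\times\mathbb C^k$, whose inverse I write $(g,\vec b)\mapsto\mathcal G g+\mathcal H\vec b$ with $\mathcal G\colon L^2(\mathbb I)\to H^k(\mathbb I)$ bounded, $(\mathcal G g)^{(k)}=g$, $\mathrm{BC}(\mathcal G g)=0$, and $\mathcal H\vec b\in\Pi_{k-1}$. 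Substituting $u=\mathcal G v+\mathcal H\vec b$ into $\mathcal L u=f$ gives
\[
(\id+\mathcal A)v=\tilde f,\qquad\mathcal A v:=\sum_{j=0}^{k-1}a_j\cdot(\mathcal G v)^{(j)},\qquad\tilde f:=f-\sum_{j=0}^{k-1}a_j\cdot(\mathcal H\vec b)^{(j)}.
\]
For $j\le k-1$ there is a bounded embedding $H^{k-j}(\mathbb I)\hookrightarrow C^{0,1/2}(\mathbb I)$, and multiplication by $a_j\in C^{0,\alpha}(\mathbb I)$ preserves Hölder classes, so $\mathcal A\colon L^2(\mathbb I)\to C^{0,\beta}(\mathbb I)$ is bounded with $\beta:=\min\{\alpha,1/2\}$; hence $\mathcal A$ is compact on $L^2(\mathbb I)$, and unique solvability of \eqref{eq:bvp} together with the Fredholm alternative makes $\id+\mathcal A$ boundedly invertible on $L^2(\mathbb I)$.

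Next I would match \eqref{eq:coll} with
\[
v_N+\mathcal I_{N-k}^{P}\mathcal A v_N=\mathcal I_{N-k}^{P}\tilde f,\qquad v_N\in\Pi_{N-k-1}.
\]
Indeed the candidates $u_N=\sum_{j<N}u_j p_j(\diamond;\lambda)$ with $\mathrm{BC}(u_N)=\vec b$ form exactly $\mathcal H\vec b+\mathcal G(\Pi_{N-k-1})$, and the $N-k$ interior equations of \eqref{eq:coll} state precisely $(\mathcal L u_N)(x_i)=f(x_i)$, i.e.\ $(\id+\mathcal I_{N-k}^{P}\mathcal A)v_N=\mathcal I_{N-k}^{P}\tilde f$ with $v_N=u_N^{(k)}$ and $\mathcal I_{N-k}^{P}v_N=v_N$; so $\vec L_N^{\rm C}$ is invertible iff $\id+\mathcal I_{N-k}^{P}\mathcal A$ is invertible on $\Pi_{N-k-1}$. (A short bootstrap — $u\in H^k(\mathbb I)$ forces $u^{(j)}\in C^{0,1/2}(\mathbb I)$ for $j<k$, hence $u^{(k)}=f-\sum_{j<k}a_j u^{(j)}\in C^{0,\beta}(\mathbb I)$ — makes $\mathcal I_{N-k}^{P}$ legitimately applicable to $v=u^{(k)}$, to $\mathcal A v$, and to $\tilde f$.)

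The heart of the argument is the operator-norm convergence $\|\mathcal I_{N-k}^{P}\mathcal A-\mathcal A\|_{L^2\to L^2}\to0$. For $g\in C^{0,\beta}(\mathbb I)$, Lebesgue's lemma and Jackson's theorem give $\|g-\mathcal I_{N-k}^{P}g\|_\infty\le(1+\|\mathcal I_{N-k}^{P}\|_\infty)E_{N-k-1}(g)\le C(1+\|\mathcal I_{N-k}^{P}\|_\infty)(N-k)^{-\beta}\|g\|_{C^{0,\beta}(\mathbb I)}$; since $\|\cdot\|_{L^2(\mathbb I)}\le\sqrt2\|\cdot\|_\infty$ and $\|\mathcal I_{N-k}^{P}\|_\infty=o(\min\{N^\alpha,N^{1/2}\})=o(N^\beta)$, we obtain $\|\mathcal I_{N-k}^{P}-\id\|_{C^{0,\beta}(\mathbb I)\to L^2(\mathbb I)}\to0$, and composing with the bounded $\mathcal A\colon L^2(\mathbb I)\to C^{0,\beta}(\mathbb I)$ gives the claim. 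A Neumann-series perturbation of $(\id+\mathcal A)^{-1}$ then shows $\id+\mathcal I_{N-k}^{P}\mathcal A$ is invertible on $\Pi_{N-k-1}$ for $N$ large with the $L^2$-operator norm of its inverse at most $2\|(\id+\mathcal A)^{-1}\|$ — in particular $u_N$ exists and is unique. Subtracting the interpolated exact equation $\mathcal I_{N-k}^{P}(v+\mathcal A v)=\mathcal I_{N-k}^{P}\tilde f$ from the collocation equation yields $(\id+\mathcal I_{N-k}^{P}\mathcal A)(v_N-\mathcal I_{N-k}^{P}v)=\mathcal I_{N-k}^{P}\mathcal A(v-\mathcal I_{N-k}^{P}v)$, so, using $\|\mathcal I_{N-k}^{P}\mathcal A\|_{L^2\to L^2}\le\|\mathcal A\|+1$ for $N$ large, $\|v_N-\mathcal I_{N-k}^{P}v\|_{L^2(\mathbb I)}\le C\|v-\mathcal I_{N-k}^{P}v\|_{L^2(\mathbb I)}$; a triangle inequality then bounds $\|u_N^{(k)}-u^{(k)}\|_{L^2(\mathbb I)}$ by $C'\|u^{(k)}-\mathcal I_{N-k}^{P}u^{(k)}\|_{L^2(\mathbb I)}$, and applying the bounded operator $\mathcal G$ to $u_N-u=\mathcal G(v_N-v)$ delivers the stated $H^k(\mathbb I)$-estimate.

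I expect the main obstacle to be exactly that operator-norm convergence: one must verify that $\mathcal A$ smooths by $\beta=\min\{\alpha,1/2\}$ Hölder derivatives — combining the $H^{k-j}(\mathbb I)\hookrightarrow C^{0,1/2}(\mathbb I)$ gain from the $k$ integrations inside $\mathcal G$ with the Hölder regularity of the coefficients — and that this gain outpaces the growth of $\|\mathcal I_{N-k}^{P}\|_\infty$, which is precisely what forces the quantitative hypothesis on the Lebesgue constant. The remaining ingredients (construction of $\mathcal G,\mathcal H$, compactness of $\mathcal A$, the Fredholm step, the Neumann series, and the closing triangle inequality) are routine.
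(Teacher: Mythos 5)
Your proposal is correct and follows essentially the same route as the paper: recast \eqref{eq:bvp} as a second-kind equation $(\id+\mathcal K)v=\tilde f$ for $v=u^{(k)}$ via the Green's operator of the leading-order problem, note $\mathcal K$ maps $L^2(\mathbb I)$ boundedly into $C^{0,\min\{\alpha,1/2\}}(\mathbb I)$, combine Jackson's theorem with the Lebesgue-constant hypothesis to get $\|(\id-\mathcal I_{N-k}^P)\mathcal K\|_{L^2\to L^2}\to 0$, and close by the standard projection-methods estimate. The one place you diverge is $\vec b\neq\vec 0$: the paper first proves the $\vec b=\vec 0$ case and then extends by an explicit block manipulation of the collocation matrix, whereas you absorb $\vec b$ from the outset via the polynomial lift $\mathcal H\vec b$ into $\tilde f$, which is a cleaner packaging of the same content.
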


\begin{proof}
    We first show convergence when $\vec b = \vec 0$.  Let $P_{N-k} = (x_1,\ldots,x_{N-k})$ be the desired nodes.    By the unique solvability of the leading-order problem, we have that the only choice of coefficients such that
    \begin{align*}
        \sum_{j=0}^{k-1} c_j p_j(x;\lambda) 
    \end{align*}
    satisfies the boundary conditions is $c_j = 0 $ for all $j$.  This fact is equivalent to the principal $k\times k$ subblock of $\vec B := \vec S \vec E_{-1}(\lambda,k) + \vec T \vec E_1(\lambda, k)$ being invertible.  Then we select a basis $\vec V_N \in N \times N -k$ for the nullspace of $\vec B$ and consider the discretization, in the notation of \eqref{eq:coll},
    \begin{align*}
        \vec L_P \vec Q_N \vec V_N \vec c_N = \begin{bmatrix} f(x_1) \\ \vdots \\ f(x_{N-k}) \end{bmatrix}.
    \end{align*}
    Furthermore, for $V_N = (v_{ij})$, we find that
    \begin{align*}
        \phi_j(x) = \sum_{i=1}^{N-1} v_{ij} p_{i-1}(x;\lambda),
    \end{align*}
    is a polynomial that satisfies the boundary conditions.

The Green's function operator $\mathcal G$, for the leading-order operator (with the given boundary conditions), induces a bounded linear transformation from $L^2(\mathbb I)$ to $H^k(\mathbb I)$.   Necessarily,
    \begin{align*}
        \mathcal G \phi_j^{(k)} = \phi_j.
    \end{align*}
    So, setting $\vec c_N = (c_j)$ our collocation system can be written as
    \begin{align*}
        \sum_{j=1}^N c_j \left[\phi_j^{(k)}(x_i) + \sum_{\ell=0}^{k-1} a_\ell(x_i) \frac{\D^\ell}{\D x^\ell}\mathcal G \phi_j^{(k)}(x_i) \right] = f(x_i), \quad i = 1,2,\ldots,N-k.
    \end{align*}
    This can be identified with the collocation projection $\mathcal I_{N-k}^{P}, ~P = P_{N-k},$ applied to discretize an operator equation
    \begin{align*}
        (\id + \mathcal K) \psi = f,
    \end{align*}
    where $\mathcal K: L^2(\mathbb I) \to C^{0,\alpha'}(\mathbb I)$, $\alpha' = \min\{\alpha,1/2\}$, and $\id + \mathcal K$ is invertible on $L^2(\mathbb I)$.  That is, we seek the approximate solution
    \begin{align*}
        (\id + \mathcal I_{N-k}^{P}\mathcal K) \psi_N = \mathcal I_{N-k}^{P}f, \quad \psi_N \in \mathrm{span}\{\phi_1^{(k)},\ldots,\phi_{N-k}^{(k)}\}.
    \end{align*}
    We then see that $\mathrm{span}\{\phi_1^{(k)},\ldots,\phi_{N-k}^{(k)}\}$ is simply the span of all polynomials of degree at most $N -k -1$.  Indeed, suppose these functions are linearly dependent.  Then there is a non-trival linear combination that vanishes. This contradicts the assumed unique solvability of leading-order problem.  So, now, we are in the classical framework of projection methods and we claim that
    \begin{align*}
        \|(\id - \mathcal I_{N-k}^{P}) \mathcal K\|_{L^2(\mathbb I)} \to 0,
    \end{align*}
    as $N \to \infty$.  Indeed, it suffices to show that
    \begin{align*}
        \|\id - \mathcal I_{N-k}^{P}\|_{C^{0,\alpha'}(\mathbb I) \to L^2(\mathbb I)} \to 0.
    \end{align*}

    While stronger results are possible, to see this, we note that for $g \in H^1(\mathbb I)$, $g$ can be taken to be $1/2$-H\"older continuous with H\"older constant bounded above by the $H^1(\mathbb I)$ norm of $g$.  Since $\mathcal G$ maps to $H^k(\mathbb I)$, $\frac{\D^\ell}{\D x^\ell}\mathcal G$, $0 \leq \ell \leq k -1$, maps boundedly from $L^2(\mathbb I)$ to $H^1(\mathbb I)$.  Because the coefficient functions are $C^{0,\alpha}(\mathbb I)$ we obtain that
    \begin{align*}
        u \mapsto g := a_\ell \frac{\D^\ell}{\D x^\ell}\mathcal G u
    \end{align*}
    is a bounded operator from $L^2(\mathbb I)$ to $C^{0,\alpha'}(\mathbb I)$, $\alpha' = \min\{\alpha,1/2\}$. Then for $g \in C^{0,\alpha'}(\mathbb I)$, Jackson's theorem gives that the best polynomial approximation $p_{N-k}^*$ of degree $N -k -1$ satisfies
    \begin{align*}
        \|p_{N-k}^* - g\|_\infty \leq C \|g\|_{C^{0,\alpha'}(\mathbb I)} N^{-\alpha'}.
    \end{align*}
   And therefore
    \begin{align*}
        \|\mathcal I_{N-k}^{P}g - g\|_{L^2(\mathbb I)} \leq \sqrt{2} \|\mathcal I_{N-k}^{P}g - g\|_\infty \leq \sqrt{2}C (1 + \|\mathcal I_{N-k}^{P}\|_\infty) \|g\|_{C^{0,\alpha'}(\mathbb I)} N^{-\alpha'}.
    \end{align*}
     With the assumption $ \|\mathcal I_{N-k}^{P}\|_\infty = o(\min\{N^{\alpha},N^{1/2}\})$, Theorem~\ref{t:proj} applies, giving $c, C > 0$ such that for $N$ sufficiently large
    \begin{align}\label{eq:conv}
    c \| \psi - \mathcal I_{N-k}^{P} \psi \|_{L^2(\mathbb I)} \leq \| \psi - \psi_N \|_{L^2(\mathbb I)} \leq C \| \psi - \mathcal I_{N-k}^{P} \psi \|_{L^2(\mathbb I)}.
    \end{align}
    This establishes the required convergence when $\vec b = \vec 0$, after applying $\mathcal G$.

    It remains to treat $\vec b \neq \vec 0$.  To do this, we augment $\vec V_N$ with the first $k$ standard basis vectors
    \begin{align*}
    \tilde {\vec V}_N = \begin{bmatrix} \vec e_1 & \cdots & \vec e_k & \vec V_N \end{bmatrix}.  
    \end{align*}
    We claim that $ \tilde {\vec V}_N$ has linearly independent columns.  If this were not the case, then a non-trivial linear combination of the columns of $\vec V_N$ would give a non-trivial linear combination of the first $k$ standard basis vectors.  But, because the first $k\times k$ principal subblock $\vec B_1$ of $\vec B$ is invertible this contradicts that the columns of $\vec V_N$ are in the nullspace of $\vec B$.  So, the full system one has to consider is
    \begin{align*}
        \begin{bmatrix} 
        \vec B_1 & \vec B_2 \\
        \vec L_{1,P} & \vec L_{2,P} \end{bmatrix} \begin{bmatrix} \vec e_1 & \cdots & \vec e_k & \vec V_N \end{bmatrix} \begin{bmatrix} \vec d_1 \\ \vec d_2 \end{bmatrix}  = \begin{bmatrix} \vec b \\ f(x_1) \\ \vdots \\ f(x_{N-k}) \end{bmatrix}.
    \end{align*}
    This is rewritten as
     \begin{align*}
        \begin{bmatrix} 
        \id_k & \vec B_{1}^{-1} \vec B_2 \\
        \vec L_{1,P} & \vec L_{2,P} \end{bmatrix} \begin{bmatrix} \vec e_1 & \cdots & \vec e_k & \vec V_N \end{bmatrix} \begin{bmatrix} \vec d_1 \\ \vec d_2 \end{bmatrix}  = \begin{bmatrix}\vec B_{1}^{-1} \vec b \\ f(x_1) \\ \vdots \\ f(x_{N-k}) \end{bmatrix}.
    \end{align*}
    By writing out the equations for $\vec d_1$ and $\vec d_2$ we find:
    \begin{align*}
        \vec d_1 &= \vec B_{1}^{-1} \vec b,\\
        \begin{bmatrix} \vec L_{1,P} &  \vec L_{1,P} \end{bmatrix} \vec V_N \vec d_2 &= \begin{bmatrix} f(x_1) \\ \vdots \\ f(x_{N-k}) \end{bmatrix} - \vec L_{1,P} \vec B_{1}^{-1} \vec b.
    \end{align*}
    We recognize $(s_j) = \vec s = \vec B_{1}^{-1} \vec b$ to be the choice of the coefficients $s_0,\ldots,s_{k-1}$ such that
    \begin{align*}
        b(x) := \sum_{j=0}^{k-1} s_j p_j(x;\lambda)
    \end{align*}
    satisfies the boundary conditions.  Then, we recognize the equation for $\vec d_2$ to be the discretization of the boundary-value problem with $\vec b = \vec 0$ and $f$ replaced with $f(x) - \mathcal L b(x)$.    And since solution of this problem is given by $u(x)  - b(x)$ where $u$ is the solution of \eqref{eq:bvp}.  Solving for $\vec d_2$ generates a convergent approximation  \`a la \eqref{eq:conv}.  Since $b$ is a low-degree polynomial, we can add and subtract it within each of these norms, using that $\mathcal I_{N-k}^{P}$ is a projection, to obtain the result.
\end{proof}

\begin{remark}\label{r:zeros}
We then pause to remark that the following choices all give $\|\mathcal I_{N-k}^P\|_\infty = o(N^{-1/2})$:
\begin{itemize}
    \item the Chebyshev first-kind extrema,
    \item the Chebyshev first-kind zeros, and
    \item the roots of $p_{N-k}(x;\lambda)$ for $0 \leq \lambda < 1$.
\end{itemize}
See \cite[p.~336]{Szego1939} for a discussion of the fact that $\|\mathcal I_N^{\mu_\lambda}\|_\infty = O(\max\{\log N, N^{\lambda - 1/2}\})$ and hence the restriction $\lambda < 1$.
\end{remark}

\section{Deferred proofs}\label{sec:defer}

\begin{proof}[Proof of Theorem~\ref{t:main}]
To simplify notation set $f_m = \mathcal I_m^{\rm Ch}f$. Since the top rows vanish identically in the difference, we must compare
\begin{align*}
    \vec T_j(f_m) &:= \begin{bmatrix} \id_{N-k} & \vec 0 & \cdots \end{bmatrix}
    \vec M(f_m;k + \lambda)\vec C_{j + \lambda \to k + \lambda} \vec D_j(\lambda) \vec Q_N,\\
    \tilde{\vec T}_j(f_m) &:=  \left[\vec F_{N-k}(\mu_{\lambda + k}) f_m(P) \vec F_{N-k}(\mu_{\lambda + k})^{-1}\right] \vec F_{N-k}(\mu_{\lambda + k})\vec P_{\lambda + k \to P} \vec C_{\lambda +j \to \lambda + k}\vec D_j(\lambda)\vec Q_N,
\end{align*}
by estimating
\begin{align*}
     \bDelta^{(s)}_{N-k} (\vec T_j(f_m) - \tilde{\vec T}_j(f_m)) \vec Z_N^{-1} \bDelta^{(-s)}_N.
\end{align*}
Then, we move to
\begin{align*}
    \vec F_{N-k}(\mu_{\lambda + k}) f_m(P) \vec F_{N-k}(\mu_{\lambda + k})^{-1} &=   \vec F_{N-k}(\mu_{\lambda + k})\vec P_{\lambda + k \to P} \vec M(f_m;k + \lambda)\vec Q_{N-k}.
\end{align*}
To better express contributions, we block
\begin{align*}
    \vec M(f_m,k + \lambda) = \begin{bmatrix} \vec M_{11} & \vec M_{12} & \vec M_{13} \\
    \vec M_{21} & \vec M_{22} & \vec M_{12} \\
    \vec 0 & \vec M_{32} & \vec M_{33}
    \end{bmatrix}.
\end{align*}
where $\vec M_{11}$ is $N-k \times N-k$, $\vec M_{12}$ is $N-k \times k$, $\vec M_{21}$ is $m \times N -k$. And we block
\begin{align*}
\vec C_{\lambda +j \to \lambda + k}\vec D_j(\lambda)\vec Z^{-1} = \begin{bmatrix} \vec S_{11} & \vec S_{12}\\
\vec S_{21} & \vec S_{22} \\
\vec 0 & \vec S_{32} \end{bmatrix},
\end{align*}
where $\vec S_{11}$ is $N-k \times N$, $\vec S_{21}$ is $k \times N$.  And we note that $\vec S_{21}$ is only non-zero in its upper-right $k -j \times k-j$ subblock.  We then have
\begin{align*}
    \tilde{\vec T}_j(f_m) \vec Z^{-1}_N&= \begin{bmatrix} \id_{N-k} & \vec a_{N-k+1} & \cdots & \vec a_{N-k+m} \end{bmatrix} \begin{bmatrix} \vec M_{11} \\ \vec M_{21} \end{bmatrix} \begin{bmatrix} \id_{N-k} & \vec a_{N-k+1} & \cdots & \vec a_{N} \end{bmatrix} \begin{bmatrix} \vec S_{11} \\ \vec S_{21} \end{bmatrix},
\end{align*}
and
\begin{align*}
    {\vec T}_j(f_m)\vec Z^{-1}_N &= \begin{bmatrix} \vec M_{11} & \vec M_{12} \end{bmatrix} \begin{bmatrix} \vec S_{11} \\ \vec S_{21} \end{bmatrix}.
\end{align*}
Next, we introduce the weight matrices $\bDelta^{(s)}, \bDelta^{(-s)}$ giving
\begin{align*}
\bDelta^{(s)}_{N-k}\tilde{\vec T}_j(f_m)& \vec Z^{-1}_N\bDelta^{(s)}_{N} = \bDelta^{(s)}_{N-k}\begin{bmatrix} \id_{N-k} & \vec a_{N-k+1} & \cdots & \vec a_{N-k+m} \end{bmatrix}\bDelta^{(-s)}_{N-k+m} \\
& \times\bDelta^{(s)}_{N-k+m}\begin{bmatrix} \vec M_{11} \\ \vec M_{21} \end{bmatrix}\bDelta^{(-s)}_{N-k} \\
& \times \bDelta^{(s)}_{N-k} \begin{bmatrix} \id_{N-k} & \vec a_{N-k+1} & \cdots & \vec a_{N} \end{bmatrix} \bDelta^{(-s)}_{N} \\
& \times \bDelta^{(s)}_{N}\begin{bmatrix} \vec S_{11} \\ \vec S_{21} \end{bmatrix} \bDelta^{(-s)}_{N}.
\end{align*}
and
\begin{align*}
    \bDelta^{(s)}_{N-k}{\vec T}_j(f_m)& \vec Z^{-1}_N\bDelta^{(s)}_{N}= \bDelta^{(s)}_{N-k} \begin{bmatrix} \vec M_{11} & \vec M_{12} \end{bmatrix} \bDelta^{(-s)}_{N}  \bDelta^{(s)}_{N}\begin{bmatrix} \vec S_{11} \\ \vec S_{21} \end{bmatrix} \bDelta^{(s)}_{N}.
\end{align*}
We then use $\check{\phantom T}$ to denote all the respective terms after the $\bDelta^{(\pm s)}$ factors have been absorbed: 
\begin{align*}
   \bDelta^{(s)}_{N-k}\tilde{\vec T}_j(f_m) \vec Z^{-1}_N\bDelta^{(-s)}_{N}&= \begin{bmatrix} \id_{N-k} & \check{\vec a}_{N-k+1} & \cdots & \check{\vec a}_{N-k+m} \end{bmatrix} \begin{bmatrix} \check{\vec M}_{11} \\ \check{\vec M}_{21} \end{bmatrix} \begin{bmatrix} \id_{N-k} & \check{\vec a}_{N-k+1} & \cdots & \check{\vec a}_{N} \end{bmatrix} \begin{bmatrix} \check{\vec S}_{11} \\ \check{\vec S}_{21} \end{bmatrix},\\
    \bDelta^{(s)}_{N-k}{\vec T}_j(f_m)\vec Z^{-1}_N\bDelta^{(-s)}_{N} &= \begin{bmatrix} \check{\vec M}_{11} & \check{\vec M}_{12} \end{bmatrix} \begin{bmatrix} \check{\vec S}_{11} \\ \check{\vec S}_{21} \end{bmatrix}.
\end{align*}
From Corollary~\ref{c:bounded}, there exists $C > 0$ such that $\|\check {\vec M}_{ij}\|_{\ell^2} \leq C$.  The entries of $\vec S_{21}$ get inflated by at most $N^s/(N - k)^s = O(1)$ and therefore $\|\check {\vec S}_{21}\|_{\rm F} = O(N^{j-k})$.  Set
\begin{align*}
    \vec A_\ell = \begin{bmatrix}  \check{\vec a}_{N-k+1} & \cdots & \check{\vec a}_{N-k+\ell} \end{bmatrix} 
\end{align*}
and therefore
\begin{align*}
\bDelta^{(s)}_{N-k}(\tilde{\vec T}_j(f_m) - \bDelta^{(s)}_{N-k}{\vec T}_j(f_m))\vec Z^{-1}_N\bDelta^{(s)}_{N} & = (\check{\vec M}_{11} + \vec A_m \check{\vec M}_{21})( \check{\vec S}_{11} + \vec A_k \check{\vec S}_{21}) - \check{\vec M}_{11}\check{\vec S}_{11} - \check{\vec M}_{12} \check{\vec S}_{21}\\
& = \check{\vec M}_{11} \vec A_k \check{\vec S}_{21} +  \vec A_m \check{\vec M}_{21}\vec A_k \check{\vec S}_{21} - \check{\vec M}_{12} \check{\vec S}_{21}  + \vec A_m \check{\vec M}_{21} \check{\vec S}_{11}.
\end{align*}
In $2$-norm, the first three terms are each $O(m N^{j-k})$.  The last term requires further study.  Block
\begin{align*}
    \check{\vec M}_{21}  = \begin{bmatrix} \vec 0 & \vec R \end{bmatrix},
\end{align*}
where $\vec R$ is $m \times m$ and has bounded $2$-norm.  Then, blocking
\begin{align*}
    \check{\vec S}_{11} = \begin{bmatrix} \hat{\vec S}_1 \\ \hat{\vec S}_2 \end{bmatrix},
\end{align*}
where $\hat{\vec S}_2$ is $m \times N$, we have that $\|\hat{\vec S}_2\|_{\rm F} = O((N - m)^{j-k})$ and that gives
\begin{align*}
    \|\vec A_m \check{\vec M}_{21} \check{\vec S}_{11}\|_{\ell^2} = O(m (N - m)^{j-k}).
\end{align*}
In introducing a factor of $\bDelta_N^{(-t)}$ on the right, we see this will add extra decay of $O((N-m)^{-t})$ to $\hat {\vec S}_2$, $\check {\vec S}_{21}$.  The theorem follows
\end{proof}

 \begin{proof}[Proof of Theorem~\ref{t:inf}]
    For (1), one just needs that $\vec M(a_j; k + \lambda)$ is bounded on $\ell^2_s(\mathbb N)$ and $k + \alpha > 2 + s$ is sufficient by Corollary~\ref{c:bounded}.\\

    For (2), by the Fredholm alternative, it suffices to show that the kernel of $\id + \vec K$ is trivial.  So, if \eqref{eq:bvp} is uniquely solvable, but $(\id + \vec K)\vec v = \vec 0$, $\vec v = (v_j)_j$.  Then set
    \begin{align*}
    v(x) = \sum_{j=0}^\infty v_j p_j(x;\lambda).  
    \end{align*}
    It is sufficient to suppose that $\vec v$ is $\ell^2_s(\mathbb N)$ for $s$ sufficiently large so that $v^{(k)}(x)$ is continuous.  So, set $\vec d = \vec D_k(\lambda) \vec v$ and 
    \begin{align*}
    v^{(k)}(x) = \sum_{j=0}^\infty d_j p_j(x;\lambda+k).  
    \end{align*}
    So, if $\vec v \in \ell_{s + k}^2(\mathbb N)$ then $\vec d \in \ell_{s}^2(\mathbb N)$.  And from Lemma~\ref{l:opgrowth}, $p_j(x;\lambda +k ) = O(j^{\lambda + k})$, we require $-s + \lambda + k < -1/2$ and then
    \begin{align*}
        \left|\sum_{j=1}^\infty d_j p_j(x;\lambda+k) \right| \leq \|\vec d\|_{\ell^2_s} \left( \ell(\lambda + k) \sum_{j=1} j^{- 2s + 2\lambda + 2k} \right)^{1/2} < \infty.
    \end{align*}
    Then we conclude, by the unique solvability of \eqref{eq:bvp}, that $\vec v = \vec 0$.

    For (3), by the compactness of $\vec K$ it follows that $\vec Q_N\vec Q_N^T \vec K $ converges in operator norm to $\vec K$ \cite{atkinson}.  Therefore $\id + \vec Q_N\vec Q_N^T \vec K$ is invertible for sufficiently large $N$, $N > N_0$, satisfying
    \begin{align*}
        \|(\id + \vec Q_N\vec Q_N^T \vec K)^{-1}\|_{\ell^2_s}  \leq 2 \|(\id + \vec K)^{-1}\|_{\ell^2_s}.
    \end{align*}  
    Furthermore, the range of $\vec Q_N$ is an invariant subspace for this operator, implying that it must be invertible on this subspace.  And on this subspace it is equal to $\vec L_N^{\rm FS}\vec Z_N^{-1}$ so this operator must also be invertible. Thus
    \begin{align*}
    \|\vec Z_N {\vec L_N^{\rm FS}}^{-1}\|_{\ell_s^2} = \sup_{\substack{\vec u \in \mathrm{ran}\, \vec Q_N \\ \|\vec u\|_{\ell^2_s} = 1}}\|(\id + \vec Q_N\vec Q_N^T \vec K)^{-1} \vec u\|_{\ell^2_s} \leq \|(\id + \vec Q_N\vec Q_N^T \vec K)^{-1}\|_{\ell^2_s},
    \end{align*}
    and (3) follows.

    Then (4) is a consequence of standard theory for projection methods \cite{atkinson}.
\end{proof}

\begin{proof}[Proof of Proposition~\ref{p:coll_stable}]
Consider, as above
\begin{align*}
\tilde{\vec T}_j(a_j) &:=  \left[\vec F_{N-k}(\mu_{\lambda + k}) a_j(P)\right]\vec P_{\lambda + k \to P} \vec C_{\lambda +j \to \lambda + k}\vec D_j(\lambda)\vec Q_N,
\end{align*}
And we examine
\begin{align*}
    \left[\vec F_{N-k}(\mu_{\lambda + k}) (a_j(P) - \tilde a_j(P))\right]\vec P_{\lambda + k \to P}.
\end{align*}
Recall that
\begin{align*}
        \vec F_{N-k}(\mu_{\lambda + k}) = \vec U_{N-k}(\mu_{\lambda + k}) \vec W_{N-k}(\mu_{\lambda + k}).
\end{align*}
So, we can estimate, using Lemma~\ref{l:weights},
\begin{align*}
    \|\bDelta_{N-k}^{(s)}\vec F_{N-k}(\mu_{\lambda + k})\|_{\ell^2} \leq  \|\bDelta_{N-k}^{(s)}\|_{\ell^2} \|\vec U_{N-k}(\mu_{\lambda + k})\|_{\ell^2} \|\vec W_{N-k}(\mu_{\lambda + k})\|_{\ell^2}\leq C N^{s -1/2}. 
\end{align*}
Then, as a crude bound, by Lemma~\ref{l:opgrowth}, using the Frobenius norm as an upper bound
\begin{align*}
    \|\vec P_{\lambda + k \to P} \bDelta^{(-s)}_N\|_{\ell^2}^2 \leq \ell(\lambda + k)^2\sum_{i = 1}^{N-k} \sum_{j = 1}^N j^{2(k + \lambda - s)} \leq C_{\lambda+k,s} N.
\end{align*}
The proposition follows.
\end{proof}

\section{Key ideas from operator theory and projection methods}

In this section, we use script upper-case Roman letters for bounded linear operators between Banach spaces.  We include these results for completeness but point the reader to a proper text \cite{atkinson}.  

\begin{theorem}\label{t:continuity}
Suppose that $\mathcal L \in L(V,W)$ is invertible.  If $\mathcal M \in L(V,W)$ is such that $\|\mathcal L -\mathcal M \|_{V \to W} < \|\mathcal L^{-1}\|_{W \to V}^{-1}$, then $\mathcal M$ is also invertible.  Furthermore, we have the following estimates
    \begin{align*}
         \| \mathcal M^{-1} \|_{W \to V} &\leq \frac{\|\mathcal L^{-1}\|_{W \to V}}{1 - \rho},\\
         \|\mathcal M^{-1} - \mathcal L^{-1} \|_{W \to V} &\leq \frac{\rho}{1 - \rho} \|\mathcal L^{-1}\|_{W \to V},
    \end{align*}
    where $\rho = \|\mathcal L^{-1}\|_{W \to V} \|\mathcal L -\mathcal M \|_{V \to W}$
\end{theorem}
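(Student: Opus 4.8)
The plan is to reduce the statement to the classical Neumann series (Banach perturbation) argument. First I would factor $\mathcal M$ through $\mathcal L$: writing $\mathcal M = \mathcal L - (\mathcal L - \mathcal M) = \mathcal L\bigl(\id_V - \mathcal L^{-1}(\mathcal L - \mathcal M)\bigr)$ and setting $\mathcal E \defeq \mathcal L^{-1}(\mathcal L - \mathcal M) \in L(V,V)$. The hypothesis then gives exactly $\|\mathcal E\|_{V \to V} \le \|\mathcal L^{-1}\|_{W \to V}\,\|\mathcal L - \mathcal M\|_{V \to W} = \rho < 1$, where the submultiplicativity across the composition $V \to W \to V$ is what makes the two norm labels combine correctly.

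Second, I would invoke the Neumann series: since $\rho < 1$, the series $\sum_{n=0}^{\infty} \mathcal E^{n}$ converges absolutely in $L(V,V)$ to a bounded inverse of $\id_V - \mathcal E$, with $\|(\id_V - \mathcal E)^{-1}\|_{V \to V} \le (1-\rho)^{-1}$. Hence $\mathcal M = \mathcal L(\id_V - \mathcal E)$ is a composition of invertible operators, so it is invertible with $\mathcal M^{-1} = (\id_V - \mathcal E)^{-1}\mathcal L^{-1}$; taking operator norms yields the first estimate $\|\mathcal M^{-1}\|_{W \to V} \le \|\mathcal L^{-1}\|_{W \to V}/(1-\rho)$.

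Third, for the difference estimate I would write $\mathcal M^{-1} - \mathcal L^{-1} = \bigl[(\id_V - \mathcal E)^{-1} - \id_V\bigr]\mathcal L^{-1}$ and use the telescoping identity $(\id_V - \mathcal E)^{-1} - \id_V = (\id_V - \mathcal E)^{-1}\mathcal E$ (equivalently, the tail $\sum_{n \ge 1}\mathcal E^{n}$). Taking norms and using $\|\mathcal E\|_{V\to V}\le\rho$ together with the bound on $\|(\id_V - \mathcal E)^{-1}\|_{V\to V}$ gives $\|\mathcal M^{-1} - \mathcal L^{-1}\|_{W \to V} \le \frac{\rho}{1-\rho}\|\mathcal L^{-1}\|_{W \to V}$, as claimed.

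This argument is essentially mechanical, so there is no real obstacle; the only point requiring a little care is the bookkeeping of domains and codomains — keeping $\mathcal E$ as an operator \emph{on $V$} (so its powers are defined and the Neumann series lives in $L(V,V)$), and tracking that the factor $\|\mathcal L^{-1}\|$ entering $\rho$ is the $W\to V$ norm while the perturbation is measured in the $V\to W$ norm. One could equally factor on the other side as $\mathcal M = \bigl(\id_W - (\mathcal L - \mathcal M)\mathcal L^{-1}\bigr)\mathcal L$, running the Neumann series in $L(W,W)$; the resulting bounds are identical.
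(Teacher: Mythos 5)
Your proof is correct and is precisely the standard Neumann-series (Banach perturbation) argument; the paper does not spell out a proof of this theorem but simply cites a standard reference (Atkinson), where this is the argument given. The one subtle point --- keeping $\mathcal E = \mathcal L^{-1}(\mathcal L - \mathcal M)$ as an operator on $V$ so that the geometric series makes sense, while $\rho$ mixes the $W\to V$ and $V\to W$ norms --- is exactly the point you flagged, and you handled it correctly.
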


Thus, in the context of the previous theorem, if $\mathcal L u = f$ and $\mathcal M v = f$, we have
\begin{align}\label{eq:diffO}
    \|u - v\|_{V} = O(\|\mathcal L -\mathcal M \|_{V \to W} \|f\|_W).
\end{align}
But one can do much better if one is considering operator equations
\begin{align*}
    (\id + \mathcal K) u = f, \quad (\id + \mathcal P_n \mathcal K) u_n = \mathcal P_n, \quad u_n \in \mathrm{ran}\, \mathcal P_Nf,
\end{align*}
for a projector $\mathcal P_n$.

\begin{theorem}\label{t:proj}
Suppose that $\id + \mathcal K \in L(V)$ is invertible.  Suppose $\| (\id - \mathcal P_n) \mathcal K\|_{V} \to 0$.  Then for $n$ sufficiently large $\id + \mathcal P_n \mathcal K$ is invertible on $\mathrm{ran}\, \mathcal P_N$ and there exists $c,C > 0$ such that
\begin{align*}
    c \|u - \mathcal P_nu \|_V \leq \|u - u_n \|_V \leq  C \|u - \mathcal P_nu \|_V.
\end{align*}
\end{theorem}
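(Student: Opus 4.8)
The plan is to run the classical perturbation argument for projection (collocation/Galerkin) methods applied to second-kind operator equations; this is textbook material \cite{atkinson}, but I record the steps since the statement is invoked repeatedly above. \emph{Step 1: invertibility of $\id + \mathcal P_n\mathcal K$ with $n$-uniform bounds.} Write $\id + \mathcal P_n\mathcal K = (\id + \mathcal K) - (\id - \mathcal P_n)\mathcal K$. Since $\id + \mathcal K$ is invertible and $\|(\id - \mathcal P_n)\mathcal K\|_V \to 0$, Theorem~\ref{t:continuity} (with $\mathcal L = \id + \mathcal K$ and $\mathcal M = \id + \mathcal P_n\mathcal K$) shows that once $\|(\id - \mathcal P_n)\mathcal K\|_V < \frac12\|(\id + \mathcal K)^{-1}\|_V^{-1}$ — true for all large $n$ — the operator $\id + \mathcal P_n\mathcal K$ is invertible on $V$ with $\|(\id + \mathcal P_n\mathcal K)^{-1}\|_V \le M := 2\|(\id + \mathcal K)^{-1}\|_V$. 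To descend to $\mathrm{ran}\,\mathcal P_n$: the range of $\mathcal P_n\mathcal K$ lies in $\mathrm{ran}\,\mathcal P_n$, so $\id + \mathcal P_n\mathcal K$ maps $\mathrm{ran}\,\mathcal P_n$ into itself; injectivity on $V$ gives injectivity there, and for $w \in \mathrm{ran}\,\mathcal P_n$ the global preimage $v$ satisfies $v = w - \mathcal P_n\mathcal K v \in \mathrm{ran}\,\mathcal P_n$, so $\id + \mathcal P_n\mathcal K$ restricts to a bijection of $\mathrm{ran}\,\mathcal P_n$ whose inverse is the restriction of the global inverse (hence still bounded by $M$). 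In particular $u_n$ is well-defined.

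\emph{Step 2: the error identity and the two-sided estimate.} From $(\id + \mathcal K)u = f$ we have $\mathcal P_n f = \mathcal P_n u + \mathcal P_n\mathcal K u$; subtracting $(\id + \mathcal P_n\mathcal K)\mathcal P_n u = \mathcal P_n u + \mathcal P_n\mathcal K\mathcal P_n u$ from $(\id + \mathcal P_n\mathcal K)u_n = \mathcal P_n f$ gives $(\id + \mathcal P_n\mathcal K)(u_n - \mathcal P_n u) = \mathcal P_n\mathcal K(\id - \mathcal P_n)u$, hence
\[
u - u_n = (\id - \mathcal P_n)u - (\id + \mathcal P_n\mathcal K)^{-1}\mathcal P_n\mathcal K(\id - \mathcal P_n)u.
\]
For the upper bound, use $\mathcal P_n\mathcal K = \mathcal K - (\id - \mathcal P_n)\mathcal K$ to get $\|\mathcal P_n\mathcal K(\id - \mathcal P_n)u\|_V \le (\|\mathcal K\|_V + \|(\id - \mathcal P_n)\mathcal K\|_V)\|u - \mathcal P_n u\|_V \le (\|\mathcal K\|_V + 1)\|u - \mathcal P_n u\|_V$ for $n$ large, so $\|u - u_n\|_V \le \bigl(1 + M(\|\mathcal K\|_V + 1)\bigr)\|u - \mathcal P_n u\|_V$, which is the claim with $C := 1 + M(\|\mathcal K\|_V + 1)$. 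For the lower bound, $u_n \in \mathrm{ran}\,\mathcal P_n$ and $\mathcal P_n^2 = \mathcal P_n$ give $(\id - \mathcal P_n)u_n = 0$, so $(\id - \mathcal P_n)(u - u_n) = (\id - \mathcal P_n)u = u - \mathcal P_n u$, whence $\|u - \mathcal P_n u\|_V \le \|\id - \mathcal P_n\|_V\,\|u - u_n\|_V$; taking $c := (\sup_n\|\id - \mathcal P_n\|_V)^{-1}$ completes the argument.

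The lower-bound constant $c$ is positive precisely when $\sup_n\|\mathcal P_n\|_V < \infty$; this holds in every application here (the projections converge strongly, so Banach--Steinhaus applies), and I would either add it as a harmless standing hypothesis on the family $\{\mathcal P_n\}$ or simply let $c$ depend on $n$ — only the upper bound is used downstream. Beyond this, there is no genuine obstacle: the single point that deserves care is the invariant-subspace reduction to $\mathrm{ran}\,\mathcal P_n$ together with the $n$-uniform bound $M$ on $(\id + \mathcal P_n\mathcal K)^{-1}$, and both are immediate from Theorem~\ref{t:continuity} once $\|(\id - \mathcal P_n)\mathcal K\|_V$ is taken small enough.
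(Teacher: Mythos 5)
The paper does not prove Theorem~\ref{t:proj}; it is recorded in the final appendix purely as background, with the reader pointed to \cite{atkinson}, so there is no in-paper argument to compare against. Your proof is correct and is the standard perturbation argument: the invertibility and the $n$-uniform bound on $(\id + \mathcal P_n\mathcal K)^{-1}$ via Theorem~\ref{t:continuity}, the descent to $\ran \mathcal P_n$, and the error identity are all sound. The one issue you flag is genuine for the route you chose: deducing the lower bound from $(\id - \mathcal P_n)(u - u_n) = u - \mathcal P_n u$ does need $\sup_n \|\id - \mathcal P_n\|_V < \infty$, which is not among the stated hypotheses. But this extra assumption is avoidable. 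Subtracting $(\id + \mathcal P_n\mathcal K)u_n = \mathcal P_n f = \mathcal P_n u + \mathcal P_n\mathcal K u$ from $(\id + \mathcal P_n\mathcal K)u = u + \mathcal P_n\mathcal K u$ gives the cleaner identity
\begin{align*}
(\id + \mathcal P_n\mathcal K)(u - u_n) = u - \mathcal P_n u,
\end{align*}
from which both inequalities drop out at once: applying $(\id + \mathcal P_n\mathcal K)^{-1}$ gives $\|u - u_n\|_V \le M \|u - \mathcal P_n u\|_V$, and taking norms directly gives $\|u - \mathcal P_n u\|_V \le \|\id + \mathcal P_n\mathcal K\|_V \,\|u - u_n\|_V \le (2 + \|\mathcal K\|_V)\|u - u_n\|_V$ for $n$ large, since $\|\mathcal P_n\mathcal K\|_V \le \|\mathcal K\|_V + \|(\id - \mathcal P_n)\mathcal K\|_V$. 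Thus $c := (2 + \|\mathcal K\|_V)^{-1}$ works without any assumption on $\{\mathcal P_n\}$ beyond what is stated, which tidies up the last paragraph of your write-up; it also bypasses the intermediate manipulation of $\mathcal P_n\mathcal K(\id - \mathcal P_n)u$ entirely.
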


\bibliographystyle{abbrv}
\bibliography{library,other}  

\end{document}